\let\m\mathbb
\let\c\mathcal
\let\r\textnormal
\let\i\textit
\theoremstyle{plain}
 \newtheorem{thm}{Theorem}[section]
 \newtheorem{lem}[thm]{Lemma}
  \newtheorem{prop}[thm]{Proposition}
   \newtheorem{cor}[thm]{Corollary}
\theoremstyle{definition}
 \newtheorem{eg}[thm]{Example}
  \newtheorem{rmk}[thm]{Remark}
\theoremstyle{remark}
\title{On the Cohomological Hall Algebra of a Character Variety}
\author{Vivek Mistry}
\address{School of Mathematics, University of Edinburgh, Edinburgh, United Kingdom}
\email{s1829507@ed.ac.uk}
\begin{document}

\maketitle

\begin{abstract}
    A multiplication on the 2D cohomological Hall algebra (CoHA) of the variety of commuting matrices was described by Schiffman and Vasserot. This construction can be generalised to other varieties that exist as the zero-locus of a function on a smooth ambient variety. On the 2D CoHA of the character variety of the fundamental group of a genus $g$ Riemann surface, we compare the multiplication induced by the standard presentation and that of a brane tiling presentation.
\end{abstract}

\tableofcontents

\section{Introduction}

In \cite{sv} Schiffman and Vasserot defined a multiplication on the vector space $\bigoplus_{n \in \m N} \r{H}_{\r{BM}}^{G_n}(C_n, \m C)$ of $G_n= \r{GL}_n(\m C)$-equivariant Borel-Moore homology (or equivalently the dual of the the $G_n$-equivariant compactly supported cohomology) of the commuting variety $C_n = \big\{ (A,B) \in \r{Mat}_{n \times n}(\m C)^2 \,\, \big| \,\, [A,B]=0 \big\}$, giving rise to a so-called 2D cohomological Hall algebra (CoHA). We have an algebra homomorphism $\widehat{\mu}: \m C[x] \rightarrow \m C \langle a,b \rangle$ given by
$$x \longmapsto [a,b]= ab - ba$$
which induces the map of varieties $\mu_n: \r{Mat}_{n \times n}(\m C)^2 \rightarrow \r{Mat}_{n \times n}(\m C)$ that sends
$$(A,B) \longmapsto [A,B] =  AB - BA.$$
The commuting variety can be written as the zero-locus $C_n = \mu_n^{-1}(0)$ of this function on the smooth ambient variety $\r{Mat}_n(\m C)^2$ induced from a homomorphism of smooth finitely generated $\m C$-algebras. This construction can be generalised to the (equivariant) compactly supported cohomology of any such variety that can be written in a similar way, as we will explain in Section 3.1.

Specifically we are interested in the 2D CoHA of the character variety $\r{Rep}_n(\m C[\pi_1(\Sigma_g)])$ of the fundamental group of a Riemann surface $\Sigma_g$ of genus $g$. This has the natural or \emph{standard presentation} as the zero-locus of the $\r{GL}_n(\m C)$-equivariant function $\lambda_n : \r{GL}_n^{2g}(\m C) \rightarrow \r{Mat}_{n \times n}(\m C)$ given by
$$(A_i,B_i)_{i=1}^g \longmapsto \prod_{i=1}^g A_iB_iA_i^{-1}B_i^{-1}-\r{Id}_n$$
which is induced from the algebra homomorphism $\widehat{\lambda}: \m C[x] \rightarrow \m C \langle a_1^{\pm 1}, b_1^{\pm 1}, \ldots, a_g^{\pm 1}, b_g^{\pm 1} \rangle$ that sends
$$x \longmapsto \prod_{i=1}^g a_i b_i a_i^{-1} b_i^{-1} - 1.$$
This gives rise to a particular 2D CoHA structure on the vector space $\bigoplus_{n \in \m N} \r{H}_c\big(\r{Rep}_n(\m C[\pi_1(\Sigma_g)]), \m Q\big)^\vee$.

However, there is an alternative way to express the character variety as the zero-locus of a function induced from an algebra homomorphism, via brane tilings and Jacobi algebras. As explored in \cite{dav4} and \cite{dav1}, taking a brane tiling $\Delta$ of a Riemann surface $\Sigma_g$ we can obtain a quiver and potential $(Q_\Delta, W_\Delta)$ such that there exists a cut $E$ for $W_\Delta$ and an isomorphism of algebras
$$\r{Jac}(\widetilde{Q}_\Delta, W_\Delta, E) \cong \r{Mat}_r(\m C[\pi_1(\Sigma_g)])$$
for the localised 2D Jacobi algebra of $(Q_\Delta, W_\Delta, E)$, where $r= |Q_{\Delta, 0}|$ (see [\cite{dav1}, Proposition 5.4]). Hence we have isomorphisms
\begin{align*}
    \r{Rep}_{nr}(\r{Jac}(\widetilde{Q}_\Delta, W_\Delta, E)) &\cong \r{Rep}_{nr}(\r{Mat}_r(\m C[\pi_1(\Sigma_g)]))\\
    &\cong \r{Rep}_n(\m C[\pi_1(\Sigma_g)]).
\end{align*}
From standard results of Jacobi algebras we obtain an algebra homomorphism\\
${\m C \langle x_e \,:\, e \in E \rangle \rightarrow \m C (\widetilde{Q}_\Delta \setminus E)}$ given by
$$x_e \longmapsto \frac{\partial W_\Delta}{\partial e}$$
which induces a $\r{GL}_n^r(\m C)$-equivariant function
$$\r{Rep}_{nr}(\widetilde{Q}_\Delta \setminus E) \rightarrow \r{Mat}_{nr \times nr}(\m C)^{|E|}$$
whose zero-locus is $\r{Rep}_{nr}(\r{Jac}(\widetilde{Q}_\Delta, W_\Delta, E))$. This induces another 2D CoHA structure on the cohomology $\bigoplus_{n \in \m N} \r{H}_c\big(\r{Rep}_n(\m C[\pi_1(\Sigma_g)]), \m Q\big)^\vee$ and therefore the natural question is to ask if in fact these two 2D CoHA structures are the same?

In this paper we shall prove an affirmative answer to this question

\begin{thm}\label{thm1.1}
The 2D CoHA
$$\bigoplus_{n \in \m N} \r{H}_c\big(\r{Rep}_n(\m C[\pi_1(\Sigma_g)]), \m Q\big)^\vee$$
is isomorphic as an algebra to the 2D CoHA
$$\bigoplus_{n \in \m N} \r{H}_c\big(\r{Rep}_{nr}(\r{Jac}(\widetilde{Q}_\Delta, W_\Delta, E)), \m Q\big)^\vee$$
under the isomorphism induced by $\r{Jac}(\widetilde{Q}_\Delta, W_\Delta, E) \cong \r{Mat}_r(\m C[\pi_1(\Sigma_g)])$ from [\cite{dav1}, Proposition 5.4].
\end{thm}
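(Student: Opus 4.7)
Both multiplications on $\bigoplus_n \r{H}_c(\r{Rep}_n(\m C[\pi_1(\Sigma_g)]), \m Q)^\vee$ arise from the general zero-locus construction of Section 3.1, applied to two different algebra homomorphisms that cut out the same variety. The Morita equivalence implicit in $\r{Jac}(\widetilde{Q}_\Delta, W_\Delta, E) \cong \r{Mat}_r(\m C[\pi_1(\Sigma_g)])$ identifies the underlying graded vector spaces on the two sides of the theorem; the task is to show that this identification intertwines the two multiplications. The plan is to prove a strong enough invariance property of the 2D CoHA construction that it simultaneously subsumes both presentations.

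\textbf{Key steps.} First, I would use the Morita equivalence to rephrase the brane-tiling CoHA as a CoHA on $\bigoplus_n \r{H}_c(\r{Rep}_n(\m C[\pi_1(\Sigma_g)]), \m Q)^\vee$ coming from a presentation of $\m C[\pi_1(\Sigma_g)]$ itself, where the ambient smooth variety is the edge-representation space (with the dimension shift $n \mapsto nr$). This reduces the question to comparing two presentations of the \emph{same} algebra. Second, I would prove an invariance lemma: given two presentations $\phi_i : A_i \to B$ ($i=1,2$) and a smooth $\r{GL}_n$-equivariant morphism between the induced ambient varieties intertwining the two relation maps, the two resulting CoHA multiplications coincide. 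The proof would proceed by a Thom-isomorphism argument on the Hecke correspondence appearing in the multiplication, showing that the Gysin class entering the construction transforms in exactly the way needed to absorb the smooth factor. Third, I would construct an explicit chain of such smooth compatibilities linking the standard presentation to the Morita-reduced brane-tiling presentation, using the combinatorics of the brane tiling $\Delta$ to rewrite the single commutator relation $\prod_i [a_i, b_i] - 1$ as the collection of partial-derivative relations $\partial W_\Delta / \partial e$, at the cost of introducing auxiliary smooth factors.

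\textbf{Main obstacle.} The hardest part will be building the bridge in the third step. The two presentations differ in nearly every structural feature: the number of generators, the number of relations, and crucially the fact that the standard presentation uses \emph{invertible} generators (ambient $\r{GL}_n^{2g}$) while the brane-tiling presentation uses ordinary path generators (ambient a product of copies of $\r{Mat}_{nr}$) subject to a localisation at the cut edges. Matching them will require both eliminating and introducing generators and relations in a smooth-equivariant manner, while keeping track at each step of the Gysin class that governs the CoHA multiplication. I expect the cleanest framework is to work throughout with dg- or derived presentations, in which the operations of ``adding a free generator with its defining relation'' and of Morita reduction can both be handled uniformly as quasi-isomorphisms; translating such a derived comparison back into the setting of classical zero-loci on smooth ambients is likely where most of the proof's technical content resides.
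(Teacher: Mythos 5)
Your outline does capture the broad shape of the paper's argument---first reduce the matrix algebra to $\m C[\pi_1(\Sigma_g)]$ itself (the paper does this concretely by contracting a maximal tree $T \subset Q_\Delta \setminus E$ to get a one-vertex quiver $Q$ with $\r{Jac}(\widetilde{Q},W,E) \cong \m C[\pi_1(\Sigma_g)]$), then prove an invariance statement for the CoHA pushforward under enlarging the ambient data by a smooth factor, then build a combinatorial bridge between the two presentations. But as formulated your two key steps have genuine gaps. Your invariance lemma presupposes a smooth equivariant morphism between the ambient varieties that \emph{intertwines} the two relation maps; no such morphism exists here. In fact \Cref{eg5.3} computes that for the natural comparison maps the relevant square ($f''\circ\varphi'_{m,n}$ against $\psi'_{m,n}\circ f$) does not even commute, and cannot be repaired by an automorphism of $Y'_{m,n}$: one must first enlarge the target by the affine factor $\r{Mat}_{m \times n}^{2g+r}$ recording the off-diagonal blocks of the representation, and then construct an explicit automorphism $\delta$ (and a twist $\gamma$ of $M_{m,n}$) making the square Cartesian up to an isomorphism with $\delta|_Z = \r{id}_Z$; this is the content of \Cref{prop1.08} and \Cref{prop1.09} and is where most of the technical work lies. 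The paper's substitute for your lemma (\Cref{cor1.06}, resting on \Cref{prop1.003}) also needs exactly these hypotheses---inclusions that are sections of projections, squares Cartesian up to an isomorphism trivial on the zero locus---so ``a smooth morphism intertwining the relation maps'' is not an adequate input, and the Thom/Gysin heuristic does not by itself identify the two counit morphisms being compared.

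Second, your third step treats the passage from the relations $\partial W/\partial e$ to the single relation $\prod_i x_iy_ix_i^{-1}y_i^{-1}-1$ as pure brane-tiling combinatorics, but it needs group-theoretic input your sketch omits. After eliminating $r = |E|-1$ arrows using a maximal tree in the tiling extending the dimer (\Cref{lem3.412}), the one leftover relation $\tau(\partial W/\partial e_0)$ is only known to generate the same two-sided ideal as $\lambda-1$; to proceed one needs the free-group fact that equal normal closures force conjugacy up to inversion (\Cref{lem3.411}), the count $|Q'_1| = 2g$ from one-relator group theory, and Nielsen equivalence of generating sets of $\pi_1(\Sigma_g)$ to produce the free-group automorphism matching the surviving arrows with the standard generators. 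The resulting conjugator $u$ and the possible inversion $c=-1$ must then be absorbed into the automorphism $\delta$ above, which is done by explicit matrix computation in \Cref{prop1.09}. Finally, your proposal to pass to dg/derived presentations defers rather than resolves the difficulty, as you yourself note: the theorem concerns the specific classical zero-locus construction of Section 3.1, and the comparison has to be carried out at that level.
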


The utility of this result is that it allows one to use powerful theorems already developed for CoHAs of quivers and Jacobi algebras, such as dimensional reduction from \cite{dav3} and the PBW isomorphism from \cite{dm}, in the character variety setting allowing for new avenues of study of these cohomological objects.

\subsection{Notation}
We fix some notation of common spaces and objects used throughout this paper. We shall always work over the field of complex numbers $\m C$. We denote by
\begin{align*}
    & \r{Mat}_{m \times n} := \r{Mat}_{m \times n}(\m C) && \r{the space of all $m \times n$-matrices}\\
    & \r{Mat}_n := \r{Mat}_{n \times n}(\m C) && \r{the space of all $n \times n$ square matrices}\\
    & \r{Mat}_{m,n} \subset \r{Mat}_{m+n} && \r{the space of all $(m+n)$ square matrices}\\
    & && \,\r{whose lower-left $n \times m$-block is 0}\\
    & \r{GL}_n := \r{GL}_n(\m C) && \r{the space of all $n \times n$ invertible matrices}\\
    & \r{GL}_{m,n} \subset \r{GL}_{m+n} && \r{the space of all $(m+n)$ invertible matrices}\\
    & && \, \r{whose lower-left $n \times m$-block is 0}.
\end{align*}
Note that $\r{GL}_{m,n}$ naturally acts on $\r{Mat}_{m,n}$ by conjugation via the inclusions $\r{GL}_{m,n} \subset \r{GL}_{m+n}$ and $\r{Mat}_{m,n} \subset \r{Mat}_{m+n}$.

\subsection*{Acknowledgements}
I would like to thank my supervisor Ben Davison for many helpful discussions and ideas and his support while writing this paper. This research was supported by the Royal Society studentship RGF\textbackslash R1\textbackslash 180093.

\section{Quivers, Jacobi algebras, and brane tilings}

\subsection{2D Jacobi algebras}
Let $Q=(Q_0, Q_1)$ be a quiver with potential $W \in \m CQ/[\m CQ, \m CQ]$. For $n=(n_i) \in \m N^{|Q_0|}$ let
$$M_n(Q) = \bigoplus_{a: i \rightarrow j \in Q_1} \r{Hom}_{\m C}(\m C^{n_i}, \m C^{n_j})$$
denote the space of $n$-dimensional representations of the path algebra $\m C Q$ and let
$$\r{Rep}_n(\m CQ) \cong M_n(Q)/G_n$$
denote the stack of $n$-dimensional representations of $\m CQ$, where $G_n = \prod_{i \in Q_0} \r{GL}_{n_i}$ acts on $M_n(Q)$ via simultaneous conjugation. For $a \in Q_1$ define $\partial W/\partial a \in \m CQ$ by taking all the terms in $W$ containing the arrow $a$, cyclically permuting $a$ to the front of each term and then deleting it. The \i{Jacobi algebra} of the $(Q,W)$ is the quotient algebra
$$\r{Jac}(Q,W) = \m CQ/I_W$$
where $I_W$ is the ideal $\big(\partial W/ \partial a \,\,\, | \,\, a \in Q_1 \big)$. Analogously we define $M_n(\r{Jac}(Q,W))$ as the space of $n$-dimensional representations of $\r{Jac}(Q,W)$ and $\r{Rep}_n(\r{Jac}(Q,W))$ as the stack of $n$-dimensional representations of $\r{Jac}(Q,W)$. Define $G_n$-equivariant maps of varieties 
$$\r{Tr}(W)_n: M_n(Q) \rightarrow \m C$$
by
$$(R_a)_{a \in Q_1} \longmapsto \r{Tr}(W(R_a))$$
and
$$(\partial W/\partial a)_{a \in Q_1}: M_n(Q) \rightarrow M_n(Q^{\r{op}})$$
by
$$(R_a)_{a \in Q_1} \longmapsto \left(\frac{\partial W}{\partial b}(R_a) \right)_{b \in Q_1}.$$
Then we have that
$$\r{Rep}_n(\r{Jac}(Q,W)) \cong \r{crit}((\r{Tr}(W)_n))/G_n$$
or alternatively
$$\r{Rep}_n(\r{Jac}(Q,W)) \cong (\partial W/\partial a)_{a \in Q_1}^{-1}(0)/G_n$$
where $0 \in M_n(Q^{\r{op}})$ is the representation that assigns the zero matrix to each arrow $a \in Q^{\r{op}}_1$.

A \i{cut} $E \subset Q_1$ for the potential $W$ is a choice of arrows such that $W$ is homogeneous of degree 1 with respect to the chosen arrows. Given the triple $(Q,W,E)$ let $\widehat{I}_{W,E}$ denote the ideal 
$$\big(\partial W/ \partial e \,,\, e \,\, \big| \,\, e \in E \big) \lhd \m CQ$$
and let $I_{W,E}$ denote the ideal
$$\big(\partial W/ \partial e \,\,\, \big| \,\, e \in E \big) \lhd \m C(Q \setminus E)$$
which makes sense since $\partial W/\partial e$ is a sum of paths in $Q \setminus E$ for any $e \in E$ due to the definition of a cut. We define the \i{2D Jacobi algebra} as the quotient algebra
\begin{align*}
    \r{Jac}(Q,W,E) &= \m CQ/\widehat{I}_{W,E}.
\end{align*}
We also write
$$\r{Jac}(Q \setminus E, W, E) = \m C(Q \setminus E)/I_{W,E}$$
then clearly
$$\r{Jac}(Q \setminus E, W, E) \cong \r{Jac}(Q,W,E)$$
as alternative notation we will use later on. Define the $G_n$-equivariant map
$$\partial W/\partial E: M_n(Q \setminus E) \rightarrow \prod_{e \in E} \r{Mat}_{n_{s(e)} \times n_{t(e)}}$$
by
$$(R_a)_{a \in Q_1 \setminus E} \longmapsto  \left(\frac{\partial W}{ \partial e}(R_a)\right)_{e \in E}$$
where $G_n$ acts on $\prod_{e \in E} \r{Mat}_{n_{s(e)} \times n_{t(e)}}$ via simultaneous conjugation. Then
$$\r{Rep}_n(\r{Jac}(Q,W,E)) \cong (\partial W/ \partial E)^{-1}(0)/G_n.$$

Let $\m C \widetilde{Q}$ denote the \i{localised path algebra of $Q$} in which we add formal inverses $a^{-1}$ to the algebra $\m C Q$ for each $a \in Q_1$ with the property that $a^{-1} a = e_{s(a)}$ and $a a^{-1} = e_{t(a)}$. We get corresponding notions for the localised Jacobi algebra denoted by $\r{Jac}(\widetilde{Q},W)$, and for the localised 2D Jacobi algebra denoted by $\r{Jac}(\widetilde{Q},W,E)$ for which we add inverses $a^{-1}$ for $a \in Q_1 \setminus E$.

\subsection{Brane tilings}
Let $\Sigma_g$ be a Riemann surface of genus $g$. A \i{brane tiling} $\Delta$ of $\Sigma_g$ is an embedding $\Gamma \hookrightarrow \Sigma_g$ of a bipartite graph $\Gamma$ such that each connected component of $\Sigma_g \setminus \Gamma$ is simply connected. We choose a partition of the vertex set of $\Gamma$ into two disjoint subsets of black and white vertices such that the edges in $\Gamma$ only go between a black vertex and a white vertex.

From a brane tiling we can obtain a quiver with potential $(Q_\Delta, W_\Delta)$. The underlying graph of $Q_\Delta$ is the dual graph to $\Gamma$ in $\Sigma_g$, and it is directed so that arrows in $Q_\Delta$ go clockwise around a white vertex and anticlockwise around a black vertex. For a vertex $v \in \Gamma$ let $c_v$ denote the minimal cycle in $Q_\Delta$ that encircles $v$, i.e. $c_v$ is the cycle consisting of all the arrows which are dual to the edges that come out of $v$. Then we take the potential to be
$$W_\Delta = \sum_{v \, \r{white}} c_v - \sum_{u \, \r{black}} c_u.$$
A \i{dimer} for a brane tiling $\Delta$ is selection of edges in the tiling such that every vertex is attached to exactly one edge in this selection. Note that dimers need not always exist, for example if the number of black and white vertices is not equal then we will not be able to find a dimer.

The idea behind using brane tilings in the context of fundamental groups of Riemann surfaces is as follows (from \cite{dav1}). If the arrow $a \in Q_{\Delta, 1}$ is dual to the edge between the white vertex $v$ and black vertex $u$ in $\Gamma$ then $a \cdot \partial W_\Delta/\partial a = c_v - c_u.$ This tells us that in $\r{Jac}(Q_\Delta, W_\Delta)$ we can identify paths that are homotopic via an edge in the brane tiling ie. the edge in $\Delta$ ``spans" the homotopy in $\Sigma_g$. This allows us to link the algebra over the fundamental group of the Riemann surface $\Sigma_g$ with a Jacobi algebra. However note that not all homotopic paths in $\m C Q_\Delta$ are identified in $\r{Jac}(Q_\Delta, W_\Delta)$, e.g. the minimal cycles $c_v$ are null-homotopic but are not equal to the constant paths in $\r{Jac}(Q_\Delta, W_\Delta)$.

\begin{eg}\label{eg:1.01}
For each genus $g$ we have the following brane tiling from [\cite{dav4} Section 7] containing four tiles. We have drawn this tiling for the genus 2 case in \Cref{pic:contil}. Informally one can think of it as being constructed by taking the surface and first cutting it in half down the middle through all the holes, leaving you with two cylinders with $g+1$ holes in each (including the two end holes). Then cut each of those cylinders in half, once again cutting through the holes, leaving you with two tiles per cylinder.
\begin{figure}[h]
    \centering
    \includegraphics[scale=0.5]{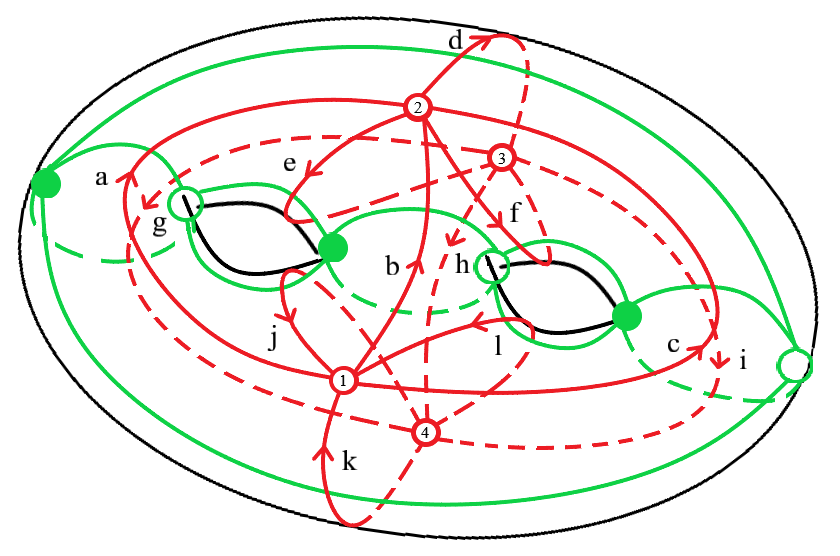}
    \caption{The tiling from \cite{dav4} for a genus 2 surface. The tiling $\Delta$ is in green and the dual quiver $Q_\Delta$ is in red.}
    \label{pic:contil}
\end{figure}
We get that $Q_\Delta$ is the quiver
\[
\begin{tikzcd}[row sep=7em, column sep=8em]
1 \arrow[r, shift left=0.5ex, bend left= 25,"a"] \arrow[r,"b"] \arrow[r, shift right=0.5ex, bend right=25,"c"]
& 2 \arrow[d, shift left=0.5ex, bend left= 25,"d"] \arrow[d,"e"] \arrow[d, shift right=0.5ex, bend right=25,"f"]\\
4 \arrow[u, shift left=0.5ex, bend left= 25,"j"] \arrow[u,"k"] \arrow[u, shift right=0.5ex, bend right=25,"l"] & 3 \arrow[l, shift left=0.5ex, bend left= 25,"g"] \arrow[l,"h"] \arrow[l, shift right=0.5ex, bend right=25,"i"]
\end{tikzcd}
\]
and the potential is
$$W_\Delta = jgea + lhfb + kidc - kgda - jheb - lifc.$$
Looking at some of the relations we get from this potential, we have for example
$$\frac{\partial W_\Delta}{\partial a} = jge-kgd \,, \quad \frac{\partial W_\Delta}{\partial f} = blh-cli \,, \quad \frac{\partial W_\Delta}{\partial j} = gea-heb$$
and we can see from \Cref{pic:contil} that indeed the path $jge$ is homotopic to $kgd$, $blh$ is homotopic to $cli$, and $gea$ is homotopic to $heb$. 
\end{eg}

To rectify the issue of homotopic paths not being identified in $\r{Jac}(Q_\Delta, W_\Delta)$ we use a cut. Since each term in $W_\Delta$ is the minimum cycle $c_v$ around a vertex $v \in \Delta$ we can see that a choice of a dimer for $\Delta$ corresponds to a choice of a cut for $(Q_\Delta, W_\Delta)$ whereby we just take the duals of the edges in the dimer to be the cut; and indeed this choice makes $W_\Delta$ homogeneous of degree 1 due to the fact that every vertex is contained in the dimer exactly once.

It turns out that issues only arise from the minimal cycles $c_v$ not being trivial paths in the Jacobi algebra and so by taking a selection of arrows such that $W_\Delta$ is homogeneous of degree 1 in those arrows, when considering the 2D Jacobi algebra we end up removing the minimal cycles. This entirely gets rid of the problem of having non-trivial paths in the Jacobi algebra which are not null-homotopic when viewed as paths in $\Sigma_g$. More precisely we have the following result from \cite{dav1}.

\begin{prop}[\cite{dav1} Proposition 5.4]\label{prop1.02}
Let $\Delta$ be a brane tiling of the Riemann surface $\Sigma_g$ with dual quiver $Q_\Delta$ and potential $W_\Delta$. Take a cut $E$ for $W_\Delta$. Then
$$\r{Jac}(\widetilde{Q}_\Delta, W_\Delta, E) \cong \r{Mat}_r(\m C[\pi_1(\Sigma_g)])$$
where $r= |Q_{\Delta, 0}|$.
\end{prop}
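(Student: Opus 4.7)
The plan is to build an explicit pair of mutually inverse algebra homomorphisms, using the brane tiling to identify both sides with an algebra of paths between tiles modulo homotopy in $\Sigma_g$.

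First I would construct $\phi: \r{Jac}(\widetilde{Q}_\Delta \setminus E, W_\Delta, E) \to \r{Mat}_r(\m C[\pi_1(\Sigma_g)])$ geometrically, identifying the target with the fundamental groupoid algebra $\m C[\Pi_1(\Sigma_g; p_1, \ldots, p_r)]$, where $p_i$ is a chosen basepoint in the interior of tile $i \in Q_{\Delta, 0}$, via fixed reference paths $\gamma_i$ from a global basepoint $p_0$ to $p_i$. For each arrow $a: i \to j$ in $Q_\Delta \setminus E$, pick a transverse path $a^{\vee}$ from $p_i$ to $p_j$ crossing the tiling edge dual to $a$, and set
$$\phi(a) = E_{ji} \otimes [\gamma_j^{-1} \cdot a^{\vee} \cdot \gamma_i];$$
extend multiplicatively to paths in $Q_\Delta \setminus E$ and to the localisation by sending each $a^{-1}$ to the evident inverse.

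To see $\phi$ descends to the Jacobi quotient, check that for each $e \in E$ dual to the tiling edge joining white vertex $v$ and black vertex $u$ of $\Gamma$, the relator $\partial W_\Delta/\partial e = c_v^{(e)} - c_u^{(e)}$ maps to zero, where $c_w^{(e)}$ is the path from $t(e)$ to $s(e)$ obtained from the minimal cycle $c_w$ by cyclically rotating $e$ to the front and deleting it. Geometrically, both $c_v^{(e)}$ and $c_u^{(e)}$ become null-homotopic loops when concatenated with the transverse crossing $e^{\vee}$ (bounding the small disk around $v$ or $u$ respectively in the CW structure of $\Sigma_g$). Hence $[c_v^{(e)}] = [(e^{\vee})^{-1}] = [c_u^{(e)}]$ in $\Pi_1(\Sigma_g)$, so $\phi(c_v^{(e)}) = \phi(c_u^{(e)})$ and $\phi(\partial W_\Delta/\partial e) = 0$.

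For the inverse $\psi$, exploit the CW presentation of $\pi_1(\Sigma_g)$ given by the brane tiling: choose a spanning tree $T \subset Q_\Delta \setminus E$ of the underlying graph (possible since removing the dimer-dual edges $E$ preserves connectedness), so that $\pi_1(\Sigma_g, p_0)$ is generated by loops $\ell_a$ for $a \in Q_{\Delta,1} \setminus T$ with relators the $c_v$ for $v \in \Gamma_0$. For each cut arrow $e \in E$ the relation $c_v = 1$ in $\pi_1$ yields the identity $e = (c_v^{(e)})^{-1}$, a well-defined element of the localised algebra on $Q_\Delta \setminus E$. Define $\psi$ on $E_{ji} \otimes \ell_a$ by $\gamma_{T,j}^{-1} \cdot a \cdot \gamma_{T,i}$ for non-cut $a$, and by the same recipe after the substitution $e \mapsto (c_v^{(e)})^{-1}$ for $a = e \in E$. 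The Jacobi relation $c_v^{(e)} = c_u^{(e)}$ in the 2D Jacobi algebra ensures that the choice of $v$ versus $u$ in the substitution is irrelevant, so $\psi$ is well defined.

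The main obstacle is the simultaneous verification that $\psi$ respects the $\pi_1$-relations and that $\psi, \phi$ are two-sided inverses: one needs each $\pi_1$-relator $c_v$, rewritten via the tree and via the substitution for its unique cut arrow, to become trivial in the localised 2D Jacobi algebra, and one needs the analogous cancellation in the reverse composition. The essential combinatorial input is the bijection, provided by the dimer, between vertices of $\Gamma$ (generators of $\pi_1$-relations) and arrows in $E$ (generators of Jacobi relations); careful bookkeeping in the localisation -- tracking how formal inverses $a^{-1}$ participate in cancellations of subwords built from tree paths and arrow reversals -- matches each $c_v$-relator to exactly one $\partial W_\Delta / \partial e_v$-relator and closes the argument.
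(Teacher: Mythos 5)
The first thing to note is that the paper does not prove \Cref{prop1.02} at all: it is imported verbatim from [\cite{dav1}, Proposition 5.4], so there is no internal argument to compare yours against. Your fundamental-groupoid sketch is the natural direct proof, and it is consistent with how the paper later handles this isomorphism: \Cref{lem1.03} performs exactly your tree-path conjugation to strip off the matrix factor, and the proof of \Cref{lem3.412} records that the isomorphism of \cite{dav1} is induced by a group homomorphism coming from the embedding of the quiver in $\Sigma_g$, with composition of paths corresponding to concatenation of loops --- which is precisely your $\phi$.

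On the substance, the construction of $\phi$ and the check that it kills $\partial W_\Delta/\partial e$ are correct, but the closing paragraph needs tightening at exactly the point where you defer to ``bookkeeping''. The claimed ``bijection between vertices of $\Gamma$ and arrows in $E$'' is not a bijection: the dimer is a perfect matching, so each cut arrow $e$ is dual to an edge covering one white vertex $v$ and one black vertex $u$, and the correspondence vertex $\mapsto$ cut arrow is two-to-one. The CW presentation of $\pi_1(\Sigma_g)$ coming from the faces of $Q_\Delta$ has $2|E|$ relators $c_w$, while there are only $|E|$ relations $\partial W_\Delta/\partial e$. The count still closes, and for the reason you already identify for the $v$-versus-$u$ independence: for the dimer edge $\{v,u\}$ dual to $e$, the relator $c_v$ dies tautologically under your substitution $e \mapsto (c_v^{(e)})^{-1}$, while $c_u$ becomes $(c_v^{(e)})^{-1}c_u^{(e)}$, which is trivial in the localised 2D Jacobi algebra exactly because $\partial W_\Delta/\partial e = c_v^{(e)} - c_u^{(e)} = 0$. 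Writing this out is the entire content of the well-definedness of $\psi$; once done, the two-sided-inverse verification is routine provided the reference paths $\gamma_i$ used for $\phi$ are taken to be the transverse paths along the same maximal tree $T$ used for $\psi$ (with independent choices one only gets inverses up to conjugation by a unit). Two smaller loose ends: the asserted connectedness of $Q_\Delta \setminus E$ deserves a word (the dimer edges form a disjoint union of embedded closed arcs, and removing such arcs does not disconnect a connected surface, so generic paths between tiles cross only uncut edges); and the formula $\psi(E_{ji}\otimes \ell_a)=\gamma_{T,j}^{-1}\cdot a\cdot \gamma_{T,i}$ does not typecheck as written --- the lift of $\ell_a$ should be the arrow $a$ conjugated by the tree paths to and from $s(a)$ and $t(a)$, then placed in the $(j,i)$ slot via the tree paths to $i$ and $j$. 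With these repairs your outline is a complete and correct proof of the cited statement.
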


To tidy up the fact that we must take matrices over the fundamental group algebra in \Cref{prop1.02} consider a maximal tree $T \subset Q_\Delta \setminus E$. Let $Q$ denote the quiver with one vertex and arrow set indexed by elements in $Q_{\Delta,1} \setminus (E \cup T)$ (equivalently $Q$ is obtained by contracting the tree $T$ in the quiver $Q_\Delta \setminus E$). Then for all $i,j \in Q_{\Delta,0}$ let $t_{i,j}: i \rightarrow j$ be the unique path in the localised path algebra $\m C (\widetilde{Q_\Delta \setminus E})$ comprised solely from arrows in $T$ or their inverses. We can define an algebra homomorphism 
$$\sigma: \m C (\widetilde{Q_\Delta \setminus E}) \longrightarrow \r{Mat}_r(\m C \widetilde{Q})$$
by
\begin{align*}
    \r{const}_i &\longmapsto E_{i,i}(1)\\
    a: i \rightarrow j \in Q_{\Delta,1} \setminus (E \cup T) &\longmapsto E_{j,i}(a)\\
    t_{i,j} &\longmapsto E_{j,i}(1)
\end{align*}
where $\r{const}_i$ is the constant path in $\m C (\widetilde{Q_\Delta \setminus E})$ at the vertex $i \in Q_{\Delta,0}$, and $E_{i,j}(p)$ is the matrix with entries $p \in \m C \widetilde{Q}$ in the $(i,j)$-th place and zeroes everywhere else. Let $W$ be the potential on $Q_\Delta \setminus T$ obtained by removing all instances of the arrows that belong to $T$ from the cycles in $W_\Delta$. It is clear that $E$ remains as a cut for $(Q_\Delta \setminus T,\, W)$. Then $\sigma$ descends to a homomorphism
$$\overline{\sigma}: \r{Jac}(\widetilde{Q}_\Delta, W_\Delta, E) \longrightarrow \r{Mat}_r(\r{Jac}(\widetilde{Q}, W, E))$$
since $\sigma(\partial W_\Delta/\partial a) = E_{i,j}(\partial W/\partial a)$.

\begin{lem}\label{lem1.03}
The homomorphisms $\sigma$ and $\overline{\sigma}$ are isomorphisms.
\end{lem}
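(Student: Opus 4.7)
The plan is to construct an explicit two-sided inverse $\tau$ to $\sigma$ and then argue that $\sigma$ sends the Jacobian ideal $(\partial W_\Delta/\partial e : e \in E)$ onto $\r{Mat}_r$ of the Jacobian ideal $(\partial W/\partial e : e \in E)$, so that $\overline\sigma$ inherits the isomorphism. The underlying idea is that once the arrows of the spanning tree $T$ are formally inverted, $T$ supplies a canonical identification of the $r$ vertex idempotents of $\m C(\widetilde{Q_\Delta \setminus E})$ with matrix units of $\r{Mat}_r(\m C)$, and contracting $T$ exhibits $\m C(\widetilde{Q_\Delta \setminus E})$ as $\r{Mat}_r(\m C \widetilde{Q})$.

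I define $\tau: \r{Mat}_r(\m C \widetilde{Q}) \to \m C(\widetilde{Q_\Delta \setminus E})$ on the generating set by
$$E_{j,i}(1) \longmapsto t_{i,j}, \qquad E_{j,i}(a) \longmapsto t_{t,j} \cdot a \cdot t_{i,s}, \qquad E_{j,i}(a^{-1}) \longmapsto t_{s,j} \cdot a^{-1} \cdot t_{i,t},$$
where $a: s \to t$ is the arrow of $Q_\Delta \setminus (E \cup T)$ corresponding to the loop $a \in Q_1$; each image is a path from $i$ to $j$ in $\widetilde{Q_\Delta \setminus E}$. Well-definedness reduces to the matrix-unit relations, their mixed versions involving loops, and the loop-inversion relations, all of which telescope to tautologies via $t_{i,j} \cdot t_{k,i} = t_{k,j}$ (unique path in a tree), $t_{i,i}=e_i$, and $a a^{-1} = e_{t(a)}$, $a^{-1} a = e_{s(a)}$ in the localised path algebra. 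With $\tau$ in hand, $\sigma \tau = \r{id}$ and $\tau \sigma = \r{id}$ are one-line checks on generators; for instance $\sigma\tau(E_{j,i}(a)) = E_{j,t}(1) E_{t,s}(a) E_{s,i}(1) = E_{j,i}(a)$, and $\tau\sigma(a) = t_{j,j} \cdot a \cdot t_{i,i} = a$ for $a: i \to j$ in $Q_{\Delta,1} \setminus (E \cup T)$, with the case of constants and tree paths being immediate.

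To pass to the 2D Jacobi algebras, let $W$ be the potential on $Q_\Delta \setminus T$ obtained by deleting all occurrences of tree arrows from the cycles of $W_\Delta$. Since $E \cap T = \emptyset$, deletion of tree arrows commutes with $\partial/\partial e$, and expanding $\sigma$ monomial-by-monomial while using $\sigma(t_{k,l}) = E_{l,k}(1)$ yields
$$\sigma(\partial W_\Delta/\partial e) = E_{j,i}(\partial W/\partial e) \qquad \text{for } e : j \to i \text{ in } E.$$
Hence $\sigma$ sends $I_{W_\Delta, E}$ into the two-sided ideal of $\r{Mat}_r(\m C\widetilde{Q})$ generated by $\{E_{j_e, i_e}(\partial W/\partial e)\}_{e \in E}$, which coincides with $\r{Mat}_r(I_{W,E})$ because matrix units can reposition any entry through $E_{k,j_e}(1) \cdot E_{j_e, i_e}(\partial W/\partial e) \cdot E_{i_e, l}(1) = E_{k,l}(\partial W/\partial e)$. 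Since $\sigma$ is a bijection, its preimage of $\r{Mat}_r(I_{W,E})$ is exactly $I_{W_\Delta, E}$, so $\overline\sigma$ descends to an algebra isomorphism. The main technical obstacle is verifying multiplicativity of $\tau$: one must carefully track how the inserted tree paths telescope across products of generators in each case, but once the pattern is identified the verification is mechanical.
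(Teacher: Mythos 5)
Your proposal is correct and takes essentially the same approach as the paper: the paper likewise defines the explicit inverse $E_{i,j}(a) \mapsto t_{t(a),i}\,a\,t_{j,s(a)}$, extends it to paths by inserting telescoping tree paths, and uses the compatibility of $\sigma^{\pm 1}$ with the relations $\partial W_\Delta/\partial e \leftrightarrow E_{\bullet,\bullet}(\partial W/\partial e)$ to descend to the 2D Jacobi algebras. The only cosmetic difference is that you check the descent by showing $\sigma$ carries the ideal $I_{W_\Delta,E}$ onto $\r{Mat}_r(I_{W,E})$, whereas the paper checks it on the inverse map; these are equivalent.
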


\begin{proof}
We define a homomorphism $\sigma^{-1}: \r{Mat}_r(\m C \widetilde{Q}) \rightarrow \m C (\widetilde{Q_\Delta \setminus E})$ by
$$E_{i,j}(a) \longmapsto t_{t(a),i}\, a\, t_{j,s(a)}.$$
For a path $p = a_m \ldots a_1 \in \m C \widetilde{Q}$, $\sigma^{-1}$ will send $E_{i,j}(p)$ to 
$$t_{t(a_m),i}\, a_m\, t_{t(a_{m-1}),s(a_m)}\, a_{m-1}\, t_{t(a_{m-2}),s(a_{m-1})} \ldots t_{t(a_1),s(a_2)}\, a_1\, t_{j,s(a_1)}.$$
Hence $\sigma^{-1}$ will descend to the 2D Jacobi algebras since $\sigma^{-1}(E_{i,j}(\partial W/\partial a)) = t_{s(a),i}\,(\partial W_\Delta/\partial a) \, t_{j,t(a)}$. It is clear that $\sigma$ and $\sigma^{-1}$ are inverses.
\end{proof} 

In particular \Cref{lem1.03} and \Cref{prop1.02} imply that
\begin{align}
    \r{Jac}(\widetilde{Q}, W, E) \cong \m C[\pi_1(\Sigma_g)]. \label{eq:2.001}
\end{align}

\begin{eg}\label{eg:1.02}
Returning to \Cref{eg:1.01} we could take $E=\{a,b,c\}$ as a cut for the potential $W_\Delta$ (we could also take $\{d,e,f\}$ or $\{g,h,i\}$ or $\{j,k,l\}$ among many others). Using the cut $E$ we can choose $T=\{e,h,k\}$ for our maximal tree, then we have that the localised 2D Jacobi algebra of $(Q, W, E)$ is
$$\r{Jac}(\widetilde{Q}, W, E) = \frac{\m C \langle d^{\pm 1}, f^{\pm 1}, g^{\pm 1}, i^{\pm 1}, j^{\pm 1}, l^{\pm 1}\rangle}{(jg-gd,\, lf-j,\, id-lif)}.$$
Note
\begin{align*}
    & jg-gd = 0 \quad \Rightarrow \quad j = gdg^{-1}\\
    & id-lif = 0 \quad \Rightarrow \quad f = i^{-1}l^{-1}id.
\end{align*}
Therefore $lf-j = 0$ becomes
$$li^{-1}l^{-1}id-gdg^{-1} = 0$$
or equivalently
$$li^{-1}l^{-1}idgd^{-1}g^{-1} - 1 = 0$$
and hence
\begin{align*}
    \r{Jac}(\widetilde{Q}, W, E) &= \frac{\m C \langle d^{\pm 1}, f^{\pm 1}, g^{\pm 1}, i^{\pm 1}, j^{\pm 1}, l^{\pm 1}\rangle}{(j-gdg^{-1},\, li^{-1}l^{-1}idgd^{-1}g^{-1} - 1,\, f-i^{-1}l^{-1}id)}\\
    &= \frac{\m C \langle d^{\pm 1}, g^{\pm 1}, i^{\pm 1}, l^{\pm 1}\rangle}{(li^{-1}l^{-1}idgd^{-1}g^{-1} - 1)}\\
    & \cong \m C[\pi_1(\Sigma_2)].
\end{align*}
\end{eg}

\section{Cohomological Hall algebras}

\subsection{The 2D CoHA multiplication}
Building on Schiffman and Vasserot's work from \cite{sv}, and in the same vein as the appendix in \cite{rs}, we shall describe a version of the 2D CoHA construction for $\bigoplus_{n \in \m N} \r{H}_{c,G}(Z_n, \m Q)$ the equivariant compactly supported cohomology of spaces $Z_n$ which can be written as the zero-loci of a specific type of function $f_n:X_n \rightarrow \m A^m$ on some smooth ambient space $X_n$.

Let us first fix some notation for various morphisms and natural transformations of sheaves. Our main reference for this will be \cite{ks}. All functors will be derived unless otherwise stated and when underived functors are necessary the same notation will be used as the derived versions, but we shall make it clear from the context which we mean. 

So let $X$ and $Y$ be varieties over $\m C$ and let $\r{D}(X)$ and $\r{D}(Y)$ denote the derived categories of sheaves of $\m Q$-vector spaces on $X$ and $Y$ respectively. For a map $f: X \rightarrow Y$ write
\begin{align*}
    & \eta^f: \r{id}_{\r{D}(Y)} \rightarrow f_* f^*\\
    & \sigma^f : f^*f_* \rightarrow \r{id}_{\r{D}(X)}
\end{align*}
and
\begin{align*}
    & \nu^f: f_! f^! \rightarrow \r{id}_{\r{D}(Y)}\\ 
    & \theta^f : \r{id}_{\r{D}(X)} \rightarrow f^! f_!
\end{align*}
for the canonical unit-counit pairs for the adjunctions $f^* \dashv f_*$ and $f_! \dashv f^!$ respectively. Let
$$\kappa^f: f^* \otimes \omega_{X/Y} \rightarrow f^!$$
be the natural transformation (see [\cite{ks} equation (3.1.6)]) where $\omega_{X/Y} := f^! \m Q_Y$ is the dualising sheaf. For the Cartesian square
\[\begin{tikzcd}[column sep = 6em, row sep = 6em]
X' \arrow[r, "f'"] & Y'\\
X \arrow[u, "\varphi"] \arrow[r, "f"] & Y \arrow[u, "\psi"] 
\end{tikzcd}\]
let
\begin{align*}
\epsilon^{\psi,f'}: \psi^* f'_! \xrightarrow{\,\, \sim \,\,} f_! \varphi^*
\end{align*}
be the base-change natural isomorphism.

\begin{rmk}\label{rmk3.3}
[\cite{ks} Theorem 3.1.5] says that the upper-shriek functors are unique up to isomorphism. If $X$ and $Y$ are smooth and hence complex manifolds, there exists a canonical choice of $f^!$ such that
$$\omega_{X/Y} = f^! \m Q_{Y} = \m Q_{X}[2d]$$
(where $d= \r{dim}(f) =  \r{dim}(X) - \r{dim}(Y)$) and so $\kappa^{f}$ becomes the natural transformation
$$\kappa^{f} : f^{*}[2d] \longrightarrow f^{!}.$$
If the map $f$ itself is smooth then we also get that
$$\kappa^{f} : f^{*}[2d] \xrightarrow{\,\,=\,\,} f^{!}$$
is an equality by [\cite{ks} Proposition 3.3.2 (ii)].
\end{rmk}

\begin{rmk}\label{rmk3.2}
If $f$ is proper then $f_* = f_!$ and so for such maps we shall use $f_!$ to denote all pushforward functors. If $\varphi$ and $\psi$ are closed immersions (and hence proper) we have that the natural transformations $\sigma^\psi, \sigma^\varphi, \theta^\psi, \theta^\varphi$ are the identity. We also have an equality of natural isomorphisms
\begin{align}
    \epsilon^{\psi,f'} &= (\sigma^\psi)f_! \varphi^* \circ \psi^*f'_!(\eta^\varphi) \label{eq:1.0001}
\end{align}
because $\psi_*=\psi_!$ and $\varphi_*=\varphi_!$ (see [\cite{ks} Proposition 2.5.11]).
\end{rmk}

To begin the description of the 2D CoHA multiplication take smooth finitely generated $\m C$-algebras $A$ and $B$ and an algebra homomorphism $\widehat{f}:B \rightarrow A$. Fix presentations of $A$ and $B$
\begin{align}
    A &\cong \m C \langle a_1 \ldots a_k \rangle/I \label{eq:1.00015}\\
    B &\cong \m C \langle b_1 \ldots b_l \rangle/J \nonumber
\end{align}
and a vector space $V$ of dimension $n$, then consider the spaces
\begin{align*}
    M_n(A) &= \left\{(R_i) \in \bigoplus_{i=1}^k \r{End}(V) \,\, \Big| \,\, p(R_i) = 0 \,\,\,\, \r{for all} \,\, p \in I \right\}\\
    M_n(B) &= \left\{(S_i) \in \bigoplus_{i=1}^l \r{End}(V) \,\, \Big| \,\, q(S_i) = 0 \,\,\,\, \r{for all} \,\, q \in J \right\}
\end{align*}
of representations with underlying vector space $V$ of $A$ and $B$ respectively, along with their natural $\r{GL}(V)$-actions. Let $\r{Rep}_n(A)$ and $\r{Rep}_n(B)$ denote the stack of representations with underlying vector space $V$ of $A$ and $B$ respectively i.e.
\begin{align*}
    \r{Rep}_n(A) &\cong  M_n(A)/\r{GL}(V)\\
    \r{Rep}_n(B) &\cong  M_n(B)/\r{GL}(V).
\end{align*}
From now on take $V= \m C^n$, hence $M_n(A)$ and $M_n(B)$ will be spaces of matrices. We obtain natural maps $f_n : M_n(A) \rightarrow M_n(B)$ induced from $\widehat{f}$ and so let $Z_n=f_n^{-1}(0)$, where $0 \in M_n(B)$ is the representation for which every element of $B$ acts on $\m C^n$ as zero. 

We want to define a multiplication on the dual of the vector space of $\r{GL}_n$-equivariant compactly supported cohomology of $Z_n$, or equivalently on the dual of the compactly supported cohomology of the stack $Z_n/\r{GL}_n$
$$\bigoplus_{n \in \m N} \r{H}_{c, \r{GL}_n}(Z_n, \m Q)^\vee = \bigoplus_{n \in \m N} \r{H}_{c}(Z_n/\r{GL}_n, \m Q)^\vee.$$
Let $M_{m,n}(A)$ denote the space of short-exact sequences of representations of $A$
$$0 \rightarrow \rho' \rightarrow \rho \rightarrow \rho'' \rightarrow 0$$
such that $\r{dim}(\rho')=m$ and $\r{dim}(\rho'')=n$. Then $M_{m,n}(A)$ naturally embeds into $M_{m+n}(A)$. Let $Z_{m,n}=Z_{m+n} \cap M_{m,n}(A)$ and let $\r{GL}_{m,n} \subset \r{GL}_{m+n}$ denote those automorphisms that preserve the subspace $\m C^m \subset \m C^{m+n}$. Then $\r{GL}_{m,n}$ acts naturally via conjugation on $M_{m,n}(A)$. Viewing a square matrix $R \in \r{Mat}_{m+n}$ in block-form as
$$R =
\begin{pmatrix}
R^{(1)} & R^{(3)}\\
R^{(4)} & R^{(2)}
\end{pmatrix}
$$
where $R^{(1)}$ is an $m \times m$-matrix, $R^{(2)}$ is an $n \times n$-matrix, $R^{(3)}$ is an $m \times n$-matrix, and $R^{(4)}$ is an $n \times m$ matrix, let
\begin{align*}
   & \pi^A_{m+n,m}: M_{m+n}(A) \rightarrow M_m(A)\\
   & \pi^A_{m+n,n}: M_{m+n}(A) \rightarrow M_n(A)\\
   & \pi^B_{m+n,m}: M_{m+n}(B) \rightarrow M_m(B)\\
   & \pi^B_{m+n,n}: M_{m+n}(B) \rightarrow M_n(B)
\end{align*}
denote the projections
\begin{align*}
    (R_i) &\longmapsto (R_i^{(1)})\\
    (R_i) &\longmapsto (R_i^{(2)})\\
    (S_i) &\longmapsto (S_i^{(1)})\\
    (S_i) &\longmapsto (S_i^{(2)})
\end{align*}
and we use the same notation for their restrictions to $M_{m,n}(A)$ and $M_{m,n}(B)$. Then let $Y_{m,n} \subset M_m(A) \times M_n(A) \times M_{m,n}(B)$ denote the space
$$Y_{m,n} = \Big\{ (\rho', \rho'', \sigma) : f_m(\rho')= \pi^B_{m+n,m}(\sigma),\, f_n(\rho'')=\pi^B_{m+n,n}(\sigma) \Big\}.$$
We can draw the following diagram of $\r{GL}_{m,n}$-equivariant maps
\begin{equation}\label{fig:2.1}
\begin{tikzcd}[column sep = 6em, row sep = 6em]
M_{m+n}(A) & M_{m,n}(A) \arrow[l, hook', "h"'] \arrow[r, "f"] & Y_{m,n}\\
Z_{m+n} \arrow[u, hook, "i_{m+n}"] & Z_{m,n} \arrow[u, hook, "i_{m,n}"] \arrow[l, hook', "\widetilde{h}"'] \arrow[r, "\widetilde{f}"] & Z_m \times Z_n \arrow[u, hook, "i"]
\end{tikzcd}
\end{equation}
where $h$ and $i_{m+n}$ and $i_{m,n}$ are the natural inclusions, $i = (i_m,i_n,0)$, the tilde denotes restriction to $Z_{m,n}$, and
$$f(\rho) = \big(\pi^A_{m+n,m}(\rho),\, \pi^A_{m+n,n}(\rho),\, f_{m+n}(\rho)\big).$$

\begin{rmk}
From the assumption that $A$ and $B$ are smooth, every space on the top row of \eqref{fig:2.1} is smooth. Also both squares in \eqref{fig:2.1} are Cartesian; the left-hand square is clear, and for the right-hand square this is why we needed to define $Y_{m,n}$ as above instead of taking $M_m(A) \times M_n(A)$ as one might expect from the standard convolution product on (cohomological) Hall algebras.
\end{rmk}

\begin{rmk}
When normally defining a cohomological Hall algebra one would usually try to use just the bottom line of \eqref{fig:2.1} as the correspondence diagram and use the pullback along $\widetilde{h}$ and the pushforward along $\widetilde{f}$ to induce the multiplication. Unfortunately however, in our case this will not work because the spaces $Z_n$ are not smooth. In particular this implies that $\widetilde{f}^! \m Q_{Z_m \times Z_n}$ is not necessarily equal to $\m Q_{Z_{m,n}}[2 \r{dim}(\widetilde{f})]$ and so there is no natural morphism we can take that would induce a pushforward along $\widetilde{f}$ to complete the definition of the multiplication. The following will describe how we alleviate this issue using the top row in \eqref{fig:2.1} along with the facts that each space in the top row is smooth and the right-hand square is Cartesian.
\end{rmk}

Let $p:Z_{m+n}/\r{GL}_{m+n} \rightarrow \r{pt}$ and $p':(Z_m \times Z_n)/(\r{GL}_m \times \r{GL}_n) \rightarrow \r{pt}$ be the respective structure maps. Recall that compactly supported cohomology is defined by taking the compactly supported pushforward of the constant sheaf along the structure map. The multiplication on the dual of the equivariant compactly supported cohomology is then given as follows:
\vspace{1em} \\

Firstly the inclusion $\r{GL}_{m,n} \hookrightarrow \r{GL}_{m+n}$ induces the quotient map $q: Z_{m+n}/\r{GL}_{m,n} \rightarrow Z_{m+n}/\r{GL}_{m+n}$ which gives the natural morphism of sheaves
$$\eta^q(\m Q) : \m Q_{Z_{m+n}/\r{GL}_{m+n}} \rightarrow q_* q^* \m Q_{Z_{m+n}/\r{GL}_{m+n}}.$$
Because $q$ is proper and so $q_* = q_!$ we have that $q_* q^* \m Q_{Z_{m+n}/\r{GL}_{m+n}} = q_! \m Q_{Z_{m+n}/\r{GL}_{m,n}}$ and so applying $p_!$ to $\eta^q(\m Q)$ gives the pullback morphism
\begin{align}
    q^\star: \r{H}_{c, \r{GL}_{m+n}}(Z_{m+n}, \m Q) \rightarrow \r{H}_{c, \r{GL}_{m,n}}(Z_{m+n}, \m Q). \label{eq:1.000001}
\end{align}

Next we obtain a natural pullback induced by $\widetilde{h}$. Because $\widetilde{h}$ is $\r{GL}_{m,n}$-equivariant we get an induced map on quotient stacks $\overline{\widetilde{h}}: Z_{m,n}/\r{GL}_{m,n} \rightarrow Z_{m+n}/\r{GL}_{m,n}$. We then have the natural morphism 
$$\eta^{\overline{\widetilde{h}}}(\m Q): \m Q_{Z_{m+n}/\r{GL}_{m,n}} \rightarrow \overline{\widetilde{h}}_* \overline{\widetilde{h}}^* \m Q_{Z_{m+n}/\r{GL}_{m,n}}$$
where again, because $\overline{\widetilde{h}}$ is a closed immersion and hence proper, we have $\overline{\widetilde{h}}_*= \overline{\widetilde{h}}_!$ and therefore $\overline{\widetilde{h}}_* \overline{\widetilde{h}}^* \m Q_{Z_{m+n}/\r{GL}_{m,n}} = \overline{\widetilde{h}}_! \m Q_{Z_{m,n}/\r{GL}_{m,n}}$. So after pushing forward along $p_!$ this gives a morphism
\begin{align}
    \overline{\widetilde{h}}^\star: \r{H}_{c, \r{GL}_{m,n}}(Z_{m+n}, \m Q) \rightarrow \r{H}_{c, \r{GL}_{m,n}}(Z_{m,n}, \m Q). \label{eq:1.000002}
\end{align}

Then we have a natural pushforward induced by $f$ (here we utilise the fact that in \eqref{fig:2.1} the right-hand square is Cartesian and the top row consists of smooth spaces). As $f$ is $\r{GL}_{m,n}$-equivariant we get the induced map $\overline{f}: M_{m,n}(A)/\r{GL}_{m,n} \rightarrow Y_{m,n}/\r{GL}_{m,n}$ and so have the morphism of sheaves 
$$\nu^{\overline{f}}(\m Q) : \overline{f}_!\overline{f}^! \m Q_{Y_{m,n}/\r{GL}_{m,n}} \rightarrow \m Q_{Y_{m,n}/\r{GL}_{m,n}}.$$
Because $M_{m,n}(A)/\r{GL}_{m,n}$ is smooth we have $\overline{f}_!\overline{f}^! \m Q_{Y_{m,n}/\r{GL}_{m,n}} = \overline{f}_! \m Q_{M_{m,n}(A)/\r{GL}_{m,n}}[2d]$ where $d=\r{dim}(\overline{f}) = \r{dim}(f)$. Restricting this morphism to $(Z_m \times Z_n)/\r{GL}_{m,n}$ and precomposing with the base-change inverse $(\epsilon^{\overline{i},\overline{f}})^{-1}: \overline{\widetilde{f}}_! \, \overline{i}_{m,n}^* \xrightarrow{\sim} \overline{i}^* \, \overline{f}_!$ (where $\overline{i}$ and $\overline{i}_{m,n}$ denote the maps on the quotient stacks induced by $i$ and $i_{m,n}$ respectively) gives
\begin{align*}
    & \overline{\widetilde{f}}_! \m Q_{Z_{m,n}/\r{GL}_{m,n}}[2d] = \overline{\widetilde{f}}_! \overline{i}_{m,n}^* \m Q_{M_{m,n}(A)/\r{GL}_{m,n}}[2d] \xrightarrow{\epsilon^{\overline{i}, \overline{f}}(\m Q)[2d]^{-1}} \overline{i}^* \overline{f}_! \m Q_{M_{m,n}(A)/\r{GL}_{m,n}}[2d]\\
    & = \overline{i}^* \overline{f}_! \overline{f}^! \m Q_{Y_{m,n}/\r{GL}_{m,n}} \xrightarrow{\overline{i}^*(\nu^{\overline{f}}(\m Q))} \overline{i}^* \m Q_{Y_{m,n}/\r{GL}_{m,n}} = \m Q_{(Z_m \times Z_n)/\r{GL}_{m,n}}
\end{align*}
and so pushing forward along $p'_!$ gives us a morphism
\begin{align}
    \overline{f}_\star : \r{H}_{c, \r{GL}_{m,n}}^{+2d}(Z_{m,n}, \m Q) \rightarrow \r{H}_{c, \r{GL}_{m,n}}(Z_m \times Z_n, \m Q). \label{eq:1.000003}
\end{align}

Finally we have the isomorphisms
\begin{align}
    \r{H}_{c, \r{GL}_{m,n}}^{+2mn}(Z_m \times Z_n, \m Q) \cong \r{H}_{c, \r{GL}_m \times \r{GL}_n}(Z_m \times Z_n, \m Q) \cong \r{H}_{c, \r{GL}_m}(Z_m, \m Q) \otimes \r{H}_{c,\r{GL}_n}(Z_n, \m Q) \label{eq:1.000004}
\end{align}
- the first induced by the affine fibration $(Z_m \times Z_n) /\r{GL}_{m,n} \rightarrow (Z_m \times Z_n) /(\r{GL}_m \times \r{GL}_n)$ with fibre $\r{Mat}_{m \times n} \cong \m A^{mn}$ which comes from the affine fibration $\r{GL}_{m,n} \rightarrow \r{GL}_m \times \r{GL}_n$, and the second isomorphism is the K\"unneth isomorphism for equivariant compactly supported cohomology.

Our multiplication is then the dual of the composition of \eqref{eq:1.000001}, \eqref{eq:1.000002} and \eqref{eq:1.000003} with the isomorphism \eqref{eq:1.000004}
$$(\overline{f}_\star \circ \overline{\widetilde{h}}^\star \circ q^\star)^\vee : \r{H}_{c, \r{GL}_m}(Z_m, \m Q)^\vee \otimes \r{H}_{c,\r{GL}_n}(Z_n, \m Q)^\vee \longrightarrow \r{H}_{c, \r{GL}_{m+n}}^{-2(d+mn)}(Z_{m+n}, \m Q)^\vee.$$
\\

\begin{rmk}
The diagram \eqref{fig:2.1} depends upon our initial choices \eqref{eq:1.00015} of presentations of $A$ and $B$, and hence so does this multiplication. This underpins the central question we are trying to answer- when do different presentations result in the same multiplication?
\end{rmk}

\begin{rmk}
We have defined things above using sheaves on quotient stacks. Now if $G$ is an algebraic group acting on a variety $X$ then
$$X \rightarrow X/G$$
is an atlas for the quotient stack $X/G$, and the derived category of sheaves on $X/G$ is equivalent to the derived category of $G$-equivariant sheaves on $X$. Because the morphisms $\overline{\widetilde{h}}^\star$ and $\overline{f}_\star$ above are induced from the equivariant maps $\widetilde{h}$ and $f$ on the atlases of the quotient stacks, we in turn have induced morphisms $\widetilde{h}^\star$ and $f_\star$ in the derived category of equivariant sheaves on the atlases that are compatible with $\overline{\widetilde{h}}^\star$ and $\overline{f}_\star$. Hence it is enough to instead view everything at the level of equivariant sheaves on the atlases of the quotient stacks and check the commutativity of diagrams there. Therefore in the following sections we work only at the level of sheaves on varieties and do not work explicitly with the compactly supported cohomology of the quotient stacks/equivariant compactly supported cohomology. In particular instead of the morphism on equivariant compactly supported cohomology $\overline{\widetilde{h}}^\star: \r{H}_{c, \r{GL}_{m,n}}(Z_{m+n}, \m Q) \rightarrow \r{H}_{c, \r{GL}_{m,n}}(Z_{m,n}, \m Q)$ defined above, we consider the morphism of sheaves
\begin{align}
    \widetilde{h}^\star := \eta^{\widetilde{h}}(\m Q): \m Q_{Z_{m+n}} \rightarrow \widetilde{h}_* \widetilde{h}^* \m Q_{Z_{m+n}} = \widetilde{h}_! \m Q_{Z_{m,n}} \label{eq:1.0000045}
\end{align}
and instead of $\overline{f}_\star: \r{H}_{c, \r{GL}_{m,n}}^{+2d}(Z_{m,n}, \m Q) \rightarrow \r{H}_{c, \r{GL}_{m,n}}(Z_m \times Z_n, \m Q)$ we consider $f_\star$ defined as the composition
\begin{align}
    & \widetilde{f}_! \m Q_{Z_{m,n}}[2d] = \widetilde{f}_! i_{m,n}^* \m Q_{M_{m,n}(A)}[2d] \xrightarrow{\epsilon^{i, f}(\m Q[2d])^{-1}} i^* f_! \m Q_{M_{m,n}(A)}[2d] \label{eq:1.000005}\\
    & = i^* f_! f^! \m Q_{Y_{m,n}} \xrightarrow{i^*(\nu^f(\m Q))} i^* \m Q_{Y_{m,n}} = \m Q_{Z_m \times Z_n} \nonumber
\end{align}
\end{rmk}

Since the definition of the pushforward along $f$ is slightly unorthodox we verify that the pushforward of a composition is the composition of pushforwards.

\begin{lem}\label{lem3.11}
Suppose we have a diagram of Cartesian squares
\[
\begin{tikzcd}[column sep = 6em, row sep = 5em]
X_1 \arrow[r, "f"] & X_2 \arrow[r, "g"] & X_3\\
Z_1 \arrow[u, hook, "i_1"] \arrow[r, "\widetilde{f}"] & Z_2 \arrow[u, hook, "i_2"] \arrow[r, "\widetilde{g}"] & Z_3 \arrow[u, hook, "i_3"]
\end{tikzcd}
\]
where $X_1,\, X_2,\, X_3$ are smooth, $i_1,\, i_2,\, i_3$ are closed immersions, and $d= \r{dim}(f),\, e= \r{dim}(g)$. Then the morphism
$$(g \circ f)_\star: (\widetilde{g} \circ \widetilde{f})_! \m Q_{Z_1}[2(d+e)] \longrightarrow \m Q_{Z_3}$$
equals the morphism
$$g_\star \circ \widetilde{g}_!(f_\star): \widetilde{g}_! \widetilde{f}_! \m Q_{Z_1}[2(d+e)] \longrightarrow \m Q_{Z_3}.$$
\end{lem}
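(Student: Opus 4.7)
I would expand both sides of the claimed equality as explicit compositions of natural transformations of sheaves, and verify coincidence by a diagram chase using three formal properties: pseudofunctoriality of $(-)_!$ and $(-)^!$ with respect to composition, functoriality of the counit $\nu$, and compatibility of base-change isomorphisms with horizontally composed Cartesian squares.

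Unfolding the recipe \eqref{eq:1.000005}, the composition $g_\star \circ \widetilde{g}_!(f_\star)$ is the five-step chain alternating base-change with counit
\begin{align*}
\widetilde{g}_!\widetilde{f}_! i_1^* \m Q_{X_1}[2(d+e)] & \xrightarrow{\widetilde{g}_!(\epsilon^{i_2,f})^{-1}} \widetilde{g}_! i_2^* f_! f^! \m Q_{X_2}[2e] \xrightarrow{\widetilde{g}_! i_2^* \nu^f} \widetilde{g}_! i_2^* \m Q_{X_2}[2e] \\
& \xrightarrow{(\epsilon^{i_3, g})^{-1}} i_3^* g_! g^! \m Q_{X_3} \xrightarrow{i_3^* \nu^g} i_3^* \m Q_{X_3},
\end{align*}
while $(g \circ f)_\star$ is the three-step chain
$$(\widetilde{g}\widetilde{f})_! i_1^* \m Q_{X_1}[2(d+e)] \xrightarrow{(\epsilon^{i_3, gf})^{-1}} i_3^* (gf)_! (gf)^! \m Q_{X_3} \xrightarrow{i_3^* \nu^{gf}} i_3^* \m Q_{X_3}.$$
The goal is to break the latter so as to match the former.

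To do so I would invoke two standard compatibilities from [\cite{ks}]. First, base-change for the concatenated Cartesian rectangle factors as
$$(\epsilon^{i_3, gf})^{-1} = \bigl((\epsilon^{i_3, g})^{-1} f_!\bigr) \circ \bigl(\widetilde{g}_!(\epsilon^{i_2, f})^{-1}\bigr).$$
Second, under the canonical identifications $(gf)_! = g_! f_!$ and $(gf)^! = f^! g^!$, the counit decomposes as $\nu^{gf} = \nu^g \circ g_!(\nu^f g^!)$. Substituting both factorisations into the expression for $(g \circ f)_\star$ produces a five-step composition that differs from the one for $g_\star \circ \widetilde{g}_!(f_\star)$ only in the order in which the counit $\nu^f$ and the base-change $(\epsilon^{i_3,g})^{-1}$ are applied. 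Those two orderings agree by naturality of the base-change natural isomorphism $(\epsilon^{i_3, g})^{-1}: \widetilde{g}_! i_2^* \Rightarrow i_3^* g_!$ applied to the morphism $\nu^f g^! \m Q_{X_3} : f_! f^! g^! \m Q_{X_3} \to g^! \m Q_{X_3}$.

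The main obstacle is bookkeeping rather than conceptual: one must carefully verify that the degree shifts $[2d]$, $[2e]$, $[2(d+e)]$ line up with the canonical $\kappa$-identifications from \Cref{rmk3.3} (which is consistent by functoriality of the canonical isomorphism $(gf)^! \m Q \cong f^! g^! \m Q$ for compositions of smooth morphisms), and that the compatibilities cited from [\cite{ks}] are being invoked with the correct orientations. No new idea is needed beyond the three formal properties above.
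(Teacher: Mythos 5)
Your proposal is correct and is essentially the paper's own argument: both expand the two sides via \eqref{eq:1.000005}, then use naturality of $\epsilon^{i_3,g}$ applied to $\nu^f$, the factorisation of base change over the concatenated Cartesian rectangle, and the decomposition $\nu^{g\circ f} = \nu^g \circ g_!(\nu^f)g^!$ — you merely run the chain of substitutions starting from $(g\circ f)_\star$ rather than from $g_\star\circ\widetilde{g}_!(f_\star)$. The shift bookkeeping you flag is handled in the paper implicitly through the canonical identification $(g\circ f)^!\m Q_{X_3}=f^!g^!\m Q_{X_3}$, exactly as you propose.
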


\begin{proof}
From the definition \eqref{eq:1.000005} we see that $g_\star \circ \widetilde{g}_!(f_\star)$ is the composition
\begin{align}
    & (\widetilde{g} \circ \widetilde{f})_! i_1^* (g \circ f)^! \m Q_{X_3} = \widetilde{g}_! \widetilde{f}_! i_1^* f^! g^! \m Q_{X_3} \xrightarrow{\widetilde{g}_! (\epsilon^{i_2,f}(f^! g^! \m Q))^{-1}} \widetilde{g}_! i_2^* f_! f^! g^! \m Q_{X_3} \xrightarrow{\widetilde{g}_! i_2^*(\nu^f(g^! \m Q))} \label{eq:3.61}\\
    & \widetilde{g}_! i_2^* g^! \m Q_{X_3} \xrightarrow{\epsilon^{i_3,g}(g^! \m Q)^{-1}} i_3^* g_! g^! \m Q_{X_3} \xrightarrow{i_3^*(\nu^g(\m Q))} i_3^* \m Q_{X_3} = \m Q_{Z_3}. \nonumber
\end{align}
Applying the natural transformation $\epsilon^{i_3, g}$ to the morphism 
$$i_3^*g_! f_! f^! g^! \m Q_{X_3} \xrightarrow{i_3^* g_!(\nu^f(g^! \m Q))} i_3^*g_! g^! \m Q_{X_3}$$
gives the commutative square
\[\begin{tikzcd}[column sep = 8em, row sep = 7em]
i_3^*g_! f_! f^! g^! \m Q_{X_3} \arrow[d, "\epsilon^{i_3,g}(f_! f^! g^! \m Q)"] \arrow[r, "i_3^* g_!(\nu^f(g^! \m Q_{X_3}))"] & i_3^*g_! g^! \m Q_{X_3} \arrow[d, "\epsilon^{i_3,g}(g^! \m Q)"]\\
\widetilde{g}_! i_2^* f_! f^! g^! \m Q_{X_3} \arrow[r, "\widetilde{g}_! i_2^*(\nu^f(g^! \m Q_{X_3}))"] & \widetilde{g}_! i_2^* g^! \m Q_{X_3}
\end{tikzcd}\]
and so \eqref{eq:3.61} is equal to
\begin{align}
    & (\widetilde{g} \circ \widetilde{f})_! i_1^* (g \circ f)^! \m Q_{X_3} = \widetilde{g}_! \widetilde{f}_! i_1^* f^! g^! \m Q_{X_3} \xrightarrow{\widetilde{g}_!(\epsilon^{i_2, f}(f^! g^! \m Q))^{-1}} \widetilde{g}_! i_2^* f_! f^! g^! \m Q_{X_3} \xrightarrow{(\epsilon^{i_3,g}(f_! f^! g^! \m Q))^{-1}} \label{eq:3.62}\\
    & i_3^* g_! f_! f^! g^! \m Q \xrightarrow{i_3^* g_!(\nu^f(g^! \m Q))}  i_3^* g_! g^! \m Q_{X_3} \xrightarrow{i_3^*(\nu^g(\m Q))} i_3^* \m Q_{X_3} = \m Q_{Z_3}. \nonumber
\end{align}
Now since
$$i_3^* g_! f_! = i_3^* (g \circ f)_! \xrightarrow[\sim]{\epsilon^{i_3, g \circ f}} (\widetilde{g} \circ \widetilde{f})_! i_1^* = \widetilde{g}_! \widetilde{f}_! i_1^* $$
equals
$$i_3^* g_! f_! \xrightarrow[\sim]{\widetilde{g}_!(\epsilon^{i_2,f})} \widetilde{g}_! i_2^* f_! \xrightarrow[\sim]{(\epsilon^{i_3,g}) f_!} \widetilde{g}_! \widetilde{f}_! i_1^*$$
because these natural transformations are defined using the underlying maps $f,\,g,\, i_1,\, i_2,\, i_3$ directly, we have that \eqref{eq:3.62} equals
\begin{align}
    & (\widetilde{g} \circ \widetilde{f})_! i_1^* (g \circ f)^! \m Q_{X_3} \xrightarrow{\epsilon^{i_3, g \circ f}(f^! g^! \m Q))^{-1}} i_3^* (g \circ f)_! f^! g^! \m Q_{X_3} =  i_3^* g_! f_! f^! g^! \m Q_{X_3} \label{eq:3.63}\\
    & \xrightarrow{i_3^* g_!(\nu^f(g^! \m Q))}  i_3^* g_! g^! \m Q_{X_3} \xrightarrow{i_3^*(\nu^g(\m Q))} i_3^* \m Q_{X_3} = \m Q_{Z_3}. \nonumber
\end{align}
Then as
$$g_! f_! f^! g^! = (g \circ f)_! (g \circ f)^! \xrightarrow{\nu^{g \circ f}} \r{id}_{X_3}$$
equals
$$g_! f_! f^! g^! \xrightarrow{g_!(\nu^f) g^!} g_! g^! \xrightarrow{\nu^g} \r{id}_{X_3}$$
we get \eqref{eq:3.63} is equal to
\begin{align}
    (\widetilde{g} \circ \widetilde{f})_! i_1^* (g \circ f)^! \m Q_{X_3} \xrightarrow{\epsilon^{i_3, g \circ f}(f^! g^! \m Q))^{-1}} i_3^* (g \circ f)_! (g \circ f)^! \m Q_{X_3} \xrightarrow{i_3^*(\nu^{g \circ f}(\m Q))} i_3^* \m Q_{X_3} = \m Q_{Z_3} \label{eq:1.205}
\end{align}
which is exactly $(g \circ f)_\star$. 
\end{proof}

\subsection{2D CoHA multiplications for the character variety}
We seek to compare two 2D CoHAs for the character variety, i.e. to compare two multiplications induced by diagrams as in \eqref{fig:2.1} on the dual of the compactly supported cohomology of the stack $\r{Rep}_n(\m C[\pi_1(\Sigma_g)])$ of $n$-dimensional representations of the algebra over the fundamental group of a Riemann surface $\Sigma_g$ of genus g. The first of these will come from the standard presentation of the fundamental group algebra of a Riemann surface
$$\m C[\pi_1(\Sigma_g)] \cong \frac{\m C \langle x_1^{\pm 1},y_1^{\pm 1}, \ldots, x_g^{\pm 1},y_g^{\pm 1} \rangle}{(\lambda - 1)}$$
where $\lambda = \prod_{i=1}^g x_i y_i x_i^{-1} y_i^{-1}$. This presentation allows us to define the space $M_n(\m C[\pi_1(\Sigma_g)])$ of $n$-dimensional representations of $\m C[\pi_1(\Sigma_g)]$ as the zero-locus of the function $\lambda_n: \r{GL}_n^{2g} \rightarrow \r{Mat}_n$ given by 
$$(A_i,B_i) \longmapsto \prod_{i=1}^g A_iB_iA_i^{-1}B_i^{-1}-\r{Id}_n$$
which has a natural $\r{GL}_n$-action via simultaneous conjugation. Then we have that
$$\r{Rep}_n(\m C[\pi_1(\Sigma_g)]) \cong M_n(\m C[\pi_1(\Sigma_g)])/\r{GL}_n.$$
Recall that $\r{Mat}_{m,n}$ is the space of all $(m+n)$-dimensional matrices whose lower-left $n \times m$-block is 0. We shall call such matrices \i{$m,n$-block matrices}. For such an $m,n$-block matrix $R$ we write
$$R= \begin{pmatrix}
        R^{(1)} & R^{(3)}\\
        0 & R^{(2)}
\end{pmatrix}$$
where $R^{(1)}$ is an $m \times m$-matrix, $R^{(2)}$ is an $n \times n$-matrix and $R^{(3)}$ is an $m \times n$-matrix. Define
$$Y_{m,n}= \bigg\{\Big((A_i,B_i),(C_i, D_i), R\Big) \in \r{GL}_m^{2g} \times \r{GL}_n^{2g} \times \r{Mat}_{m,n} \,\, \Big| \,\, \big(R^{(1)}, R^{(2)} \big) = \big(\lambda_m(A_i,B_i),\lambda_n(C_i,D_i) \big)\bigg\}$$
then this space is defined analogously to the $Y_{m,n}$ from diagram \eqref{fig:2.1}. Let
$$V_n = \lambda_n^{-1}(0) = M_n(\m C[\pi_1(\Sigma_g)]).$$
We get a new diagram, in the form of \eqref{fig:2.1}, of $\r{GL}_{m,n}$-equivariant maps
\begin{equation}\label{fig:2}
\begin{tikzcd}[column sep = 6em, row sep = 6em]
\r{GL}_{m+n}^{2g} & \r{GL}_{m,n}^{2g} \arrow[l, hook', "h"'] \arrow[r, "f"] & Y_{m,n}\\
V_{m+n} \arrow[u, hook, "j_{m+n}"] & V_{m,n} \arrow[u, hook, "j_{m,n}"] \arrow[l, hook', "\widetilde{h}"'] \arrow[r, "\widetilde{f}"] & V_m \times V_n \arrow[u, hook, "i"]
\end{tikzcd}
\end{equation}
where $j_{m+n}: V_{m+n} \hookrightarrow \r{GL}_{m+n}^{2g}$ is the natural inclusion and
$$f(A_i,B_i)= \Big((A_i^{(1)},B_i^{(1)}),\, (A_i^{(2)},B_i^{(2)}),\,\, \lambda_{m+n}(A_i,B_i)\Big).$$
The diagram \eqref{fig:2} induces the first multiplication on $\bigoplus_n \r{H}_c(\r{Rep}_n(\m C[\pi_1(\Sigma_g)]), \m Q)^\vee$ as described in Section 3.1.

\vspace{3em} 

For our second multiplication let $\Delta$ be a brane tiling of the surface $\Sigma_g$. As in Section 2 denote by $Q_\Delta$ the dual quiver with associated potential $W_\Delta$, $E$ a cut for $W_\Delta$, and $T \subset Q_\Delta \setminus E$ a maximal tree. Let $Q$ be the quiver with one vertex obtained by contracting $T$ in $Q_\Delta \setminus E$ and let $W$ be corresponding potential on $Q_\Delta \setminus T$. For $n \in \m N$ let
$$M_n = M_n(\m C \widetilde{Q}) = \bigoplus_{a \in Q_1} \r{GL}_n$$
be the space of $n$-dimensional representations of $\m C \widetilde{Q}$, where recall $\m C \widetilde{Q}$ is the localised path algebra of $Q$. Let $M_{m,n} =  M_{m,n}(\m C \widetilde{Q})$ be the space of short exact sequences of $m$-dimensional and $n$-dimensional representations. Then $M_{m,n}$ can be viewed as the space of $m,n$-block representations of $\m C \widetilde{Q}$. Define
$$Y'_{m,n} = \left\{\big(\rho', \rho'', (R_e) \big) \in M_m \times M_n \times \r{Mat}_{m,n}^{|E|} \,\, \bigg| \,\, \big(R_e^{(1)},R_e^{(2)} \big) = \left(\frac{\partial W}{\partial e}(\rho'), \frac{\partial W}{\partial e}(\rho'') \right) \,\, \r{for all $e \in E$}\, \right\}.$$
Let 
$$Z_n = \left\{ \rho \in M_n \,\, \bigg| \,\, \frac{\partial W}{\partial e}(\rho) =0 \,\,\, \r{for all} \,\,\, e \in E \right\} \subset M_n$$
then clearly $Z_n = M_n(\r{Jac}(\widetilde{Q}, W, E))$ and so $$\r{Rep}_n(\r{Jac}(\widetilde{Q},W,E)) \cong Z_n/\r{GL}_n.$$
As in \eqref{fig:2.1}, we get the following diagram of $\r{GL}_{m,n}$-equivariant maps
\begin{equation}\label{fig:1}
\begin{tikzcd}[column sep = 6em, row sep = 6em]
M_{m+n} & M_{m,n} \arrow[l, hook', "h'"'] \arrow[r, "f''"] & Y'_{m,n}\\
Z_{m+n} \arrow[u, hook, "j'_{m+n}"] & Z_{m,n} \arrow[u, hook, "j'_{m,n}"] \arrow[l, hook', "\widetilde{h'}"'] \arrow[r, "\widetilde{f''}"] & Z_m \times Z_n \arrow[u, hook, "i'"]
\end{tikzcd}
\end{equation}
where $j'_{m+n}: Z_{m+n} \hookrightarrow M_{m+n}$ is the natural inclusion and 
$$f''(\rho) = \left(\rho^{(1)}, \rho^{(2)}, \left(\frac{\partial W}{\partial e}(\rho) \right)_{e \in E} \right).$$
The diagram \eqref{fig:1} then induces the second 2D CoHA multiplication on \begin{align*}
    \bigoplus_{n \in \m N} \r{H}_{c, \r{GL}_n}(Z_n, \m Q)^\vee &= \bigoplus_{n \in \m N} \r{H}_{c, \r{GL}_n}(M_n(\r{Jac}(\widetilde{Q},W,E)), \m Q)^\vee\\
    &\cong \bigoplus_{n \in \m N} \r{H}_c(\r{Rep}_n(\r{Jac}(\widetilde{Q},W,E)), \m Q)^\vee\\
    &\cong \bigoplus_{n \in \m N} \r{H}_c(\r{Rep}_n(\m C[\pi_1(\Sigma_g)]), \m Q)^\vee
\end{align*}
where the final isomorphism is induced from the isomorphism of algebras \eqref{eq:2.001}. Note, a priori, this multiplication depends upon the choices for the cut $E$ and the maximal tree $T$.

\section{Comparing morphisms on compactly supported cohomology}

In this section we accumulate results that will allow us to compare the morphisms on compactly supported cohomology used in defining our two prior multiplications on the 2D CoHA.

Consider the diagram
\begin{equation}\label{fig:1.001}
\begin{tikzcd}[column sep = 5em, row sep = 5em]
X \arrow[r, "f"] & Y\\
X \times S \arrow[u, "\pi_X"] \arrow[r, "f'"] & Y \times S \arrow[u, "\pi_Y"]\\
X \arrow[u, hook, "\varphi"] \arrow[r, "f"] & Y \arrow[u, hook, "\psi"]
\end{tikzcd}
\end{equation}
where
\begin{itemize}
    \item $X,Y,S$ are smooth\\
    \item $\pi_X$ and $\pi_Y$ are the respective projections\\
    \item $\varphi$ and $\psi$ are closed immersions such that $\pi_X \circ \varphi = \r{id}_X$ and $\pi_Y \circ \psi = \r{id}_Y$\\
    \item both squares are Cartesian (and hence $\r{dim}(f) = \r{dim}(f') = d$).
\end{itemize}

\begin{rmk}\label{rmk3.1}
This last condition is satisfied if and only if we can write $f'$ as $(f \times \r{id}_S) \circ \zeta$ where $\zeta: X \times S \xrightarrow{\sim} X \times S$ is an isomorphism.
\end{rmk}

For convenience we write $X'= X \times S$ and $Y'= Y \times S$ and let $m= \r{dim}(S)$. From \Cref{rmk3.3} we have that $\pi_X^*[-2m] = \pi_X^!$ and $\pi_Y^*[-2m] = \pi_Y^!$ since $\pi_X$ and $\pi_Y$ are smooth, and so
$$f^!= \r{id}_{\r{D}(X)}^* f^! = \varphi^* \pi_X^* f^! = \varphi^* \pi_X^![-2m] f^! = \varphi^* f^{\prime !} \pi_Y^![-2m] = \varphi^* f^{\prime !} \pi_Y^*.$$
We refer back to the beginning of Section 3.1 for a reminder on the notation of various natural transformations between functors of sheaves.
\begin{lem}\label{lem1.002}
The pair of compositions
\begin{align}
    \r{id}_{\r{D}(X)} = \varphi^* \pi_X^* \xrightarrow{\varphi^*(\theta^{f'}) \pi_X^*} \varphi^* f^{\prime !} f'_! \pi_X^* \xrightarrow{\varphi^* f^{\prime !}(\epsilon^{\pi_Y,f})^{-1}} \varphi^* f^{\prime !} \pi_Y^* f_! = f^! f_! \label{eq:1.00315}
\end{align}
and
\begin{align}
   f_! f^! = f_! \varphi^* f^{\prime !} \pi_Y^* \xrightarrow{(\epsilon^{\psi,f'})^{-1} f^{\prime !} \pi_Y^*} \psi^* f'_! f^{\prime !} \pi_Y^* \xrightarrow{\psi^*(\nu^{f'}) \pi_Y^*} \psi^* \pi_Y^* = \r{id}_{\r{D}(Y)} \label{eq:1.00305}
\end{align}
form a unit-counit adjunction for $f_! \dashv f^!$.
\end{lem}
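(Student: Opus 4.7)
The plan is to directly verify that the candidate unit $\eta$ of \eqref{eq:1.00315} and the candidate counit $\epsilon$ of \eqref{eq:1.00305} satisfy the two triangle identities
\begin{equation*}
(\epsilon f_!) \circ (f_! \eta) = \r{id}_{f_!}, \qquad (f^! \epsilon) \circ (\eta f^!) = \r{id}_{f^!},
\end{equation*}
which together are equivalent to $(\eta, \epsilon)$ defining a unit-counit adjunction for $f_! \dashv f^!$.

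The ingredients I intend to use are: (i) the triangle identities for the canonical unit $\theta^{f'}$ and counit $\nu^{f'}$ of the adjunction $f'_! \dashv f^{\prime !}$; (ii) the naturality of the base-change isomorphisms $\epsilon^{\pi_Y, f}: \pi_Y^* f_! \xrightarrow{\sim} f'_! \pi_X^*$ coming from the top Cartesian square of \eqref{fig:1.001} and $\epsilon^{\psi, f'}: \psi^* f'_! \xrightarrow{\sim} f_! \varphi^*$ coming from the bottom square; and (iii) the section identifications $\varphi^* \pi_X^* = (\pi_X \varphi)^* = \r{id}_{\r{D}(X)}$ and $\psi^* \pi_Y^* = \r{id}_{\r{D}(Y)}$, which hold because $\pi_X \varphi = \r{id}_X$ and $\pi_Y \psi = \r{id}_Y$ (and pullback is functorial on compositions).

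For the first triangle identity I would expand $(\epsilon f_!) \circ (f_! \eta)$ into the five-step composition
\begin{equation*}
f_! = f_! \varphi^* \pi_X^* \to f_! \varphi^* f^{\prime !} f'_! \pi_X^* \to f_! \varphi^* f^{\prime !} \pi_Y^* f_! \to \psi^* f'_! f^{\prime !} \pi_Y^* f_! \to \psi^* \pi_Y^* f_! = f_!
\end{equation*}
and then rearrange it as follows. Using naturality of $(\epsilon^{\psi, f'})^{-1}$ to commute it past the whiskered unit $\varphi^* (\theta^{f'})$, the middle segment becomes a whiskering of $(\nu^{f'} f'_!) \circ (f'_! \theta^{f'})$, which equals $\r{id}_{f'_!}$ by the triangle identity for $f'$. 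The two surviving base-change pieces then combine into the base change for the outer rectangle of \eqref{fig:1.001}; since that rectangle has identity vertical maps, its base change is the identity natural transformation, and the whole composition collapses to $\r{id}_{f_!}$. The second triangle identity will follow from an entirely symmetric argument, with the roles of $(\theta^{f'}, \nu^{f'})$ and of $(\epsilon^{\pi_Y, f}, \epsilon^{\psi, f'})$ swapped, using the identity $\varphi^* \pi_X^* = \r{id}$ on that side.

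The main obstacle I expect is the careful bookkeeping required to match whiskerings and to ensure that the two base-change isomorphisms appearing in $\eta$ and $\epsilon$ do combine, under the naturality reshuffles, into the identity base change for the outer trivial Cartesian rectangle in \eqref{fig:1.001}. Once this coherence is in place, both triangle identities reduce cleanly to their analogues for $f'_! \dashv f^{\prime !}$ upstairs.
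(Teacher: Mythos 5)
Your proposal is correct and takes essentially the same route as the paper's proof: both triangle identities are verified by naturality reshuffles that reduce the middle segment to the triangle identities for $f'_! \dashv f^{\prime !}$, with the two base-change pieces cancelling because they compose to the base change of the pasted rectangle with identity vertical maps ($\epsilon^{\pi_Y \circ \psi, f} = \r{id}$). The only caveat is that the second identity is not a literal mirror of the first — in the paper it requires inserting $\psi^*\pi_Y^* = \r{id}$ and checking that the composite of the two base changes whiskered against $\varphi^*\pi_X^*$ is the identity before the triangle identity for $f'$ applies — but this is precisely the bookkeeping you flag.
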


\begin{proof}
Denote \eqref{eq:1.00315} by $\alpha$ and \eqref{eq:1.00305} by $\beta$. Then we must show that the following compositions
\begin{align*}
    & \r{a)} \qquad \quad f_! \xrightarrow{f_!(\alpha)} f_! f^! f_! \xrightarrow{(\beta)f_!} f_!\\
    & \r{b)} \qquad \quad f^! \xrightarrow{(\alpha)f^!} f^! f_! f^! \xrightarrow{f^!(\beta)} f^!
\end{align*}
are the identity natural transformations.

\vspace{2em}

For a) we have that
$$f_! \xrightarrow{f_!(\alpha)} f_! f^! f_! \xrightarrow{(\beta)f_!} f_!$$
is
\begin{align}
    & f_! = f_! \varphi^* \pi_X^* \xrightarrow{f_!\varphi^*(\theta^{f'}) \pi_X^*} f_! \varphi^* f^{\prime !} f'_! \pi_X^* \xrightarrow{f_! \varphi^* f^{\prime !}(\epsilon^{\pi_Y,f})^{-1}} f_! \varphi^* f^{\prime !} \pi_Y^* f_! = f_! f^! f_! \label{eq:1.002}\\
    & = f_! \varphi^* f^{\prime !} \pi_Y^* f_! \xrightarrow{(\epsilon^{\psi,f'})^{-1} f^{\prime !} \pi_Y^* f_!} \psi^* f'_! f^{\prime !} \pi_Y^* f_! \xrightarrow{\psi^*(\nu^{f'}) \pi_Y^* f_!} \psi^* \pi_Y^*  f_! = f_!. \nonumber
\end{align}
Then because these are natural transformations \eqref{eq:1.002} is equal to
\begin{align}
    & f_! = f_! \varphi^* \pi_X^* \xrightarrow{(\epsilon^{\psi,f'})^{-1} \pi_X^*} \psi^* f'_! \pi_X^* \xrightarrow{\psi^* f'_!(\theta^{f'}) \pi_X^*} \psi^* f'_! f^{\prime !} f'_! \pi_X^* \xrightarrow{\psi^*(\nu^{f'}) f'_! \pi_X^*} \label{eq:1.00025}\\
    & \psi^* f'_! \pi_X^* \xrightarrow{\psi^*(\epsilon^{\pi_Y,f})^{-1}} \psi^* \pi_Y^*  f_! = f_!. \nonumber
\end{align}
Indeed by evaluating at some object $\c F \in \r{D}(X)$ we can first apply the commutativity property of the natural transformation $(\epsilon^{\psi, f'})^{-1}$ to the morphism $f_! \varphi^*(\theta^{f'}(\pi_X^* \c F))$ giving us the commutative square
\begin{equation}
\begin{tikzcd}[column sep = 7em, row sep = 5em]
f_! \varphi^* \pi_X^* \c F \arrow[d, "(\epsilon^{\psi, f'})^{-1}(\pi_X^* \c F)"] \arrow[r, "f_! \varphi^*(\theta^{f'}(\pi_X^* \c F))"] & f_! \varphi^* f^{\prime !} f'_! \pi_X^* \c F \arrow[d, "(\epsilon^{\psi, f'})^{-1}(f^{\prime !} f'_! \pi_X^* \c F)"]\\
\psi^* f'_! \pi_X^* \c F \arrow[r, "\psi^* f'_!(\theta^{f'}(\pi_X^* \c F))"] & \psi^* f'_! f^{\prime !} f'_! \pi_X^* \c F
\end{tikzcd}    
\end{equation}
We get similar commutative squares for the natural transformation $(\epsilon^{\psi, f'})^{-1}$ and morphism $f_! \varphi^* f^{\prime !}(\epsilon^{\pi_Y, f}( \c F))^{-1}$, and the natural transformation $\nu^{f'}$ and morphism $f'_! f^{\prime !}(\epsilon^{\pi_Y, f}( \c F))^{-1}$ resulting in commutative squares
\begin{equation}
\begin{tikzcd}[column sep = 9em, row sep = 5em]
f_! \varphi^* f^{\prime !} f'_! \pi_X^* \c F \arrow[d, "(\epsilon^{\psi, f'})^{-1}(f^{\prime !} f'_! \pi_X^* \c F)"] \arrow[r, "f_! \varphi^* f^{\prime !}(\epsilon^{\pi_Y,f}(\c F)^{-1})"] & f_! \varphi^* f^{\prime !} \pi_Y^* f_! \c F \arrow[d, "(\epsilon^{\psi, f'})^{-1}(f^{\prime !} \pi_Y^* f_! \c F)"]\\
\psi^* f'_! f^{\prime !} f'_! \pi_X^* \c F \arrow[r, "\psi^* f'_! f^{\prime !}(\epsilon^{\pi_Y,f}(\c F)^{-1})"] & \psi^* f'_! f^{\prime !} \pi_Y^* f_! \c F
\end{tikzcd}    
\end{equation}
and
\begin{equation}
\begin{tikzcd}[column sep = 8em, row sep = 5em]
f'_! f^{\prime !} f'_! \pi_X^* \c F \arrow[d, "\nu^{f}(f'_! \pi_X^* \c F)"] \arrow[r, "f'_! f^{\prime !}(\epsilon^{\pi_Y,f}(\c F)^{-1})"] & f'_! f^{\prime !} \pi_Y^* f_! \c F \arrow[d, "\nu^{f}(\pi_Y^* f_! \c F)"]\\
f'_! \pi_X^* \c F \arrow[r, "\epsilon^{\pi_Y,f}(\c F)^{-1}"] & \pi_Y^* f_! \c F
\end{tikzcd}    
\end{equation}
Combining these three squares we can see that \eqref{eq:1.00025} equals \eqref{eq:1.002}.

Now by the adjunction $f'_! \dashv f^{\prime !}$ \eqref{eq:1.00025} just equals
\begin{align}
    f_! = f_! \varphi^* \pi_X^* \xrightarrow{(\epsilon^{\psi,f'})^{-1} \pi_X^*} \psi^* f'_! \pi_X^* \xrightarrow{\psi^*(\epsilon^{\pi_Y,f})^{-1}} \psi^* \pi_Y^* f_! = f_!. \label{eq:1.00026}
\end{align}
Then because $(\epsilon^{\psi,f'})\pi_X^* \circ \psi^*(\epsilon^{\pi_Y, f}) = \epsilon^{\pi_Y \circ \psi, f} = \epsilon^{\r{id}_X,f} = \r{id}$ \eqref{eq:1.00026} is the identity as required.

\vspace{2em}

For b) we have that
$$f^! \xrightarrow{(\alpha)f^!} f^! f_! f^! \xrightarrow{f^!(\beta)} f^!$$
is
\begin{align}
    & f^! = \varphi^* \pi_X^* f^! \xrightarrow{\varphi^*(\theta^{f'}) \pi_X^* f^!} \varphi^* f^{\prime !} f'_! \pi_X^* f^! \xrightarrow{\varphi^* f^{\prime !}(\epsilon^{\pi_Y,f})^{-1} f^!} \varphi^* f^{\prime !} \pi_Y^* f_! f^! = f^! f_! f^! \label{eq:1.003}\\
    & = f^! f_! \varphi^* f^{\prime !} \pi_Y^* \xrightarrow{f^!(\epsilon^{\psi,f'})^{-1} f^{\prime !} \pi_Y^*} f^! \psi^* f'_! f^{\prime !} \pi_Y^* \xrightarrow{f^!\psi^*(\nu^{f'}) \pi_Y^*} f^! \psi^* \pi_Y^* = f^!. \nonumber
\end{align}
Now because
$$\pi_Y^* f_! \xrightarrow{\epsilon^{\pi_Y,f}} f'_! \pi_X^*$$
equals
$$\pi_Y^* f_! = \pi_Y^* \psi^* \pi_Y^* f_! \xrightarrow{\pi_Y^* \psi^*(\epsilon^{\pi_Y, f}))} \pi_Y^* \psi^* f'_! \pi_X^* \xrightarrow{\pi_Y^*(\epsilon^{\psi, f'})\pi_X^*} \pi_Y^* f_! \varphi^* \pi_X^* \xrightarrow{(\epsilon^{\pi_Y,f})\varphi^* \pi_X^*} f'_! \pi_X^* \varphi^* \pi_X^* = f'_! \pi_X^*$$
this implies that
$$f'_! \pi_X^* \xrightarrow{(\epsilon^{\pi_Y,f})^{-1}} \pi_Y^* f_! \xrightarrow{\epsilon^{\pi_Y,f}} f'_! \pi_X^*$$
equals
\begin{align*}
    & f'_! \pi_X^* \xrightarrow{(\epsilon^{\pi_Y,f})^{-1}} \pi_Y^* f_! = \pi_Y^* \psi^* \pi_Y^* f_! \xrightarrow{\pi_Y^* \psi^*(\epsilon^{\pi_Y, f}))} \pi_Y^* \psi^* f'_! \pi_X^*\\
    & \xrightarrow{\pi_Y^*(\epsilon^{\psi, f'})\pi_X^*} \pi_Y^* f_! \varphi^* \pi_X^* \xrightarrow{(\epsilon^{\pi_Y,f})\varphi^* \pi_X^*} f'_! \pi_X^* \varphi^* \pi_X^* = f'_! \pi_X^*
\end{align*}
i.e.
$$f'_! \pi_X^* \xrightarrow{\r{id}} f'_! \pi_X^*$$
equals
$$\pi_Y^* \psi^* f'_! \pi_X^*  \xrightarrow{\pi_Y^*(\epsilon^{\psi, f'})\pi_X^*} \pi_Y^* f_! \varphi^* \pi_X^* \xrightarrow{(\epsilon^{\pi_Y,f})\varphi^* \pi_X^*} f'_! \pi_X^* \varphi^* \pi_X^* = f'_! \pi_X^*$$
and therefore \eqref{eq:1.003} equals
\begin{align}
    f^! = \varphi^* f^{\prime !} \pi_Y^* \xrightarrow{\varphi^*(\theta^{f'}) f^{\prime !} \pi_Y^*} \varphi^* f^{\prime !} f'_! f^{\prime !} \pi_Y^* \xrightarrow{\varphi^* f^{\prime !} (\nu^{f'}) \pi_Y^*} \varphi^* f^{\prime !} \pi_Y^* = f^! \label{eq:1.0035}
\end{align}
which is once again the identity by the adjunction $f'_! \dashv f^{\prime !}$.\\
\end{proof}

\begin{lem}\label{lem1.0021}
The natural isomorphism
$$\epsilon^{\pi_Y, f} : \pi_Y^* f_! \xrightarrow{\sim} f'_! \pi_X^*$$
is the identity.
\end{lem}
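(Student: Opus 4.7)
The plan is to unwind the definition of the base-change isomorphism $\epsilon^{\pi_Y, f}$ and exploit the product structure of the square to show that it coincides with the identity. By \Cref{rmk3.1}, one can write $f' = (f \times \r{id}_S) \circ \zeta$ for an isomorphism $\zeta: X \times S \xrightarrow{\sim} X \times S$. The commutativity $\pi_Y \circ f' = f \circ \pi_X$ together with $\pi_Y \circ (f \times \r{id}_S) = f \circ \pi_X$ forces $f \circ \pi_X \circ \zeta = f \circ \pi_X$, and the Cartesian property of the top square then implies $\pi_X \circ \zeta = \pi_X$. In particular $\zeta^* \pi_X^* = \pi_X^*$ as an equality of functors on $\r{D}(X)$.

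Next I would decompose the given Cartesian square horizontally as
\[
\begin{tikzcd}[column sep = 4em, row sep = 3.5em]
X \times S \arrow[r, "\zeta"] \arrow[d, "\pi_X"'] & X \times S \arrow[r, "f \times \r{id}_S"] \arrow[d, "\pi_X"'] & Y \times S \arrow[d, "\pi_Y"]\\
X \arrow[r, "\r{id}_X"] & X \arrow[r, "f"] & Y,
\end{tikzcd}
\]
in which both inner squares are Cartesian. Since the base-change natural isomorphism is compatible with horizontal composition of Cartesian squares, it suffices to verify the statement separately for each of the two inner squares.

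For the left square, the base-change is controlled entirely by the isomorphism $\zeta$: its unit $\r{id} \to \zeta_! \zeta^*$ and counit $\zeta^* \zeta_! \to \r{id}$ are canonical identifications, and combining these with the equality $\zeta^* \pi_X^* = \pi_X^*$ collapses the base-change morphism to the identity transformation on $\pi_X^*$. For the right square, both $\pi_Y^* f_!$ and $(f \times \r{id}_S)_! \pi_X^*$ compute the same functor $\mathcal{F} \mapsto f_! \mathcal{F} \boxtimes \m Q_S$ on $\r{D}(X)$ via the K\"unneth decomposition, and the point is to check that the base-change morphism from \cite{ks}, in this product case, is literally the canonical K\"unneth identification.

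The main obstacle will be this last verification: one must unpack the definition of $\epsilon$ in \cite{ks} as a specific composition of unit-counit pairs and confirm that it reduces to the K\"unneth identification rather than some other natural automorphism of the same functor. The argument uses the triangle identities for the adjunctions $f'^* \dashv f'_*$ and $f_! \dashv f^!$, combined with the smoothness identities $\pi_X^! = \pi_X^*[2m]$ and $\pi_Y^! = \pi_Y^*[2m]$ from \Cref{rmk3.3} (which make the upper-shriek and upper-star functors on the projections agree up to a shift), to collapse the relevant composite to the identity. This is not conceptually deep, but it is a somewhat delicate diagram chase that parallels the techniques used in \Cref{lem1.002}.
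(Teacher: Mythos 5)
Your first move — factoring $f'$ as $(f \times \r{id}_S)\circ\zeta$ via \Cref{rmk3.1} and reducing to the product square — is the same reduction the paper makes, and your treatment of the $\zeta$-square is in effect the paper's observation that $\zeta_!\zeta^*=\r{id}$ for an isomorphism. One small flaw there: the inference from $f\circ\pi_X\circ\zeta=f\circ\pi_X$ and Cartesianness to $\pi_X\circ\zeta=\pi_X$ is not valid for an arbitrary $\zeta$ in the factorisation (take $Y=S=\r{pt}$, so $f'=f$ and \emph{every} automorphism $\zeta$ of $X$ gives a factorisation, while the square is Cartesian; then $\pi_X\circ\zeta=\zeta\neq\pi_X$ in general). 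The fix is simply to take $\zeta$ to be the canonical map to the fibre product, which satisfies $\pi_X\circ\zeta=\pi_X$ by construction; the paper implicitly does this as well.

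The genuine gap is the right-hand (product) square, which is the entire content of the lemma and which your proposal explicitly defers as ``the main obstacle''. Observing that $\pi_Y^*f_!$ and $(f\times\r{id}_S)_!\pi_X^*$ both compute $\c F\mapsto f_!\c F\boxtimes\m Q_S$ only says the two functors are isomorphic; what must be shown is that the \emph{specific} morphism $\epsilon^{\pi_Y,f}$ of [\cite{ks} Proposition 2.5.11] is the identity under a concrete identification of the two sides, and the tools you name cannot do this. The triangle identities hold for every adjunction and carry no information about $\pi_X,\pi_Y$ being projections, and the shifts $\pi_X^!=\pi_X^*[2m]$, $\pi_Y^!=\pi_Y^*[2m]$ never enter, because $\epsilon^{\pi_Y,f}$ is built purely from $*$-level units and counits: it is the composite $\pi_Y^*f_!\to\pi_Y^*f_!\pi_{X*}\pi_X^*\to\pi_Y^*\pi_{Y*}f'_!\pi_X^*\to f'_!\pi_X^*$. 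The paper proves the lemma by unwinding exactly this composite for underived sheaves on basis opens $U\times W\subset Y\times S$: all four presheaves have sections $\{t\in\c F(f^{-1}(U))\,:\,f|_{\r{supp}(t)}\ \r{proper}\}$ (using that properness is stable under base change), the unit for the projection $\pi_X$ and the counit for $\pi_Y$ act as the identity on these sections, the middle exchange map is the identity by the description in [\cite{ks} Proposition 2.5.11], and one then upgrades to derived functors via [\cite{ks} Proposition 2.6.7]. Some computation of this concrete kind (on sections or stalks) is indispensable: without it your ``canonical K\"unneth identification'' is not pinned down as the same identification against which the base-change map is being compared, so the claim that the composite ``collapses to the identity'' remains unproved.
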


\begin{proof}
By \Cref{rmk3.1} we have that $f' = (f \times \r{id}_S) \circ \gamma$ where $\gamma: X \times S \xrightarrow{\sim} X \times S$ is an isomorphism. For the moment assume that $f' = f \times \r{id}_S$.

Working at the level of sheaves (so for the moment everything is underived) the isomorphism $\pi_Y^* f_! \xrightarrow{\sim} f'_! \pi_X^*$ is given in [\cite{ks} Proposition 2.5.11]. It is enough to check this is an equality on a basis for the topology of $Y \times S$. Let $\c F \in \r{Sh}(X)$ and $U \times W \subset Y \times S$ for $U \subset Y$ and $W \subset S$ open. Then because the maps $\pi_X$ and $\pi_Y$ are open we have
\begin{align*}
    \pi_Y^* f_! \c F(U \times W) &= f_! \c F(\pi_Y(U \times W))\\
    &= f_! \c F(U)\\
    &= \big\{t \in \c F(f^{-1}(U)) \,\,\, \big| \,\,\, f|_{\r{supp}(t)} \,\, \r{is proper} \big\}\\
    \pi_Y^* f_! \pi_{X *} \pi_X^* \c F(U \times W) &= f_! \pi_{X *} \pi_X^* \c F(\pi_Y(U \times W))\\
    &= f_! \pi_{X *} \pi_X^* \c F(U)\\
    &= \big\{r \in \pi_{X *} \pi_X^* \c F(f^{-1}(U)) \,\,\, \big| \,\,\, f|_{\r{supp}(r)} \,\, \r{is proper} \big\}\\
    &= \big\{s \in \pi_X^* \c F((\pi_X \circ f)^{-1}(U)) \,\,\, \big| \,\,\, f|_{\pi_X(\r{supp}(s))} \,\, \r{is proper} \big\}\\
    &= \big\{t \in \c F((\pi_X(f^{-1}(U) \times S)) \,\,\, \big| \,\,\, f|_{\r{supp}(t)} \,\, \r{is proper} \big\}\\
    &= \big\{t \in \c F(f^{-1}(U) ) \,\,\, \big| \,\,\, f|_{\r{supp}(t)} \,\, \r{is proper} \big\}\\
    \pi_Y^* \pi_{Y *} f'_! \pi_X^* \c F(U \times W) &= \pi_{Y *} f'_! \pi_X^* \c F(\pi_Y(U \times W))\\
    &= f'_! \pi_X^* \c F(\pi_Y^{-1}(U))\\
    &= \big\{s \in \pi_X^* \c F(f^{\prime -1}(U \times S)) \,\,\, \big| \,\,\, f'|_{\r{supp}(s)} \,\, \r{is proper} \big\}\\
    &= \big\{t \in \c F(f^{-1}(U)) \,\,\, \big| \,\,\, (f \times \r{id}_S)|_{\r{supp}(t) \times S} \,\, \r{is proper} \big\}\\
    &= \big\{t \in \c F(f^{-1}(U)) \,\,\, \big| \,\,\, f|_{\r{supp}(t)} \,\, \r{is proper} \big\}\\
    f'_! \pi_X^* \c F(U \times W) &= \big\{s \in \pi_X^* \c F(f^{\prime -1}(U \times W)) \,\,\, \big| \,\,\, f'|_{\r{supp}(s)} \,\, \r{is proper} \big\}\\
    &= \big\{t \in \c F(f^{-1}(U)) \,\,\, \big| \,\,\, (f \times \r{id}_S)|_{\r{supp}(t) \times S} \,\, \r{is proper} \big\}\\
    &= \big\{t \in \c F(f^{-1}(U)) \,\,\, \big| \,\,\, f|_{\r{supp}(t)} \,\, \r{is proper} \big\}
\end{align*}
where the final equalities for both $\pi_Y^* \pi_{Y *} f'_! \pi_X^* \c F(U \times W)$ and $f'_! \pi_X^* \c F(U \times W)$ arise due to the fact that properness is preserved under base extension. The isomorphism given in [\cite{ks} Proposition 2.5.11] is then the composition
\begin{align}
    \pi_Y^* f_! \longrightarrow \pi_Y^* f_! \pi_{X *} \pi_X^* \longrightarrow \pi_Y^* \pi_{Y *} f'_! \pi_X^* \longrightarrow f'_! \pi_X^* \label{eq:1.0036}
\end{align}
where the first morphism comes from the natural transformation $\r{id}_{\r{Sh}(X)} \rightarrow \pi_{X *} \pi_X^*$ for the adjunction $\pi_X^* \dashv \pi_{X *}$, the second morphism becomes the identity following the description in the proof of [\cite{ks} Proposition 2.5.11], and the third morphism comes from the natural transformation $\pi_Y^* \pi_{Y *} \rightarrow \r{id}_{\r{Sh}(Y \times S)}$ for the adjunction $\pi_Y^* \dashv \pi_{Y *}$. 

Note that for a projection $\pi: P \times Q \rightarrow P$ the natural transformation $\r{id}_{\r{Sh}(P)} \rightarrow \pi_* \pi^*$ is the identity, since for $\c F \in \r{Sh}(P)$ and $U \subset P$ open
\begin{align*}
    \pi_* \pi^* \c F(U) &=  \pi^* \c F(\pi^{-1}(U))\\
    &= \pi^* \c F(U \times T)\\
    &= \c F(\pi(U \times T))\\
    &= \c F(U)
\end{align*}
with restriction maps for $V \subset U$
\begin{align*}
    \r{res}^{\pi_* \pi^* \c F}_{U, V} &= \r{res}^{\pi^* \c F}_{U \times S, V \times S}\\
    &= \r{res}^{\c F}_{U, V}.
\end{align*} 
Hence the first morphism in \eqref{eq:1.0036} is also the identity. For $\c G \in \r{Sh}(Y \times S)$ and $U \times W \subset Y \times S$ open we have
$$\pi_Y^* \pi_{Y *} \c G(U \times W) = \c G(U \times S)$$
and so 
$$\sigma^{\pi_Y}(\c G(U \times W)) = \r{res}^{\c G}_{U \times S, U \times W}$$
hence
\begin{align*}
    \sigma^{\pi_Y}(f'_! \pi_X^*(\c F(U \times W)) &= \r{res}^{f'_! \pi_X^* \c F}_{U \times S, U \times W}\\
    &= \r{res}^{\pi_X^* \c F}_{f^{-1}(U) \times S, f^{-1}(U) \times W}|_{\r{sections with proper support}}\\
    &= \r{res}^{\c F}_{f^{-1}(U), f^{-1}(U)}|_{\r{sections with proper support}}\\
    &= \r{id}_{\c F(f^{-1}(U))}|_{\r{sections with proper support}}
\end{align*}
showing that on a basis \eqref{eq:1.0036} is the identity morphism.
Therefore, since by [\cite{ks} Proposition 2.6.7] the natural isomorphism $\epsilon^{\pi_Y,f}$ of derived functors is induced by the natural isomorphism \eqref{eq:1.0036} at the level of sheaves, upgrading to derived functors implies that $\epsilon^{\pi_Y,f}$ is the identity natural isomorphism too.

In the case that $f' = (f \times \r{id}_S) \circ \gamma$ we have
\begin{align*}
    f'_! (\pi_X \circ \gamma)^* = (f \times \r{id}_S)_! \gamma_! \gamma^* \pi_X^* = (f \times \r{id}_S)_! \pi_X^* = \pi_Y^* f_!
\end{align*}
because for an isomorphism $\gamma : X \times S \xrightarrow{\sim} X \times S$ it is clear that the functor
$$\gamma_! \gamma^* : \r{D}(X \times S) \rightarrow \r{D}(X \times S)$$
is simply the identity functor $\r{id}_{\r{D}(X \times S)}$.
\end{proof}

\begin{lem}\label{lem1.0022}
The composition of natural transformations
\begin{align}
    \varphi^! f^{\prime !}\Big( f'_! \pi_X^! \xrightarrow{(\theta^{\pi_Y}) f'_! \pi_X^!} \pi_Y^! \pi_{Y !} f'_! \pi_X^! = \pi_Y^! f_! \pi_{X !} \pi_X^! \xrightarrow{\pi_Y^! f_!(\nu^{\pi_X})} \pi_Y^! f_! \Big) \label{eq:1.00385}
\end{align}
is the identity.
\end{lem}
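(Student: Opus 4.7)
The composition inside the parentheses defines a natural transformation
$$\beta : f'_! \pi_X^! \longrightarrow \pi_Y^! f_!$$
built from the unit $\theta^{\pi_Y}$ of $\pi_{Y!} \dashv \pi_Y^!$, the functor equality $\pi_{Y!} f'_! = f_! \pi_{X!}$ coming from $\pi_Y f' = f \pi_X$, and the counit $\nu^{\pi_X}$ of $\pi_{X!} \dashv \pi_X^!$. This is exactly the standard presentation of the !-base-change natural transformation associated to the lower Cartesian square of \eqref{fig:1.001}. My strategy is to show that $\beta$ itself is the identity natural transformation, from which the claim will follow by applying $\varphi^! f^{\prime !}$ on the left; note that the target $\varphi^! f^{\prime !} \pi_Y^! f_! = (\pi_Y f' \varphi)^! f_! = f^! f_!$ since $\pi_Y f' \varphi = f$, and under the same identification via $\beta$ the source also becomes $f^! f_!$.

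By \Cref{rmk3.3}, since $\pi_X$ and $\pi_Y$ are smooth of relative dimension $m$, we have canonical equalities $\pi_X^! = \pi_X^*[2m]$ and $\pi_Y^! = \pi_Y^*[2m]$. Under these equalities, the unit $\theta^{\pi_Y}$ and counit $\nu^{\pi_X}$ of the $\pi_! \dashv \pi^!$ adjunctions correspond (up to the shift $[2m]$ and Verdier duality for smooth morphisms) to the counit $\sigma^{\pi_Y}$ and unit $\eta^{\pi_X}$ of the $\pi^* \dashv \pi_*$ adjunctions appearing in the proof of \Cref{lem1.0021}. A direct diagram chase, formally identical to the one that presents the !-base change as the !-analogue of $\epsilon^{\pi_Y,f}$, shows that $\beta$ coincides with the $[2m]$-shift of the inverse of the base-change natural isomorphism $\epsilon^{\pi_Y, f}: \pi_Y^* f_! \xrightarrow{\sim} f'_! \pi_X^*$.

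By \Cref{lem1.0021}, $\epsilon^{\pi_Y, f}$ is the identity natural isomorphism, so its inverse shifted by $[2m]$ is the identity as well. Hence $\beta$ is the identity. Applying the functor $\varphi^! f^{\prime !}$ to an identity natural transformation gives an identity natural transformation, so $\varphi^! f^{\prime !}(\beta) = \r{id}_{f^! f_!}$ as required.

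The main subtlety is confirming that the !-base-change $\beta$ really does become (the shift of) the inverse of $\epsilon^{\pi_Y, f}$ under the smoothness identification $\pi^! = \pi^*[2m]$; this compatibility of base-change with the smoothness identification is standard but requires care in translating $(\theta^{\pi_Y}, \nu^{\pi_X})$ into $(\sigma^{\pi_Y}, \eta^{\pi_X})$ via Verdier duality. Should one prefer to avoid this translation, an alternative is to mimic the explicit open-basis computation carried out in the proof of \Cref{lem1.0021} directly for the !-functors using $\pi^! = \pi^*[2m]$, where the invariance of properness under base extension again makes every step an equality on sections.
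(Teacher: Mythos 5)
Your route differs from the paper's, but it has a gap at exactly the decisive point. Everything rests on the claim that the exchange transformation $\xi\colon f'_!\pi_X^! \rightarrow \pi_Y^! f_!$ built from $\theta^{\pi_Y}$ and $\nu^{\pi_X}$ coincides, under the identifications $\pi_X^! = \pi_X^*[2m]$ and $\pi_Y^! = \pi_Y^*[2m]$ of \Cref{rmk3.3}, with $(\epsilon^{\pi_Y,f})^{-1}[2m]$. You flag this as the ``main subtlety'' and call it standard, but you never prove it, and no specific reference is given. Notice that once \Cref{lem1.0021} is in place, $\epsilon^{\pi_Y,f}$ is literally the identity, so the source and target of $\xi$ are literally equal functors and your compatibility claim \emph{is} the assertion that $\xi$ is the identity --- i.e.\ it is (a strengthening of) the statement being proved. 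So unless you actually establish the compatibility of this $!$-exchange map with proper base change under the purity identification, with the specific Kashiwara--Schapira normalisation of $\kappa$ used throughout this paper, the argument assumes what it sets out to show; and verifying such identities for the concrete unit/counit choices is precisely the business of this section, so it cannot be waved through as folklore.

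Your proposed fallback --- redoing the open-basis computation of \Cref{lem1.0021} directly for the $!$-functors --- does not work as described: with $\pi_X^! = \pi_X^*[2m]$ the counit $\nu^{\pi_X}\colon \pi_{X!}\pi_X^*[2m] \rightarrow \mathrm{id}$ is an integration-along-the-fibre (trace) morphism, and $\theta^{\pi_Y}$ likewise lives only at the derived level, so these are not section-level restriction or identity maps on an open basis and the ``every step is an equality on sections'' mechanism breaks down. The workable version of your idea is exactly the paper's proof: first apply Verdier duality, which converts $(f_!, f^!, \theta, \nu)$ into $(f_*, f^*, \eta, \sigma)$ and turns \eqref{eq:1.00385} into the $*$-level composition \eqref{eq:1.0039}, and only then run the \Cref{lem1.0021}-style computation on an open basis (now without proper supports, and with the middle identification explicitly the identity). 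You mention this Verdier-duality translation in passing; carrying it out, rather than the appeal to an unproved base-change/purity compatibility, is what the proof actually requires.
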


\begin{proof}
To show this natural transformation is the identity it suffices to show its Verdier dual
\begin{align}
    \varphi^* f^{\prime *}\Big(\pi_Y^* f_* \xrightarrow{\pi_Y^* f_*(\eta^{\pi_X})} \pi_Y^* f_* \pi_{X *} \pi_X^* = \pi_Y^* \pi_{Y *} f'_* \pi_X^* \xrightarrow{(\sigma^{\pi_Y}) f'_* \pi_X^*} f'_* \pi_X^* \Big). \label{eq:1.0039}
\end{align}
is the identity. But showing this can be done using almost exactly the same method as the proof of \Cref{lem1.0021}, with the only changes being that we do not consider proper supports and the middle morphism here is explicitly the identity.
\end{proof}

\begin{prop}\label{prop1.003}
The canonical natural transformation $\nu^f: f_! f^! \rightarrow \r{id}_{\r{D}(Y)}$ from the adjunction $f_! \dashv f^!$ is equal to the composition \eqref{eq:1.00305} i.e.
$$f_! f^! = f_! \varphi^* f^{\prime !} \pi_Y^* \xrightarrow{(\epsilon^{\psi,f'})^{-1} f^{\prime !} \pi_Y^*} \psi^* f'_! f^{\prime !} \pi_Y^* \xrightarrow{\psi^*(\nu^{f'}) \pi_Y^*} \psi^* \pi_Y^* = \r{id}_{\r{D}(Y)}$$
\end{prop}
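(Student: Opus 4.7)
The plan is to combine \Cref{lem1.002} with the uniqueness of the counit in a fixed adjunction. \Cref{lem1.002} shows that the pair consisting of \eqref{eq:1.00315} and \eqref{eq:1.00305} is a unit-counit adjunction for $f_! \dashv f^!$. For any fixed pair of adjoint functors $L \dashv R$, specifying the unit $\eta : \r{id} \to RL$ determines the counit $\epsilon : LR \to \r{id}$ uniquely: the triangle identity forces $\epsilon_Y$ to be the preimage of $\r{id}_{RY}$ under the bijection $\r{Hom}(LRY, Y) \xrightarrow{\sim} \r{Hom}(RY, RY)$ given by $g \mapsto R(g) \circ \eta$. Since the canonical pair $(\theta^f, \nu^f)$ is also a unit-counit adjunction for $f_! \dashv f^!$, the claim $\nu^f = \eqref{eq:1.00305}$ reduces to showing that the unit in \eqref{eq:1.00315} agrees with $\theta^f$.

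To verify this equality of units, I would first apply \Cref{lem1.0021} to replace $(\epsilon^{\pi_Y, f})^{-1}$ in \eqref{eq:1.00315} by the identity, so that \eqref{eq:1.00315} collapses to $\varphi^*(\theta^{f'}) \pi_X^*$, precomposed with the identification $\varphi^* \pi_X^* = \r{id}$ and postcomposed with $\varphi^* f^{\prime !} f'_! \pi_X^* = \varphi^* f^{\prime !} \pi_Y^* f_! = f^! f_!$. On the other hand, using the factorisation $f = \pi_Y \circ f' \circ \varphi$ (valid since $\pi_X \circ \varphi = \r{id}$ and the top square is Cartesian), the canonical unit decomposes as
$$\theta^f = \varphi^! f^{\prime !}(\theta^{\pi_Y}) f'_! \varphi_! \,\circ\, \varphi^!(\theta^{f'}) \varphi_! \,\circ\, \theta^\varphi.$$
The goal is to collapse this three-fold composition to $\varphi^*(\theta^{f'}) \pi_X^*$ by invoking: (i) that $\varphi$ is a section of $\pi_X$, so $\pi_{X!} \varphi_! = (\pi_X \varphi)_! = \r{id}$ and $\varphi^* \pi_X^* = \r{id}$, which removes the outer $\theta^\varphi$ factor; (ii) \Cref{lem1.0022}, which says precisely that the inner bubble $\varphi^! f^{\prime !}(\theta^{\pi_Y}) f'_! \varphi_!$, reshaped against an appropriately inserted $\nu^{\pi_X}$, is the identity on $f^! f_!$; and (iii) the shift identifications $\varphi^! = \varphi^*[-2m]$ and $\pi_Y^! = \pi_Y^*[2m]$ of \Cref{rmk3.3}, under which the resulting degree shifts cancel.

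The principal obstacle is the careful diagrammatic bookkeeping required to realise the inner bubble in the decomposition of $\theta^f$ as an instance of \Cref{lem1.0022}. Concretely, one must insert $\pi_X^! \pi_{X!}$ at the correct location in the chain of natural transformations, identify $\pi_{Y!} f'_! \varphi_! = f_!$ via base change for the top Cartesian square combined with $\pi_{X!} \varphi_! = \r{id}$, and track the shift conventions throughout. Once these identifications are performed, \Cref{lem1.0022} supplies the required cancellation of the $(\pi_Y, \pi_X)$-bubble, yielding $\theta^f = \eqref{eq:1.00315}$, and the conclusion $\nu^f = \eqref{eq:1.00305}$ follows by the uniqueness principle described at the outset.
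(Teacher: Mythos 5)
Your top-level reduction is the same as the paper's: by \Cref{lem1.002} the pair \eqref{eq:1.00315}, \eqref{eq:1.00305} is a unit-counit pair for $f_! \dashv f^!$, and since the counit of a fixed adjunction is determined by its unit, everything reduces to proving \eqref{eq:1.00315} $= \theta^f$; the paper argues in exactly this way, and like you it ultimately rests on \Cref{lem1.0021} and \Cref{lem1.0022}. Where you differ is the factorisation: you decompose $\theta^f$ along $f = \pi_Y \circ f' \circ \varphi$, whereas the paper starts from the triangle identity for $\pi_{X!} \dashv \pi_X^!$ together with $\theta^{f \circ \pi_X} = \theta^{\pi_Y \circ f'}$ to rewrite $\theta^f$ as \eqref{eq:1.0038}, and then compares it with \eqref{eq:1.00315} via the three-row diagram \eqref{fig:1.0031}. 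That variation is legitimate in principle, but the way you propose to finish it has a real gap.

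The gap is in your step (iii): you invoke ``$\varphi^! = \varphi^*[-2m]$'' as an identification coming from \Cref{rmk3.3}, but that remark gives the equality $f^*[2d] = f^!$ only when $f$ is \emph{smooth}; $\varphi$ is a closed immersion (a section of the projection), so all that exists is the non-invertible natural transformation $\kappa^{\varphi} : \varphi^*[-2m] \rightarrow \varphi^!$, and on general complexes $\varphi^! \neq \varphi^*[-2m]$. This is precisely the subtlety the paper's proof is engineered to avoid: in \eqref{fig:1.0031} one never inverts $\kappa^\varphi$, one only uses that the column composites are identities because $\varphi^!(\kappa^{\pi_X}) \circ (\kappa^\varphi)\pi_X^*[2m] = \kappa^{\pi_X \circ \varphi} = \kappa^{\r{id}_X} = \r{id}$ (and similarly with $\kappa^{\pi_Y}$ on the right). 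So your ``shift cancellation'' must be replaced by this composition-of-$\kappa$'s argument, i.e.\ by some analogue of the paper's diagram, rather than by treating $\varphi^!$ and $\varphi^*[-2m]$ as equal functors. Two further points: the removal of $\theta^\varphi$ is justified because $\varphi$ is a closed immersion (\Cref{rmk3.2}), not because it is a section of $\pi_X$; and \Cref{lem1.0022} is a statement about $\varphi^! f^{\prime !}$ applied to $f'_! \pi_X^! \rightarrow \pi_Y^! f_!$, whereas your ``inner bubble'' carries $\varphi_!$ rather than $\pi_X^!$ on the right, so the insertion of $\pi_{X!}\pi_X^!$ and the naturality manipulations needed to bring it into the form of the lemma are exactly where the content lies, and they are left unexecuted in your outline.
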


\begin{proof}
We shall show that \eqref{eq:1.00315} is equal to the unit $\theta^f$ implying that the counits must also be equal, namely \eqref{eq:1.00305} equals $\nu^f$.

First note the following: from the adjunction $\pi_{X !} \dashv \pi_X^!$ we have that
$$\pi_X^! \xrightarrow{\pi_X^!(\theta^{\pi_X})} \pi_X^! \pi_{X !} \pi_X^! \xrightarrow{(\nu^{\pi_X}) \pi_X^!} \pi_X^!$$
is the identity natural transformation. Applying $\varphi^!$ then post-composing with $\theta^f$ gives
\begin{align}
    \r{id}_{\r{D}(X)} = \varphi^! \pi_X^! \xrightarrow{\varphi^! \pi_X^!(\theta^{\pi_X})} \varphi^! \pi_X^! \pi_{X !} \pi_X^! \xrightarrow{\varphi^!(\nu^{\pi_X}) \pi_X^!} \varphi^! \pi_X^! = \r{id}_{\r{D}(X)} \xrightarrow{\theta^f} f^! f_!. \label{eq:1.00365}
\end{align}
Because these are natural transformations we can rearrange the order in which we do them (using a similar argument as in the proof of \Cref{lem1.002}) making \eqref{eq:1.00365} equal to
\begin{align}
    \r{id}_{\r{D}(X)} = \varphi^! \pi_X^! \xrightarrow{\varphi^! \pi_X^!(\theta^{\pi_X})} \varphi^! \pi_X^! \pi_{X !} \pi_X^! \xrightarrow{\varphi^! \pi_X^!(\theta^f) \pi_{X !} \pi_X^!} \varphi^! \pi_X^! f^! f_! \pi_{X !} \pi_X^! \xrightarrow{\varphi^! \pi_X^! f^! f_!(\nu^{\pi_X})} \varphi^! \pi_X^! f^! f_! =  f^! f_!. \label{eq:1.0037}
\end{align}
Then because
$$\pi_X^!(\theta^f) \pi_{X !} \circ \theta^{\pi_X} = \theta^{f \circ \pi_X} = \theta^{\pi_Y \circ f'} = f^{\prime !} (\theta^{\pi_Y}) f_! \circ \theta^{f'}$$
we get that \eqref{eq:1.0037} equals
\begin{align}
    & \r{id}_{\r{D}(X)} = \varphi^! \pi_X^! \xrightarrow{\varphi^!(\theta^{f'}) \pi_X^!} \varphi^! f^{\prime !} f'_! \pi_X^! \xrightarrow{\varphi^! f^{\prime !} (\theta^{\pi_Y}) f'_! \pi_X^!} \varphi^! f^{\prime !} \pi_Y^! \pi_{Y !} f'_! \pi_X^! \label{eq:1.0038}\\
    & = \varphi^! f^{\prime !} \pi_Y^! f_! \pi_{X !} \pi_X^! \xrightarrow{\varphi^! f^{\prime !} \pi_Y^! f_!(\nu^{\pi_X})} \varphi^! \pi_X^! f^! f_! =  f^! f_!. \nonumber
\end{align}
so in particular $\theta^f$ equals \eqref{eq:1.0038}.

We construct the following diagram whose top row is \eqref{eq:1.00315} and whose bottom row is \eqref{eq:1.0038}.
\begin{equation}\label{fig:1.0031}
\begin{tikzcd}[column sep = 6em, row sep = 5em]
\r{id}_{\r{D}(X)} = \varphi^*[-2m] \pi_X^*[2m] \arrow[d, "(\kappa^\varphi)\pi_X^*{[2m]}"] \arrow[r, "\varphi^*(\theta^{f'}) \pi_X^*"] & \varphi^*[-2m] f^{\prime !} f'_! \pi_X^*[2m] \arrow[d, "(\kappa^\varphi)f^{\prime !} f'_! \pi_X^*{[2m]}"] \arrow[r, "\varphi^* f^{\prime !}(\epsilon^{\pi_Y, f})^{-1}"] & \varphi^*[-2m] f^{\prime !} \pi_Y^*[2m] f_! \arrow[d, "(\kappa^\varphi)f^{\prime !} \pi_Y^*{[2m]} f_!"]\\
\varphi^! \pi_X^*[2m] \arrow[d, "\varphi^!(\kappa^{\pi_X})", "="' sloped] \arrow[r, "\varphi^!(\theta^{f'}) \pi_X^*{[2m]}"] & \varphi^! f^{\prime !} f'_! \pi_X^*[2m] \arrow[d, "\varphi^! f^{\prime !} f'_!(\kappa^{\pi_X})", "="' sloped] \arrow[r, "\varphi^! f^{\prime !}(\epsilon^{\pi_Y, f}{[2m]})^{-1}"] & \varphi^! f^{\prime !} \pi_Y^*[2m] f_! \arrow[d, "\varphi^! f^{\prime !}(\kappa^{\pi_Y}) f_!", "="' sloped]\\
\r{id}_{\r{D}(X)} = \varphi^! \pi_X^! \arrow[r, "\varphi^!(\theta^{f'}) \pi_X^!"] & \varphi^! f^{\prime !} f'_! \pi_X^! \arrow[r, "\varphi^! f^{\prime !} (\xi^{\pi_X, f'})"] & \varphi^! f^{\prime !} \pi_Y^! f_!
\end{tikzcd}    
\end{equation}
where $\xi^{\pi_X, f'}: f'_! \pi_X^! \rightarrow \pi_Y^! f_!$
is the composition
$$f'_! \pi_X^! \xrightarrow{(\theta^{\pi_Y}) f'_! \pi_X^!} \pi_Y^! \pi_{Y !} f'_! \pi_X^! = \pi_Y^! f_! \pi_{X !} \pi_X^! \xrightarrow{\pi_Y^! f_!(\nu^{\pi_X})} \pi_Y^! f_!.$$
The composition of the natural transformations in the left-hand column is the identity since 
$$\varphi^!(\kappa^{\pi_X}) \circ (\kappa^\varphi) \pi_X^*[2m] = \kappa^{\pi_X \circ \varphi} =  \kappa^{\r{id}_X} = \r{id}.$$
For the right-hand column we have that
$$(\kappa^\varphi) f^{\prime !} \pi_Y^*[2m] f_! = (\kappa^\varphi) \pi_X^*[2m] f^! f_!$$
and
$$\varphi^! f^{\prime !}(\kappa^{\pi_Y}) f_! = \varphi^!(\kappa^{\pi_X}) f^! f_!$$
(simply using the fact that $\kappa^{\pi_X} = \r{id}$ and $\kappa^{\pi_Y}= \r{id}$) and so, for the same reason as the left-hand column, the composition in the right-hand column is also the identity.
Therefore it suffices to show this diagram commutes in order to prove that the two units are equal and hence conclude the proof of the proposition. Commutativity of the top two squares and bottom left square in \eqref{fig:1.0031} are clear because once evaluated on some $\c F \in \r{D}(X)$ these once again become the squares obtained by applying the natural transformations to some morphism, and hence they must commute by definition of a natural transformation. Finally for the bottom right square; we already know that $\kappa^{\pi_X}$ and $\kappa^{\pi_Y}$ are just the identity natural transformations, and by \Cref{lem1.0021} and \Cref{lem1.0022} we have that the two horizontal arrows in this square are the identity as well, and hence it also commutes.
\end{proof}
\vspace{3em}
Now consider the following commutative diagram
\begin{equation}\label{fig:1.004}
\begin{tikzcd}
& X' \arrow[hookleftarrow]{dd}[near end]{j'} \arrow[rr, "f'"] && Y' \arrow[hookleftarrow]{dd}{i'}\\
    X \arrow[hookleftarrow]{dd}{j} \arrow[ru, hook, "\varphi"] \arrow[rr, "f" near end, crossing over] && Y \arrow[ru, hook, "\psi"]\\
& V' \arrow[rr, "\widetilde{f'}" near start] && Z'\\
V \arrow[ru, "\widetilde{\varphi}", "\sim"' sloped] \arrow[rr, "\widetilde{f}"] && Z \arrow[uu, hook', "i"' near end, crossing over] \arrow[ru, "\widetilde{\psi}", "\sim"' sloped]
\end{tikzcd}
\end{equation}
where 
\begin{itemize}
    \item the top face of this cube fits as the bottom square into a diagram as in \eqref{fig:1.001} and satisfies all the conditions proceeding \eqref{fig:1.001}\\
    \item $i,i',j,j'$ are closed immersions\\
    \item $\widetilde{\varphi}$ and $\widetilde{\psi}$ are isomorphisms\\
    \item all squares are Cartesian.
\end{itemize}

\vspace{2em}

We wish to show that the morphisms
$$\widetilde{\psi}_!(f_\star): \widetilde{\psi}_! \widetilde{f}_! \m Q_{V}[2d] \longrightarrow \widetilde{\psi}_! \m Q_Z$$
and 
$$f'_\star: \widetilde{f}'_! \m Q_{V'}[2d] \longrightarrow \m Q_{Z'}$$
are equal via the isomorphisms on sheaves between $V$ and $V'$ and between $Z$ and $Z'$ induced by pulling-back along $\widetilde{\varphi}$ and $\widetilde{\psi}$ respectively. Using the definition \eqref{eq:1.000005} from Section 3.1 we have that $\widetilde{\psi}_!(f_\star)$ is given by
\begin{align}
    & \widetilde{\psi}_! \widetilde{f}_! \m Q_{V}[2d] = \widetilde{\psi}_! \widetilde{f}_! j^* \m Q_{X}[2d] \xrightarrow[\sim]{\widetilde{\psi}_!(\epsilon^{i,f}(\m Q_{X}[2d])^{-1})} \widetilde{\psi}_! i^* f_! \m Q_{X}[2d] \label{eq:1.005}\\
    & = \widetilde{\psi}_! i^* f_! f^! \m Q_{Y} \xrightarrow{\widetilde{\psi}_!i^*(\nu^f(\m Q_{Y}))} \widetilde{\psi}_! i^* \m Q_{Y} = \widetilde{\psi}_! \m Q_Z \nonumber  
\end{align}
and $f'_\star$ is given by
\begin{align}
   & \widetilde{f}'_! \m Q_{V'}[2d] = \widetilde{f}'_! j^{\prime *} \m Q_{X'}[2d] \xrightarrow[\sim]{\epsilon^{i',f'}(\m Q_{X'}[2d])^{-1}} i^{\prime *} f'_! \m Q_{X'}[2d] \label{eq:1.006}\\
   & = i^{\prime *} f'_! f^{\prime !} \m Q_{Y'} \xrightarrow{i'^*(\nu^{f'}(\m Q_{Y'}))} i'^* \m Q_{Y'} = \m Q_{Z'}. \nonumber
\end{align}

\begin{thm}\label{thm1.04}
The following diagram of sheaves in $\r{D}(Z')$ commutes
\begin{equation}\label{fig:1.007}
\begin{tikzcd}[column sep = 7em, row sep=6em]
\widetilde{f}'_! \m Q_{V'}[2d] \arrow[r, "\widetilde{f}'_!(\widetilde{\varphi}^\star)", "\sim"'] \arrow[d, "f'_\star"] & \widetilde{\psi}_! \widetilde{f}_! \m Q_{V}[2d] \arrow[d, "\widetilde{\psi}_!(f_\star)"]\\
\m Q_{Z'} \arrow[r, "\widetilde{\psi}^\star", "\sim"'] & \widetilde{\psi}_! \m Q_{Z}
\end{tikzcd}
\end{equation}
\end{thm}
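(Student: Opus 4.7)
The strategy is to expand both vertical morphisms in \eqref{fig:1.007} according to the definition \eqref{eq:1.005}, and then reconcile them using \Cref{prop1.003}, which expresses the counit $\nu^f$ in terms of $\nu^{f'}$. Concretely, I would unwind $f'_\star$ via \eqref{eq:1.006} and unwind $\widetilde{\psi}_!(f_\star) \circ \widetilde{f}'_!(\widetilde{\varphi}^\star)$ by composing $\eta^{\widetilde{\varphi}}(\m Q)$ with $\widetilde{\psi}_!$ applied to the expansion of $f_\star$. The essential discrepancy between the two resulting compositions is the appearance of $\nu^f$ on one side and $\nu^{f'}$ on the other; the intervening base-change isomorphisms $(\epsilon^{i,f})^{-1}$ and $(\epsilon^{i',f'})^{-1}$ can be compared using the composability property of base-change for composite Cartesian squares, which was already invoked in the proof of \Cref{lem3.11} (and is available here because every face of the cube \eqref{fig:1.004} is Cartesian).

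Next, I would apply \Cref{prop1.003} to the top face of the cube, which by hypothesis fits as the bottom square of a diagram of the shape \eqref{fig:1.001}, in order to substitute for $\nu^f$ in the expression obtained for $f_\star$. This rewrites the composition defining $f_\star$ in terms of $\nu^{f'}$ together with the base-change $(\epsilon^{\psi,f'})^{-1}$ and the pullback functors $\pi_Y^*$ and $\pi_X^*$ for the auxiliary projections on the top face. After this substitution, a systematic chase using naturality of each base-change isomorphism, combined with \Cref{lem1.0021} and \Cref{lem1.0022} (which identify the relevant projection base-changes with the identity), should transform $\widetilde{\psi}_!(f_\star) \circ \widetilde{f}'_!(\widetilde{\varphi}^\star)$ into $\widetilde{\psi}^\star \circ f'_\star$.

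The main obstacle will be the bookkeeping. The cube \eqref{fig:1.004} has six faces, each contributing its own base-change isomorphism, and the substitution via \Cref{prop1.003} introduces further natural transformations associated with the top face's own auxiliary projections. Making sure that all of these assemble so that what remains after simplification is exactly the base-change $(\epsilon^{i',f'})^{-1}$ and the counit $i'^*(\nu^{f'}(\m Q_{Y'}))$ used in \eqref{eq:1.006} requires careful use of the cube's coherence, in particular the fact that the composite of two adjacent Cartesian squares is again Cartesian with base-change factoring as the composition of the constituent base-changes. Naturality of $\eta^{\widetilde{\varphi}}$ with respect to the front face of \eqref{fig:1.004}, together with the adjunction identities that were set up in \Cref{lem1.002}, should then close the diagram chase.
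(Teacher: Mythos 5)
Your proposal is correct and follows essentially the same route as the paper: expand $f'_\star$ and $f_\star$ via \eqref{eq:1.005}--\eqref{eq:1.006}, compare the base-change isomorphisms $(\epsilon^{i,f})^{-1}$ and $(\epsilon^{i',f'})^{-1}$ through the composability of base change over the Cartesian faces of the cube \eqref{fig:1.004}, and invoke \Cref{prop1.003} to replace $\nu^f$ by $\nu^{f'}$, closing the chase with naturality and the adjunction identities for the isomorphisms $\widetilde{\varphi}$ and $\widetilde{\psi}$ (the paper organises exactly this as the two squares of \eqref{fig:1.008} in \Cref{lem1.051} and \Cref{lem1.052}). The only cosmetic difference is that \Cref{lem1.0021} and \Cref{lem1.0022} need not be cited again at this stage, since their content is already absorbed into \Cref{prop1.003}.
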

Using the descriptions \eqref{eq:1.005} and \eqref{eq:1.006} and the description of the pullback on compactly supported cohomology given in Section 3.1, we get that diagram \eqref{fig:1.007} is equal to the outer square of the following diagram and hence to prove \Cref{thm1.04} it suffices to show that \eqref{fig:1.008} commutes.
\begin{equation}\label{fig:1.008}
\begin{tikzcd}[column sep = 8em, row sep=8em]
\widetilde{f}'_! j^{\prime *} f^{\prime !} \m Q_{Y'} \arrow[r, "\widetilde{f}'_!(\eta^{\widetilde{\varphi}}(f^{\prime !} \m Q_{Y'}))", "\sim"'] \arrow[d, "\epsilon^{i',f'}(f^{\prime !} \m Q_{Y'})^{-1}", "\sim"' sloped] & \widetilde{f}'_! \widetilde{\varphi}_! \widetilde{\varphi}^* j^{\prime *} f^{\prime !} \m Q_{Y'} = \widetilde{\psi}_! \widetilde{f}_! j^* f^! \m Q_{Y} \arrow[d, "\widetilde{\psi}_!(\epsilon^{i,f}(f^! \m Q_{Y})^{-1})", "\sim"' sloped]\\
i^{\prime *} f'_! f^{\prime !} \m Q_{Y'} \arrow[r, "\alpha", "\sim "'] \arrow[d, "i^{\prime *}(\nu^{f'}(\m Q_{Y'}))"] & \widetilde{\psi}_! i^* f_! f^! \m Q_{Y} \arrow[d, "\widetilde{\psi}_! i^*(\nu^f(\m Q_{Y}))"]\\
i^{\prime *} \m Q_{Y'} = \m Q_{Z'} \arrow[r, "\eta^{\widetilde{\psi}}(\m Q_{Z'})", "\sim"'] & \widetilde{\psi}_! \widetilde{\psi}^* \m Q_{Z'} = \widetilde{\psi}_! i^* \m Q_Y
\end{tikzcd}
\end{equation}
where $\alpha$ is the isomorphism
\begin{align*}
    & i^{\prime *} f'_! f^{\prime !} \m Q_{Y'} \xrightarrow[\sim]{\eta^{\widetilde{\psi}}(i^{\prime *} f'_! f^{\prime !}\m Q_{Y'})} \widetilde{\psi}_! \widetilde{\psi}^* i^{\prime *} f'_! f^{\prime !} \m Q_{Y'} = \widetilde{\psi}_! \widetilde{\psi}^* (\widetilde{\psi}^{-1})^* i^* \psi^* f'_! f^{\prime !} \m Q_{Y'} =\\
    & \widetilde{\psi}_! i^* \psi^* f'_! f^{\prime !} \m Q_{Y'} \xrightarrow[\sim]{\widetilde{\psi}_! i^*(\epsilon^{\psi,f'}(f^{\prime !} \m Q_{Y'}))} \widetilde{\psi}_! i^* f_! \varphi^* f^{\prime !} \m Q_{Y'} = \widetilde{\psi}_! i^* f_! \varphi^* f^{\prime !} \pi_Y^* \m Q_{Y} = \widetilde{\psi}_! i^* f_! f^! \m Q_{Y}.
\end{align*}

\begin{lem}\label{lem1.051}
The top square in diagram \eqref{fig:1.008} commutes.
\end{lem}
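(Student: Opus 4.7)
The plan is to verify commutativity of the top square of \eqref{fig:1.008} by a direct diagram chase that ultimately reduces to the standard coherence of base-change natural transformations for pasted Cartesian squares within the cube \eqref{fig:1.004}.

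First I would unfold both horizontal arrows. The top arrow $\widetilde{f}'_!(\eta^{\widetilde{\varphi}}(f^{\prime !} \m Q_{Y'}))$ lands in $\widetilde{f}'_! \widetilde{\varphi}_! \widetilde{\varphi}^* j^{\prime *} f^{\prime !} \m Q_{Y'}$, and the identification of this target with $\widetilde{\psi}_! \widetilde{f}_! j^* f^! \m Q_Y$ uses the cube commutations $\widetilde{f}' \widetilde{\varphi} = \widetilde{\psi} \widetilde{f}$ and $j' \widetilde{\varphi} = \varphi j$ together with the pivotal sheaf-level identity $\varphi^* f^{\prime !} \m Q_{Y'} = f^! \m Q_Y$, which follows from the formula $f^! = \varphi^* f^{\prime !} \pi_Y^*$ preceding \Cref{lem1.002} and the fact that $\pi_Y^* \m Q_Y = \m Q_{Y'}$. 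The bottom arrow $\alpha$ is, by definition, built from $\eta^{\widetilde{\psi}}$, the identification $\widetilde{\psi}^* i^{\prime *} = i^* \psi^*$ (coming from $\psi i = i' \widetilde{\psi}$), the top-face base change $\epsilon^{\psi, f'}$, and the same sheaf-level identity $\varphi^* f^{\prime !} \m Q_{Y'} = f^! \m Q_Y$.

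Next, I would apply $\widetilde{\psi}^*$ to both compositions. Because $\widetilde{\psi}$ is an isomorphism, $\widetilde{\psi}^*$ is an equivalence of categories, so equality after $\widetilde{\psi}^*$ is equivalent to the original equality. The zigzag identity for the adjunction $\widetilde{\psi}^* \dashv \widetilde{\psi}_*$ collapses $\widetilde{\psi}^*(\alpha)$ into the single morphism $i^*(\epsilon^{\psi, f'}(\c F))$, where $\c F := f^{\prime !} \m Q_{Y'}$. On the other side, using the formula \eqref{eq:1.0001} of \Cref{rmk3.2} for the base change associated to the Cartesian bottom face of the cube (valid since the isomorphisms $\widetilde{\varphi}, \widetilde{\psi}$ are in particular closed immersions), the image under $\widetilde{\psi}^*$ of the clockwise path rewrites as the composition of three base-change isomorphisms $(\epsilon^{i,f}(\varphi^* \c F))^{-1} \circ \epsilon^{\widetilde{\psi}, \widetilde{f}'}(j^{\prime *} \c F) \circ \widetilde{\psi}^*(\epsilon^{i', f'}(\c F))$, traversing the front, bottom, and back faces of the cube respectively.

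The final step is to invoke the standard pasting coherence for base change: given any decomposition of the outer Cartesian rectangle corresponding to the composite $V \to Y'$ into Cartesian squares coming from faces of the cube, the resulting composition of base-change natural isomorphisms is the same. Concretely, this identifies the three-step composition along the back, bottom, and front faces with the single base change $i^*(\epsilon^{\psi, f'}(\c F))$ along the top face (pulled back via $i$), matching $\widetilde{\psi}^*(\alpha)$ and hence finishing the verification. The principal obstacle in implementing this plan is the bookkeeping required to track the many natural transformations and identifications consistently across the six Cartesian faces of the cube, though each individual step is a routine application of naturality, of the adjunction zigzag identities, or of base-change coherence for pasted Cartesian squares.
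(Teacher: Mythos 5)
Your argument is correct, and at bottom it runs on the same two ingredients as the paper's proof: the triangle identities for the adjunctions attached to the isomorphisms $\widetilde{\varphi},\widetilde{\psi}$ (together with \Cref{rmk3.2} and \eqref{eq:1.0001}), and the compatibility of the base-change isomorphisms with pasting of Cartesian squares. The organization differs, though, in a way worth noting. The paper keeps the outer $\widetilde{\psi}_!$ and the unit $\eta^{\widetilde{\psi}}$ in play throughout: it first slides $\eta^{\widetilde{\psi}}$ past $\epsilon^{i',f'}(f^{\prime !}\m Q_{Y'})^{-1}$ by naturality, then decomposes $\epsilon^{i',f'}$ as a three-fold composite through $i'=\psi\circ i\circ\widetilde{\psi}^{-1}$, and finally cancels the resulting units and counits of $\widetilde{\varphi}^{-1},\widetilde{\psi}^{-1}$ against those of $\widetilde{\varphi},\widetilde{\psi}$. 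You instead apply $\widetilde{\psi}^*$ first, using that it is an equivalence (hence faithful), which collapses $\widetilde{\psi}^*(\alpha)$ to $i^*(\epsilon^{\psi,f'}(\c F))$ and the clockwise path to $(\epsilon^{i,f}(\varphi^*\c F))^{-1}\circ\epsilon^{\widetilde{\psi},\widetilde{f}'}(j^{\prime *}\c F)\circ\widetilde{\psi}^*(\epsilon^{i',f'}(\c F))$, reducing the lemma to the single identity $\epsilon^{i,f}\varphi^*\circ i^*(\epsilon^{\psi,f'})=\epsilon^{\widetilde{\psi},\widetilde{f}'}j^{\prime *}\circ\widetilde{\psi}^*(\epsilon^{i',f'})$, i.e.\ the two vertical pastings of the outer Cartesian rectangle (top$+$front versus back$+$bottom faces of \eqref{fig:1.004}). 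This is a cleaner bookkeeping device and avoids ever writing base change against $\widetilde{\psi}^{-1}$; the price is that you invoke the vertical pasting coherence of $\epsilon$ as a black box, but that is on exactly the same footing as the composition identity for $\epsilon^{\psi\circ i\circ\widetilde{\psi}^{-1},f'}$ that the paper uses without further proof (compare the analogous step in \Cref{lem3.11}), so no genuine gap results.
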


\begin{proof}
This is just a simple matter of rearranging the order we do the morphisms that comprise either side of the square and cancelling.
In particular by applying the natural transformation $\eta^{\widetilde{\psi}}$ to the morphism $\epsilon^{i',f'}(f^{\prime !} \m Q_{Y'})^{-1}$ we get the following commutative square
\begin{equation}\label{fig:1.0081}
\begin{tikzcd}[column sep = 9em, row sep=6em]
\widetilde{f}'_! j^{\prime *} f^{\prime !} \m Q_{Y'} \arrow[d, "\eta^{\widetilde{\psi}}(\widetilde{f}'_! j^{\prime *} f^{\prime !} \m Q_{Y'})"] \arrow[r, "\epsilon^{i',f'}(f^{\prime !} \m Q_{Y'})^{-1}"] & i^{\prime *} f'_! f^{\prime !} \m Q_{Y'} \arrow[d, "\eta^{\widetilde{\psi}}(i^{\prime *} f'_! f^{\prime !} \m Q_{Y'})"]\\
\widetilde{\psi}_! \widetilde{\psi}^* \widetilde{f}'_! j^{\prime *} f^{\prime !} \m Q_{Y'} \arrow[r, "\widetilde{\psi}_! \widetilde{\psi}^*(\epsilon^{i',f'}(f^{\prime !} \m Q_{Y'})^{-1})"] & \widetilde{\psi}_! \widetilde{\psi}^* i^{\prime *} f'_! f^{\prime !} \m Q_{Y'} = \widetilde{\psi}_! i^* \psi^* f'_! f^{\prime !} \m Q_{Y'}
\end{tikzcd}
\end{equation}
Hence \eqref{fig:1.0081} tells us that going along the left-hand and bottom sides of the top square in \eqref{fig:1.008} equals
\begin{align}
    & \widetilde{f}'_! j^{\prime *} f^{\prime !} \m Q_{Y'} \xrightarrow{\eta^{\widetilde{\psi}}(\widetilde{f}'_! j^{\prime *} f^{\prime !} \m Q_{Y'})} \widetilde{\psi}_! \widetilde{\psi}^* \widetilde{f}'_! j^{\prime *} f^{\prime !} \m Q_{Y'} \xrightarrow{\widetilde{\psi}_! \widetilde{\psi}^*(\epsilon^{i',f'}(f^{\prime !} \m Q_{Y'})^{-1})} \label{eq:1.011}\\
    & \widetilde{\psi}_! \widetilde{\psi}^* i^{\prime *} f'_! f^{\prime !} \m Q_{Y'} = \widetilde{\psi}_! i^* \psi^* f'_! f^{\prime !} \m Q_{Y'} \xrightarrow{\widetilde{\psi}_! i^*(\epsilon^{\psi,f'}(f^{\prime !} \m Q_{Y'}))} \widetilde{\psi}_! i^* f_! \varphi^* f^{\prime !} \m Q_{Y'}. \nonumber
\end{align}
Then because
$$\epsilon^{i',f'} = \epsilon^{\psi \circ i \circ \widetilde{\psi}^{-1}, f'} = (\epsilon^{\widetilde{\psi}^{-1}, \widetilde{f}})j^* \varphi^* \circ (\widetilde{\psi}^{-1})^*(\epsilon^{i,f}) \varphi^* \circ (\widetilde{\psi}^{-1})^* i^*(\epsilon^{\psi,f'})$$
we get that \eqref{eq:1.011} equals
\begin{align}
    & \widetilde{f}'_! j^{\prime *} f^{\prime !} \m Q_{Y'} \xrightarrow{\eta^{\widetilde{\psi}}(\widetilde{f}'_! j^{\prime *} f^{\prime !} \m Q_{Y'})} \widetilde{\psi}_! \widetilde{\psi}^* \widetilde{f}'_! j^{\prime *} f^{\prime !} \m Q_{Y'} = \label{eq:1.012}\\ 
    & \widetilde{\psi}_! \widetilde{\psi}^* \widetilde{f}'_! (\widetilde{\varphi}^{-1})^* j^* \varphi^* f^{\prime !} \m Q_{Y'} \xrightarrow{\widetilde{\psi}_! \widetilde{\psi}^*(\epsilon^{\widetilde{\psi}^{-1},\widetilde{f}}(j^* \varphi^* f^{\prime !} \m Q_{Y'})^{-1})} \widetilde{\psi}_! \widetilde{\psi}^* (\widetilde{\psi}^{-1})^* \widetilde{f}_! j^* \varphi^* f^{\prime !} \m Q_{Y'} = \nonumber\\
    &  \widetilde{\psi}_! \widetilde{f}_! j^* \varphi^* f^{\prime !} \m Q_{Y'} \xrightarrow{ \widetilde{\psi}_!(\epsilon^{i,f}(\varphi^* f^{\prime !} \m Q_{Y'})^{-1})}  \widetilde{\psi}_! i^* f_! i^* \varphi^* f^{\prime !} \m Q_{Y'} \nonumber
\end{align}
Now recall the description of $\epsilon^{\widetilde{\psi}^{-1},\widetilde{f}}$ given in \eqref{eq:1.0001}, namely it equals
$$(\widetilde{\psi}^{-1})^* \widetilde{f}_! \xrightarrow{(\widetilde{\psi}^{-1})^* \widetilde{f}_!(\eta^{\widetilde{\varphi}^{-1}})} (\widetilde{\psi}^{-1})^* \widetilde{f}_! (\widetilde{\varphi}^{-1})_! (\widetilde{\varphi}^{-1})^* = (\widetilde{\psi}^{-1})^* (\widetilde{\psi}^{-1})_! \widetilde{f}'_! (\widetilde{\varphi}^{-1})^* \xrightarrow{(\sigma^{\widetilde{\psi}^{-1}})\widetilde{f}'_! (\widetilde{\varphi}^{-1})^*} \widetilde{f}'_! (\widetilde{\varphi}^{-1})^*.$$
But since
$$(\widetilde{\varphi}^{-1})_! (\widetilde{\varphi}^{-1})^* \xrightarrow{(\eta^{\widetilde{\varphi}^{-1}})^{-1}} \r{id}_{\r{D}(V)}$$
equals
$$(\widetilde{\varphi}^{-1})_! (\widetilde{\varphi}^{-1})^* \xrightarrow{(\widetilde{\varphi}^{-1})_!(\eta^{\widetilde{\varphi}}) (\widetilde{\varphi}^{-1})^*} (\widetilde{\varphi}^{-1})_! \widetilde{\varphi}_! \widetilde{\varphi}^* (\widetilde{\varphi}^{-1})^* = \r{id}_{\r{D}(V)}$$
and similarly
$$\r{id}_{\r{D}(Z')} \xrightarrow{(\sigma^{\widetilde{\psi}^{-1}})^{-1}} (\widetilde{\psi}^{-1})^* (\widetilde{\psi}^{-1})_!$$
equals
$$\r{id}_{\r{D}(Z')} = (\widetilde{\psi}^{-1})^* \widetilde{\psi}^* \widetilde{\psi}_! (\widetilde{\psi}^{-1})_! \xrightarrow{(\widetilde{\psi}^{-1})^*(\sigma^{\widetilde{\psi}})(\widetilde{\psi}^{-1})_!} (\widetilde{\psi}^{-1})^* (\widetilde{\psi}^{-1})_!$$
we get that \eqref{eq:1.012} equals
\begin{align}
    &  \widetilde{f}'_! j^{\prime *} f^{\prime !} \m Q_{Y'} \xrightarrow{\eta^{\widetilde{\psi}}(\widetilde{f}'_! j^{\prime *} f^{\prime !} \m Q_{Y'})}  \widetilde{\psi}_! \widetilde{\psi}^* \widetilde{f}'_! j^{\prime *} f^{\prime !} \m Q_{Y'} =  \widetilde{\psi}_! \widetilde{\psi}^* \widetilde{\psi}_! (\widetilde{\psi}^{-1})_! \widetilde{f}'_! (\widetilde{\varphi}^{-1})^* j^* \varphi^* f^{\prime !} \m Q_{Y'} \label{eq:1.013}\\
    & \xrightarrow{\widetilde{\psi}_!(\sigma^{\widetilde{\psi}}( (\widetilde{\psi}^{-1})_! \widetilde{f}_! (\widetilde{\varphi}^{-1})^* j^* \varphi^* f^{\prime !} \m Q_{Y'}))}  \widetilde{\psi}_! (\widetilde{\psi}^{-1})_! \widetilde{f}'_! (\widetilde{\varphi}^{-1})^* j^* \varphi^* f^{\prime !} \m Q_{Y'} =  \widetilde{\psi}_! \widetilde{f}_! (\widetilde{\varphi}^{-1})_! (\widetilde{\varphi}^{-1})^* j^* \varphi^* f^{\prime !} \m Q_{Y'} \nonumber\\
    & \xrightarrow{ \widetilde{\psi}_! \widetilde{f}_! (\widetilde{\varphi}^{-1})_!(\eta^{\widetilde{\varphi}}((\widetilde{\varphi}^{-1})^* j^* \varphi^* f^{\prime !} \m Q_{Y'}))}  \widetilde{\psi}_! \widetilde{f}_! (\widetilde{\varphi}^{-1})_! \widetilde{\varphi}_! \widetilde{\varphi}^* (\widetilde{\varphi}^{-1})^* j^* \varphi^* f^{\prime !} \m Q_{Y'} \nonumber\\
    &=  \widetilde{\psi}_! \widetilde{f}_! j^* \varphi^* f^{\prime !} \m Q_{Y'} \xrightarrow{ \widetilde{\psi}_! i^*(\epsilon^{i,f}(\varphi^* f^{\prime !} \m Q_{Y'})^{-1})}  \widetilde{\psi}_! i^* f_! \varphi^* f^{\prime !} \m Q_{Y'}. \nonumber
\end{align}
The composition of first two morphisms in \eqref{eq:1.013} is the identity due to the adjunction $\widetilde{\psi}^* \dashv \widetilde{\psi}_*$ and so we are left with the top and right-hand sides of the top square in \eqref{fig:1.008}.
\end{proof}

\begin{lem}\label{lem1.052}
The bottom square in diagram \eqref{fig:1.008} commutes.
\end{lem}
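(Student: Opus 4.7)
The plan is to unpack the right-hand vertical morphism $\widetilde{\psi}_! i^*(\nu^f(\m Q_Y))$ using \Cref{prop1.003}, observe that the $\epsilon^{\psi,f'}$ inside $\alpha$ cancels with the $(\epsilon^{\psi,f'})^{-1}$ produced by this unpacking, and then finish by applying naturality of $\eta^{\widetilde{\psi}}$ to the morphism $i^{\prime *}(\nu^{f'}(\m Q_{Y'}))$.

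More precisely, first I would apply \Cref{prop1.003} to rewrite
\[
\widetilde{\psi}_! i^*(\nu^f(\m Q_Y)): \widetilde{\psi}_! i^* f_! \varphi^* f^{\prime !} \pi_Y^* \m Q_Y \longrightarrow \widetilde{\psi}_! i^* \m Q_Y
\]
as the composition
\[
\widetilde{\psi}_! i^* f_! \varphi^* f^{\prime !} \pi_Y^* \m Q_Y \xrightarrow{\widetilde{\psi}_! i^*((\epsilon^{\psi,f'})^{-1} f^{\prime !} \pi_Y^*)} \widetilde{\psi}_! i^* \psi^* f'_! f^{\prime !} \pi_Y^* \m Q_Y \xrightarrow{\widetilde{\psi}_! i^* \psi^*(\nu^{f'}) \pi_Y^*} \widetilde{\psi}_! i^* \psi^* \pi_Y^* \m Q_Y.
\]
Precomposing with $\alpha$ (whose last step is exactly $\widetilde{\psi}_! i^*(\epsilon^{\psi,f'}(f^{\prime !} \m Q_{Y'}))$) makes the two $\epsilon^{\psi,f'}$-pieces cancel. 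What remains of the top-then-right composition of the square is
\[
i^{\prime *} f'_! f^{\prime !} \m Q_{Y'} \xrightarrow{\eta^{\widetilde{\psi}}} \widetilde{\psi}_! \widetilde{\psi}^* i^{\prime *} f'_! f^{\prime !} \m Q_{Y'} \xrightarrow{\widetilde{\psi}_! \widetilde{\psi}^* i^{\prime *}(\nu^{f'})} \widetilde{\psi}_! \widetilde{\psi}^* i^{\prime *} \m Q_{Y'},
\]
where I am using the cube identity $\psi \circ i = i' \circ \widetilde{\psi}$, and hence $i^* \psi^* = \widetilde{\psi}^* i^{\prime *}$, to match the functors.

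Then I would recognise that the left-then-bottom composition of the square is
\[
i^{\prime *} f'_! f^{\prime !} \m Q_{Y'} \xrightarrow{i^{\prime *}(\nu^{f'})} i^{\prime *} \m Q_{Y'} \xrightarrow{\eta^{\widetilde{\psi}}} \widetilde{\psi}_! \widetilde{\psi}^* i^{\prime *} \m Q_{Y'},
\]
and that the equality of these two compositions is precisely the naturality square for $\eta^{\widetilde{\psi}}: \r{id}_{\r{D}(Z')} \rightarrow \widetilde{\psi}_! \widetilde{\psi}^*$ evaluated on the morphism $i^{\prime *}(\nu^{f'}(\m Q_{Y'}))$.

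I don't anticipate a genuine obstacle; the only care needed is bookkeeping. In particular, one has to be explicit about how $\widetilde{\psi}^* i^{\prime *}$ and $i^* \psi^*$ are identified (via the cube identity) so that the internal equalities in the definition of $\alpha$ are used consistently, and then the naturality argument closes the diagram. The genuine conceptual content is that $\alpha$ has been set up exactly so that it inverts the $\epsilon^{\psi,f'}$ half of the description of $\nu^f$ coming from \Cref{prop1.003}, leaving only the pushforward along $f'$ on $Y'$ which is what one compares via $\eta^{\widetilde{\psi}}$.
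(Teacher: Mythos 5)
Your proposal is correct and follows essentially the same route as the paper's own proof: decompose $\nu^f$ via \Cref{prop1.003} so that the $\epsilon^{\psi,f'}$-factor inside $\alpha$ cancels against the $(\epsilon^{\psi,f'})^{-1}$, and then close the square by the naturality of $\eta^{\widetilde{\psi}}$ applied to $i^{\prime *}(\nu^{f'}(\m Q_{Y'}))$. No gaps to report.
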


\begin{proof}
This follows from \Cref{prop1.003} and more rearranging. Indeed \Cref{prop1.003} says that
$$\nu^f = \psi^*(\nu^{f'}) \pi_Y^* \circ (\epsilon^{\psi, f'})^{-1} f^{\prime !} \pi_Y^*$$
and so we can immediately see that the composition of top and right-hand sides of the bottom square in \eqref{fig:1.008} gives
\begin{align}
    &  i^{\prime *} f'_! f^{\prime !} \m Q_{Y'} \xrightarrow{\eta^{\widetilde{\varphi}}(i^{\prime *} f'_! f^{\prime !} \m Q_{Y'})}  \widetilde{\psi}_! \widetilde{\psi}^* i^{\prime *} f'_! f^{\prime !} \m Q_{Y'} =  \widetilde{\psi}_! i^* \psi^* f'_! f^{\prime !} \m Q_{Y'} \label{eq:1.014}\\
    & \xrightarrow{ \widetilde{\psi}_! i^* \psi^*(\nu^{f'}(\m Q_{Y'}))}   \widetilde{\psi}_! i^* \psi^* \m Q_{Y'} =  \widetilde{\psi}_! \m Q_{Z}. \nonumber
\end{align}
Now applying the natural transformation $\eta^{\widetilde{\psi}}$ to the morphism $i^{\prime *} f'_! f^{\prime !} \m Q_{Y'} \xrightarrow{i^{\prime *}(\nu^{f'}(\m Q_{Y'}))} i^{\prime *} \m Q_{Y'}$ gives the commutative square
\begin{equation}\label{fig:1.0082}
\begin{tikzcd}[column sep = 8em, row sep=6em]
i^{\prime *} f'_! f^{\prime !} \m Q_{Y'} \arrow[d, "i^{\prime *}(\nu^{f'}(\m Q_{Y'}))"] \arrow[r, "\eta^{\widetilde{\psi}}(i^{\prime *} f'_! f^{\prime !} \m Q_{Y'})"] & \widetilde{\psi}_! \widetilde{\psi}^* i^{\prime *} f'_! f^{\prime !} \m Q_{Y'} \arrow[d, "\widetilde{\psi}_! i^* \psi^*(\nu^{f'}(\m Q_{Y'}))"]\\
i^{\prime *} \m Q_{Y'} \arrow[r, "\eta^{\widetilde{\psi}}(i^{\prime *} \m Q_{Y'})"] & \widetilde{\psi}_! \widetilde{\psi}^* i^{\prime *} \m Q_{Y'}
\end{tikzcd}
\end{equation}
and we can see that the left-hand and bottom sides of \eqref{fig:1.0082} are exactly those in the bottom square in \eqref{fig:1.008} whilst the top and right-hand sides are \eqref{eq:1.014} proving the result.
\end{proof}

The two preceding lemmas then directly imply \Cref{thm3.6}. We end this section with a more general reformulation of \Cref{thm3.6}, in which we slightly generalise what the inclusions $\psi$ and $\varphi$ can look like, and more importantly we only require the squares in diagram \eqref{fig:1.001} to be Cartesian up to an isomorphism of $Y \times S$. This will allow us to more easily apply the results in this section to our character variety CoHAs.

\begin{cor}\label{cor1.06}
Consider diagram \eqref{fig:1.004} and let $\alpha:Y \xrightarrow{\sim} Y$ and $\beta:X \xrightarrow{\sim} X$ be isomorphisms, and let $\delta: Y \times S \xrightarrow{\sim} Y \times S$ be an isomorphism such that the restriction of $\delta$ to $Z'$ via $i'$ is $\r{id}_{Z'}$. Define
\begin{align*}
    & \psi'= \psi \circ \alpha\\
    & \varphi'= \varphi \circ \beta\\
    & \pi_Y'=\alpha^{-1} \circ \pi_Y\\
    & \pi_X'= \beta^{-1} \circ \pi_X \\
    & f''= \delta \circ f'.
\end{align*}
Suppose that in diagram \eqref{fig:1.001} we replace $\psi$ with $\psi'$, $\varphi$ with $\varphi'$, $\pi_Y$ with $\pi'_Y$, and $\pi_X$ with $\pi'_X$, and in \eqref{fig:1.004} we replace $\psi$ with $\psi'$, and $\varphi$ with $\varphi'$. Assume that all of the conditions subsequent to \eqref{fig:1.004} hold with these replacements. Then the diagram
\begin{equation}\label{fig:1.009}
\begin{tikzcd}[column sep = 7em, row sep=6em]
\widetilde{f}'_! \m Q_{V'}[2d] \arrow[r, "\widetilde{f}'_!(\widetilde{\varphi}^{\prime \star})", "\sim"'] \arrow[d, "f''_\star"] & \widetilde{\psi}'_! \widetilde{f}_! \m Q_{V}[2d] \arrow[d, "\widetilde{\psi}'_!(f_\star)"]\\
\m Q_{Z'} \arrow[r, "\widetilde{\psi}^{\prime \star}", "\sim"'] & \widetilde{\psi}'_! \m Q_{Z}
\end{tikzcd}
\end{equation}
commutes.
\end{cor}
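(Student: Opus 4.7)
The plan is to reduce the corollary to Theorem \ref{thm1.04} by handling the modifications by $\alpha, \beta$ separately from the modification by $\delta$. Since $\alpha: Y \xrightarrow{\sim} Y$ and $\beta: X \xrightarrow{\sim} X$ are isomorphisms, replacing $\psi, \varphi, \pi_Y, \pi_X$ by their primed counterparts preserves every condition required by Theorem \ref{thm1.04}: $\psi'$ and $\varphi'$ remain closed immersions, the identities $\pi'_Y \circ \psi' = \r{id}_Y$ and $\pi'_X \circ \varphi' = \r{id}_X$ follow by direct computation, and the Cartesian and smoothness conditions are assumed. Applying Theorem \ref{thm1.04} to the resulting cube (with $f'$ in the top face and primed auxiliary maps) yields the commutative square \eqref{fig:1.007} with $\psi, \varphi, \widetilde{\psi}, \widetilde{\varphi}$ replaced by their primed versions; in particular the left vertical arrow is $f'_\star$ as defined in \eqref{eq:1.000005} using $f'$ and $\widetilde{f'}$.

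It remains to show $f''_\star = f'_\star$ as morphisms $\widetilde{f'}_! \m Q_{V'}[2d] \to \m Q_{Z'}$. Since $\delta$ is an isomorphism with $\delta \circ i' = i'$, we have $\delta(i'(Z')) = i'(Z')$ and, by the isomorphism property of $\delta$, also $\delta^{-1}(i'(Z')) = i'(Z')$; hence $\widetilde{f''} = \widetilde{f'}$ and the source of $f''_\star$ agrees with that of $f'_\star$. Consider the diagram of Cartesian squares
\begin{equation*}
\begin{tikzcd}[column sep = 4em, row sep = 3.5em]
X' \arrow[r, "f'"] & Y' \arrow[r, "\delta"] & Y'\\
V' \arrow[u, hook, "j'"] \arrow[r, "\widetilde{f'}"] & Z' \arrow[u, hook, "i'"] \arrow[r, "\r{id}"] & Z' \arrow[u, hook, "i'"]
\end{tikzcd}
\end{equation*}
where the right square is Cartesian by the above. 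By \Cref{lem3.11} applied with $f = f'$ and $g = \delta$, we obtain $f''_\star = \delta_\star \circ f'_\star$, where $\delta_\star$ is defined via \eqref{eq:1.000005} for the right square (with $\dim(\delta) = 0$).

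The final step is to verify $\delta_\star = \r{id}_{\m Q_{Z'}}$. Unwinding \eqref{eq:1.000005}, $\delta_\star$ is the composition
\begin{equation*}
\m Q_{Z'} = i^{\prime *} \m Q_{Y'} \xrightarrow{(\epsilon^{i', \delta}(\m Q_{Y'}))^{-1}} i^{\prime *} \delta_! \delta^! \m Q_{Y'} \xrightarrow{i^{\prime *}(\nu^{\delta}(\m Q_{Y'}))} i^{\prime *} \m Q_{Y'} = \m Q_{Z'}.
\end{equation*}
Because $\delta$ is an isomorphism, the counit $\nu^\delta: \delta_! \delta^! \to \r{id}_{\r{D}(Y')}$ is the canonical identification. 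The base-change isomorphism $\epsilon^{i', \delta}: i^{\prime *} \delta_! \to \r{id}_! i^{\prime *} = i^{\prime *}$ for the right Cartesian square is likewise the canonical identification at the level of sheaves, by an unwinding on a basis similar to the one used in \Cref{lem1.0021}, applied to $\delta \circ i' = i'$. Hence $\delta_\star$ is the identity, so $f''_\star = f'_\star$ and \eqref{fig:1.009} commutes by the first step.

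The main technical obstacle is the last verification that $\delta_\star$ reduces to the identity morphism on $\m Q_{Z'}$. While this is intuitively clear because $\delta$ fixes $Z'$ pointwise, a careful proof must trace through the base-change and counit natural transformations and confirm they collapse to canonical identifications under the hypothesis $\delta \circ i' = i'$, in the same spirit as the sheaf-level arguments in \Cref{lem1.0021} and \Cref{lem1.0022}.
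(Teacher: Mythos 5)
Your proposal is correct and takes essentially the same route as the paper: first handle the $\alpha,\beta$-modifications by rerunning the Theorem~\ref{thm1.04} argument with the primed maps, then factor $f''_\star = \delta_\star \circ f'_\star$ and reduce to showing that $\nu^\delta$ and $\epsilon^{i',\delta}$ are the identity, using $\delta \circ i' = i'$. The only differences are organisational: you obtain the factorisation by citing \Cref{lem3.11} (the paper instead writes out the composite defining $f''_\star$ directly, which amounts to the same thing), and you leave the verification that $\epsilon^{i',\delta}$ is the identity as a sketched sheaf-level check, which the paper carries out explicitly via [\cite{ks} Proposition 2.5.11] together with a stalk computation at points of $Z'$ exploiting $\delta|_{Z'} = \r{id}_{Z'}$.
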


\begin{rmk}
Note that in \eqref{fig:1.009} we are using the morphism induced by $f''$ instead of $f'$. Also if we let $\widetilde{f}''$ denote the restriction of $f''$ to $V'$ then since $\delta|_{Z'} =  \r{id}_{Z'}$ we have that
$$\widetilde{f}'' = (\delta \circ f')|_{V'} = \delta_{Z'} \circ f'_{V'} = \widetilde{f}'.$$
\end{rmk}

\begin{proof}
First consider the case in which $\delta= \r{id}_{Y \times S}$. Then the proofs for \Cref{lem1.002}, \Cref{lem1.0021}, \Cref{lem1.0022}, \Cref{prop1.003}, \Cref{lem1.051} and \Cref{lem1.052} all follow when replacing $\psi$ and $\varphi$ with $\psi \circ \alpha$ and $\varphi \circ \beta$ and $\pi_Y$ and $\pi_X$ with $\alpha^{-1} \circ \pi_Y$ and $\beta^{-1} \circ \pi_X$.

If $\delta$ is arbitrary then for $f'' = \delta \circ f'$ the pushforward $f''_\star$ is given by the composition
\begin{align*}
    &  \widetilde{f}''_! j^{\prime *} f^{\prime \prime !} \m Q_{Y'}  =  \r{id}_{Z' !} \widetilde{f}'_! j^{\prime *} f^{\prime \prime !} \m Q_{Y'} \xrightarrow{ \r{id}_{Z' !}(\epsilon^{i',f'}(f^{\prime \prime !} \m Q_{Y'})^{-1})}   \r{id}_{Z' !} i^{\prime *} f'_! f^{\prime \prime !} \m Q_{Y'} \xrightarrow{\epsilon^{i',\delta}(f'_! f^{\prime \prime !} \m Q_{Y'})^{-1}}\\
    &   i^{\prime *} \delta_! f'_! f^{\prime \prime !} \m Q_{Y'} =  i^{\prime *} \delta_! f'_! f^{\prime !} \delta^! \m Q_{Y'} \xrightarrow{ i^{\prime *} \delta_!(\nu^{f'}(\delta^! \m Q_{Y'}))}  i^{\prime *} \delta_! \delta^! \m Q_{Y'} \xrightarrow{ i^{\prime *}(\nu^\delta(
    \m Q_{Y'}))}  i^{\prime *} \m Q_{Y'}.
\end{align*}
Hence if we can show that $\epsilon^{i',\delta}$ and $\nu^\delta$ are equal to the identity natural transformations then we are done by \Cref{prop1.003} and the proofs of \Cref{lem1.051} and \Cref{lem1.052}, as we have reduced to the case of $f'$. Clearly $\nu^\delta$ is the identity transformation because for an isomorphism $\delta$ all the units and counits for both adjunctions can be taken to be the identity. As for $\epsilon^{i', \delta}$, we again initially work at the level of sheaves and underived functors and consult [\cite{ks} Proposition 2.5.11]. The natural isomorphism $i^{\prime *} \delta_! \xrightarrow{\sim} \r{id}_{Z' !} i^{\prime *}$ is the composition
$$i^{\prime *} \delta_! \longrightarrow i^{\prime *} \delta_! i'_* i^{\prime *} \longrightarrow i^{\prime *} i'_* \r{id}_{Z' !} i^{\prime *} \longrightarrow \r{id}_{Z' !}  i^{\prime *}$$
where the first morphism comes from the natural transformation $\r{id}_{\r{Sh}(Y \times S)} \rightarrow i'_* i^{\prime *}$ and the third morphism comes from the natural transformation $i^{\prime *} i'_* \rightarrow \r{id}_{\r{Sh}(Z')}$ for the adjunction $i^{\prime *} \dashv i'_*$. For the second morphism, since $\delta$ is an isomorphism and so $\delta_! = \delta_*$, the natural transformation 
$$\delta_! i'_* \rightarrow i'_* \r{id}_{Z' !}$$
described in the proof of [\cite{ks} Proposition 2.5.11] is the identity. Similarly to the derived case, the natural transformation $i^{\prime *} i'_* \rightarrow \r{id}_{\r{Sh}(Z')}$ is the identity natural isomorphism since $i'$ is a closed immersion. Hence it remains to consider $i^{\prime *} \delta_!(\eta^{i'})$. We have that for $\c F \in \r{Sh}(Y')$
$$(i'_* i^{\prime *} \c F)_p =
\begin{cases}
    \c F_p \quad \r{if}\,\, p \in Z'\\
    0 \quad \,\,\,\, \r{if}\,\, p \in Y' \setminus Z' 
\end{cases}
$$
and so
$$\eta^{i'}(\c F)_p = 
\begin{cases}
    \r{id}_{\c F_p} \quad \r{if}\,\, p \in Z'\\
    0 \quad \quad \,\, \r{if}\,\, p \in Y' \setminus Z'.
\end{cases}
$$
Therefore, because $\delta|_{Z'} = \r{id}_{Z'}$, by taking stalks at $p \in Z'$ we get that $i^{\prime *} \delta_!(\eta^{i'})$ is also the identity. Upgrading to the level of derived categories then implies that the natural isomorphism $\epsilon^{i',\delta}$ is also the identity.
\end{proof}

\section{An isomorphism of 2D CoHAs for the character variety}

We now utilise the results from Section 4 to show that the two multiplications on the dual of the compactly supported cohomology of the character variety described in Section 3 are equal, and in the process show that the multiplication derived from the brane tiling picture is independent of the choices of the cut $E$ and maximal tree $T$. We first explain the setup that will allow us to apply the results from Section 4.

\begin{lem}\label{lem3.411}
Let $G$ be the free group $F(y_1, \ldots, y_m)$ and $G'$ be the free group $F(x_1 \ldots, x_m)$, and let $\lambda \in G$ and $\lambda' \in G'$ be words. Suppose we have the following commutative diagram of algebras
\begin{equation}\label{fig:4.01}
\begin{tikzcd}[column sep = 5em, row sep = 5em]
\m C[G'] \arrow[r, "\tau", "\sim"'] \arrow[d, two heads, "p"] & \m C[G] \arrow[d, two heads, "q"]\\
\m C[G']/\big(\lambda'-1 \big) \arrow[r, "\widetilde{\tau}", "\sim"'] & \m C[G]/\big(\lambda -1 \big)
\end{tikzcd}
\end{equation}
where $\lambda$ and $\lambda'$ exist as elements of the group algebras in the natural way, $p$ and $q$ are the obvious quotient maps, $\tau$ is induced from an isomorphism of the free groups 
$$F(x_1, \ldots, x_m) \xrightarrow[\,\, \sim \,\,]{\epsilon} F(y_1, \ldots, y_m),$$
and $\widetilde{\tau}$ is also an isomorphism. Then
$$\tau(\lambda') = u \, \lambda^c \, u^{-1}$$
for some monomial $u \in \m C \langle x_1^{\pm 1},y_1^{\pm 1}, \ldots, x_g^{\pm 1},y_g^{\pm 1} \rangle$ and $c \in \{1,-1\}$.
\end{lem}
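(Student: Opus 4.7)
The plan is to reduce the statement to a classical theorem of Magnus concerning normal closures in free groups. Since $\tau$ is induced by the group isomorphism $\epsilon\colon G' \xrightarrow{\sim} G$, one has $\tau(\lambda') = \epsilon(\lambda')$, which is already a single group element (hence a monomial). Thus the content of the lemma is to show that $\epsilon(\lambda')$ is a conjugate of $\lambda$ or $\lambda^{-1}$ inside the free group $G$.

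First I would translate the diagram's commutativity into an ideal-membership statement: the equality $q\circ\tau = \widetilde{\tau}\circ p$ applied to $\lambda'-1$ shows that $\epsilon(\lambda')-1$ lies in the two-sided ideal $(\lambda-1)\lhd \m C[G]$. Since $\widetilde{\tau}$ is assumed to be an isomorphism, the inverse diagram (with $\tau^{-1}$ and $\widetilde{\tau}^{-1}$) yields the symmetric statement $\epsilon^{-1}(\lambda)-1 \in (\lambda'-1)\lhd \m C[G']$. Next I would convert these into group-theoretic statements. Using the identity $g(\lambda-1)g^{-1} = g\lambda g^{-1}-1$ inside $\m C[G]$, it is straightforward to see that $\m C[G]/(\lambda-1)\cong \m C[G/N(\lambda)]$, where $N(\lambda)$ denotes the normal closure of $\lambda$ in $G$. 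Consequently, for a group element $h\in G$, one has $h-1\in(\lambda-1)$ if and only if $h\in N(\lambda)$. Applied to $\epsilon(\lambda')$ this gives $\epsilon(\lambda')\in N(\lambda)$, and applying $\epsilon$ to the analogous statement $\epsilon^{-1}(\lambda)\in N(\lambda')$ yields $\lambda \in N(\epsilon(\lambda'))$. Hence $N(\lambda) = N(\epsilon(\lambda'))$ in $G$.

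Finally I would invoke Magnus's theorem on normal closures in free groups, which states that if two elements of a free group generate the same normal subgroup, then one is conjugate to the other or to its inverse. This immediately yields $\epsilon(\lambda') = u\,\lambda^c\,u^{-1}$ for some $u\in G$ and $c\in\{1,-1\}$, which is the desired conclusion (reading $u$ as a monomial in the group algebra via the inclusion $G\hookrightarrow \m C[G]$).

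The main (only) nontrivial input here is Magnus's theorem; everything else is a routine translation between ideals in a free group algebra and normal subgroups. A minor technical point to verify carefully is the isomorphism $\m C[G]/(\lambda-1)\cong \m C[G/N(\lambda)]$, but this follows at once from the universal property of the group algebra applied to the quotient map $G\twoheadrightarrow G/N(\lambda)$ together with the observation that both sides have the obvious basis indexed by $G/N(\lambda)$.
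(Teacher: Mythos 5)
Your proof is correct and follows essentially the same route as the paper: both arguments translate the commutativity of the diagram (using that $\widetilde{\tau}$ is an isomorphism, hence arguing in both directions) into the equality of normal closures $N(\epsilon(\lambda')) = N(\lambda)$ in the free group, and then conclude with Magnus's theorem that elements with equal normal closures are conjugate up to inversion. The only cosmetic difference is that you extract the normal-closure equality via the identification $\m C[G]/(\lambda-1) \cong \m C[G/N(\lambda)]$ and element-wise containments, whereas the paper proves the ideal equality $(\tau(\lambda')-1)=(\lambda-1)$ directly and then invokes the injectivity of the map sending a normal subgroup $N$ to the ideal generated by $\{n-1 : n\in N\}$; these are the same correspondence in slightly different packaging.
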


\begin{proof}
First we claim that
$$\big(\tau(\lambda') -1 \big) = \big(\lambda -1 \big)$$
as ideals in $\m C\langle y_1^{\pm 1}, \ldots, y_m^{\pm 1} \rangle$. Suppose not, then either there exists some element $a \in \big(\tau(\lambda') -1 \big)$ such that $a \notin \big(\lambda -1 \big)$ or there exists some $a \in \big(\lambda -1 \big)$ such that $a \notin \big(\tau(\lambda') -1 \big)$. Assume the first case holds. Write $a= \tau(b)$ for some $b \in \big(\lambda' -1 \big)$. Then $p(b)=0$ hence $\widetilde{\tau}(p(b))=0$ and so by the commutativity of \eqref{fig:4.01} we get that $q(\tau(b)) =  q(a) = 0$. But this implies that $a \in \big(\lambda -1 \big)$ a contradiction to the initial assumption. The second case follows from a similar argument.

As $\tau$ is induced from the group isomorphism $\epsilon$ consider the elements $\lambda,\, \epsilon(\lambda') \in F(y_1, \ldots, y_m)$. Let $N(\lambda)$ and $N(\epsilon(\lambda'))$ denote the normal closures in $F(y_1, \ldots, y_m)$ of $\lambda$ and $\epsilon(\lambda')$ respectively i.e. the smallest normal subgroups of $F(y_1, \ldots, y_m)$ that contain $\lambda$ or $\epsilon(\lambda')$ respectively. Now for an arbitrary group $H$ we have an injective mapping from the set of all normal subgroups of $H$ to the set of two-sided ideals of the group algebra $\m C[H]$ that sends a normal subgroup $N \lhd H$ to the ideal generated by all elements of the form $h -1$ for $h \in N$ (see [\cite{ms} Proposition 3.3.6]). Then clearly the ideal generated by $N(\lambda)$ is $\big(\lambda -1 \big)$ and the ideal generated by $N(\epsilon(\lambda'))$ is $\big(\tau(\lambda')-1 \big)$, and hence the equality of ideals $\big(\tau(\lambda')-1 \big) = \big(\lambda -1 \big)$ in $\m C[G]$ implies the equality of the normal closures
$$N(\epsilon(\lambda')) = N(\lambda)$$
in $F(y_1, \ldots, y_m)$.

Then [\cite{ls} Proposition 5.8] says that two elements of a free group have equal normal closures if and only if one is conjugate to the other or its inverse, i.e. we get that
$$\epsilon(\lambda')=u \lambda^c u^{-1}$$
for some $c \in \{1,-1\}$ and some word $u \in F(x_1,y_1,\ldots, x_g,y_g)$ which immediately implies the result.
\end{proof}

For an arbitrary quiver with one vertex $Q$ note that the localised path algebra $\m C \widetilde{Q}$ is the group algebra of the free group $F(Q) := F(a \,:\, a \in Q_1)$ whose generators are the arrows in $Q$.

\begin{prop}\label{lem3.412}
For a Riemann surface $\Sigma_g$ fix a brane tiling $\Delta$, a cut $E$ for $(Q_\Delta, W_\Delta)$ and a maximal tree $T \subset Q_\Delta \setminus E$. Let $Q$ denote the quiver given by contracting $T$ in $Q_\Delta \setminus E$. Then there exists a subquiver $Q' \subset Q$ such that
\begin{enumerate}[\normalfont i)]
    \item There exists an equality of ideals in $\m C \widetilde{Q}$
    \begin{align}
        \Big(h - p_h \,:\, h \in Q_1 \setminus Q'_1  \Big)  = \Bigg(\frac{\partial W}{\partial e} \,:\, e \neq e_0 \Bigg) \label{eq:5.1}
    \end{align}
    where $p_h$ is a path in the localised path algebra $\m C\widetilde{Q}'$ and $e_0$ is some nominated element of the cut $E$\\
    \item $|Q'_1| = 2g$\\
    \item There exists an isomorphism of free groups
    $$\epsilon: F(Q'_1) \xrightarrow{\,\, \sim \,\,} F(x_1,y_1, \ldots, x_g,y_g)$$
    which induces the isomorphism of group algebras
    $$\tau'': \m C \widetilde{Q}' \xrightarrow{\,\, \sim \,\,} \m C\langle x_1^{\pm 1}, y_1^{\pm 1}, \ldots, x_g^{\pm 1},y_g^{\pm 1} \rangle$$
    such that the surjection
    $$\tau': \m C \widetilde{Q} \xrightarrow{\,\, \tau \,\,} \m C \widetilde{Q}' \xrightarrow[\sim]{\,\, \tau'' \,\,} \m C\langle x_1^{\pm 1}, y_1^{\pm 1}, \ldots, x_g^{\pm 1},y_g^{\pm 1} \rangle \,$$
    induces the isomorphism
    $$\r{Jac}(\widetilde{Q}, W, E) \xrightarrow[\sim]{\,\, \widetilde{\tau}' \,\,} \m C[\pi_1(\Sigma_g)]$$
    from \Cref{prop1.02}, where
    $$\tau(a) =
    \begin{cases}
    \, a \,\, & \r{if} \,\, a \in Q'_1,\\
    \, p_h \,\, & \r{if} \,\, a = h \in Q_1 \setminus Q'_1 \,.
    \end{cases}$$
\end{enumerate}
\end{prop}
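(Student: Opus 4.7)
My plan is to construct the subquiver $Q'$ by iteratively eliminating arrows using the cut relations, and then to apply \Cref{lem3.411} to identify the resulting presentation with the standard one.

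Step one is an Euler-characteristic count. The existence of $E$ forces $\Delta$ to admit a dimer, so $\Delta$ has an equal number $|E|$ of black and white vertices, giving $V_\Delta = 2|E|$. Combined with $V_\Delta - E_\Delta + F_\Delta = 2 - 2g$ and the identifications $|Q_{\Delta,0}| = F_\Delta$, $|Q_{\Delta,1}| = E_\Delta$, and $|T| = F_\Delta - 1$, this gives
\[ |Q_1| = E_\Delta - |E| - (F_\Delta - 1) = 2g + |E| - 1. \]
Hence any subquiver obtained by removing exactly $|E| - 1$ arrows will satisfy (ii).

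Step two is the construction of $Q'$ and the substitution data. For each $e \in E$ let $v_e$ and $u_e$ denote the white and black vertices of $\Delta$ adjacent to $e$; since $E$ is a cut we have
\[ \frac{\partial W}{\partial e} = c_{v_e}' - c_{u_e}', \]
where $c_{v_e}'$ and $c_{u_e}'$ are the minimal cycles around $v_e$ and $u_e$ in $Q_\Delta$ with $e$ removed and all $T$-arrows replaced by constants. I would fix some $e_0 \in E$ and, for each remaining $e \in E \setminus \{e_0\}$, pick an arrow $h_e$ appearing in (say) $c_{v_e}'$, so that $\partial W/\partial e = 0$ rewrites in $\m C \widetilde{Q}$ as $h_e = p_{h_e}$ with $p_{h_e}$ a word in the remaining arrows. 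Setting $Q'_1 := Q_1 \setminus \{h_e : e \neq e_0\}$ and substituting iteratively to rewrite each $p_{h_e}$ as a word in $\m C \widetilde{Q}'$ then establishes both (i) and (ii).

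The main obstacle is the combinatorial step: choosing the assignment $e \mapsto h_e$ injectively and in an order for which iterative substitution terminates inside $\m C \widetilde{Q}'$. The natural approach is Hall's marriage theorem applied to the bipartite graph on $(E \setminus \{e_0\}) \sqcup Q_1$ with edges given by arrow-incidence in $c_{v_e}' \cup c_{u_e}'$; verifying Hall's condition should ultimately rest on the connectedness of $\Sigma_g$ together with the fact that any $k$ of the cycles $c_{v_e}' \cup c_{u_e}'$ with $e \neq e_0$ collectively touch at least $k$ distinct arrows of $Q_1$, which follows from the tree-like structure imposed by having contracted the spanning tree $T$. As a back-up, one can inductively peel off pairs $(v_e, u_e)$ along a spanning structure built around the dimer.

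Finally, for (iii), once (i) and (ii) hold, $\tau$ induces an isomorphism $\r{Jac}(\widetilde{Q}, W, E) \cong \m C \widetilde{Q}' / (\overline{\partial W / \partial e_0})$, and combining with \eqref{eq:2.001} puts us in the hypothesis of \Cref{lem3.411} with $\lambda = \overline{\partial W/\partial e_0}$ and $\lambda' = \prod_{i=1}^g x_i y_i x_i^{-1} y_i^{-1}$. Any bijection between $Q'_1$ and $\{x_i, y_i\}_{i=1}^g$ yields an isomorphism $\epsilon$ of free groups; \Cref{lem3.411} then tells us $\tau''(\lambda')$ is conjugate to $\lambda^{\pm 1}$, and a further adjustment of $\epsilon$ (by conjugation and possibly inversion) promotes $\tau''$ to descend to the isomorphism $\widetilde{\tau}'$ matching \Cref{prop1.02}.
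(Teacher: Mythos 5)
Your reduction of (i)--(ii) to choosing $|E|-1$ arrows $h_e$, one per relation with $e \neq e_0$, is the right shape, and your Euler-characteristic count $|Q_1| = 2g + |E| - 1$ is a correct and more elementary route to (ii) than the paper's (which identifies $G' = \langle Q'_1 \mid p_0q_0^{-1}\rangle$ with $\pi_1(\Sigma_g)$ and cites a rank result for one-relator groups). However, the step you yourself flag as the main obstacle is a genuine gap, and Hall's marriage theorem is not the right tool to close it: a system of distinct representatives only gives injectivity of $e \mapsto h_e$, not the well-foundedness that the iterative substitution needs. If the dependencies are circular --- say $\partial W/\partial e_1$ solves to $h_1 = a\,h_2$ and $\partial W/\partial e_2$ to $h_2 = b\,h_1$ with $a,b \in \m C\widetilde{Q}'$ --- then the ideal $(h_1 - a h_2,\, h_2 - b h_1)$ cannot be rewritten in the form $(h_1 - p_1,\, h_2 - p_2)$ with $p_1, p_2 \in \m C\widetilde{Q}'$, so \eqref{eq:5.1} would fail for that choice of representatives. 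The paper's device is to extend the dimer $\widehat{E}$ to a spanning tree $\widehat{F}$ of the tiling graph, take $H$ to be the arrows dual to $\widehat{F}\setminus\widehat{E}$, pair each $h\in H$ with an adjacent $e \in E$ inside the tree, and induct on a ``route length'' measured along $\widehat{F}$; the tree structure gives injectivity and termination of the rewriting simultaneously. Your back-up remark about ``a spanning structure built around the dimer'' gestures at exactly this, but it is the part that must be made precise, and your Hall's-condition claim is neither verified nor sufficient.

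Part (iii) is also not established by your argument. \Cref{lem3.411} takes as a hypothesis a commuting square in which the top map is induced from an isomorphism of free groups and descends to the given isomorphism of the one-relator quotients; that commuting lift is precisely what (iii) asserts exists, so invoking the lemma with an arbitrary bijection $Q'_1 \leftrightarrow \{x_i,y_i\}$ is circular, and an arbitrary bijection-induced $\tau''$ will in general not descend to the quotients at all. Nor can ``adjustment by conjugation and possibly inversion'' bridge the gap: what is needed is an automorphism of the free group $F_{2g}$ carrying the generating set $\{\xi(a) : a \in Q'_1\}$ of $\pi_1(\Sigma_g)$ to the standard generators, and such automorphisms are not generated by conjugations and inversions of a fixed identification. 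The paper proves (iii) in two steps: it first uses the geometric description of the isomorphism of \Cref{prop1.02} in [\cite{dav1}] to show that $\widetilde{\tau}''$ is induced by a group isomorphism $\xi : G' \to \pi_1(\Sigma_g)$, and then invokes the nontrivial fact that any two markings of $\pi_1(\Sigma_g)$ of the same size are Nielsen equivalent ([\cite{ll} Theorem 1.1]) to produce the free-group automorphism $\epsilon$, hence $\tau''$ compatible with $\widetilde{\tau}'$. (\Cref{lem3.411} is then applied later, in the proof of \Cref{prop1.09}, once the commuting square from (iii) is available.)
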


\begin{proof}
For an arrow $a \in Q_{\Delta, 1}$ let $\widehat{a} \in \Delta$ denote its dual edge. Let
$$\widehat{E} =  \big\{ \widehat{e} \in \Delta \,\, | \,\, e \in E \big\}$$
be the dimer corresponding to the cut $E$, i.e. the set of edges in the brane tiling which are dual to the arrows in $E$. Extend $\widehat{E}$ to a maximal tree $\widehat{F}$ inside $\Delta$. Let $\widehat{H}$ denote the set of edges in the complement $\widehat{F} \setminus \widehat{E}$ and let $r = |\widehat{H}|$. Note that $r=|E|-1$, since to extend $|\widehat{E}| = |E|$ disjoint edges that touch every vertex into a maximal tree we require $|E|-1$ additional edges. Let 
$$H =  \big\{ h \in Q_1 \,\, | \,\, \widehat{h} \in \widehat{H} \big\}.$$
then take $Q' \subset Q$ to be the quiver with arrow set $Q'_1 = Q_1 \setminus H$.

Let us consider what a relation $\partial W/\partial e$ for an arrow $e \in E$ means. Since $E$ is a cut we have that $\partial W/\partial e$ identifies two paths in $Q$. For any $h \in H$ there exists some $e \in E$ (specifically $e$ is dual to $\widehat{e}$ where $\widehat{e}$ and $\widehat{h}$ are connected in $\Delta$) such that we can use the relation $\partial W/ \partial e$ to identify $h$ with a path in $\m C(\widetilde{Q \setminus \{h\}})$. As $\widehat{H}$ turns $\widehat{E}$ into a tree for each $h \in H$ we can choose a distinct $e \in E$ giving pairs $(h,e)$ such that $\partial W/\partial e$ allows us to identify $h$ with a path in $\m C(\widetilde{Q \setminus \{h\}})$, and since $|H| = r = |E| -1$ we will always have one extra arrow $e_0 \in E$ that has not been paired with any $h \in H$. The edges on the ends of the tree $\widehat{F}$ must be in $\widehat{E}$, so if we take $\widehat{e}_1$ to be an edge on the end of $\widehat{F}$ such that it is only connected to one $\widehat{h}_1 \in \widehat{H}$ we get that the relation $\partial W/\partial e_1$ identifies a path in $\m C(Q \setminus H)$ with a path in $\m C Q$, and so in particular we can use $\partial W/\partial e_1$ to identify $h_1$ with a path in $\m C \widetilde{Q}'$. This can be repeated for all $h \in H$ by following the edge $\widehat{h}$ to the ends of the tree $\widehat{F}$ and utilising the relations $\partial W/\partial e$ for any $\widehat{e}$ we encounter along this route (see \eqref{pic:1.1}, \eqref{pic:1.2} and \eqref{pic:1.3} for an illustration of this).

To be more precise we go by induction. First fix pairs $(h_i, e_i)$ where $\widehat{e}_i$ is connected to $\widehat{h}_i$ in $\Delta$, for $ i=1, \ldots, r$. As $|E|=r+1 =|H|+1$ we have one remaining element of the cut which we shall denote by $e_0$. Define the \i{route length} of $h_i$ to be 1 plus the sum of the route lengths of $h_j$ where $\widehat{e}_i$ is connected to $\widehat{h}_j$ for $i \neq j$ (see \eqref{pic:1.3}). Note that there must exist some $h_i$ of route length 1 as otherwise this means that every $\widehat{e} \in \widehat{E}$ is connected to at least two $\widehat{h} \in \widehat{H}$, but as $\widehat{E}$ and $\widehat{H}$ together form a tree this would mean that $|\widehat{H}|>|\widehat{E}|$ which is a contradiction. We go by induction on the route length of the $h_i$. For the base case suppose that $h_i$ has route length 1. Then up to a sign we have that
$$\frac{\partial W}{\partial e_i} = q'_i h_i q''_i - p'_i$$
for paths $p'_i,\, q'_i,\, q''_i$ in $Q'$, since from the definition of the potential $W$ the route length of $h_i$ being 1 implies that no other arrow $h \in H$ appears in $\partial W/\partial e_i$. Therefore in $\m C \widetilde{Q}$ we can write
$$h_i = p_i + q_i^{\prime -1}\left(\frac{\partial W}{\partial e_i} \right) q_i^{\prime \prime -1}$$
where $p_i = q_i^{\prime -1} p_i' q_i^{\prime \prime -1}$ is a path in $\m C \widetilde{Q}'$, and
$$\frac{\partial W}{\partial e_i} = q'_i \big(h_i - p_i \big) q''_i$$
as required. So now suppose it is true that for all $h_j$ of route length at most $m$ we have the equality of ideals
$$I_m := \Big(h_j - p_j \,:\,\, \r{route length}(h_j) \leq m \Big)  = \Bigg(\frac{\partial W}{\partial e_k} \,:\,\, \r{$e_k$ paired with $h_k$ for route length}(h_k) \leq m \Bigg).$$
Then for $h_i$ of route length $m+1$ consider (again up to a sign)
$$\frac{\partial W}{\partial e_i} = q'_i h_i q''_i - p'_i$$
where this time $p'_i,\, q'_i,\, q''_i$ may now contain arrows in $H$ too. However any arrow $h_j \in H$ that appears in either of $p'_i$ or $q'_i$ or $q''_i$ must have route length strictly less than the route length of $h_i$. Hence if we suppose that
$$q_i^{\prime -1} p'_i q_i^{\prime \prime -1} = \prod_j q'_j h_j^{b_j} q''_j$$
where $q'_j$ and $q''_j$ are paths in $\m C \widetilde{Q}'$ and $b_j \in \{-1,1\}$ ($h_j$ can appear at most once in $\partial W/\partial e_i$ due to how $W_\Delta$ was defined), by the induction hypothesis we can write in $\m C \widetilde{Q}/I_m$ (where $\overline{p} \in \m C \widetilde{Q}/I_m$ denotes the image of $p \in \m C \widetilde{Q}$  under the natural quotient map)
$$\overline{h}_i = \prod_j \overline{q}'_j \overline{p}_j^{b_j} \overline{q}''_j + \overline{q}_i^{\prime -1} \left(\overline{\frac{\partial W}{\partial e_i}} \right) \overline{q}_i^{\prime \prime -1}$$
with $\prod_j q'_j p_j^{b_j} q''_j = p_i$ a path in $\m C \widetilde{Q}'$, and
$$\overline{\frac{\partial W}{\partial e_i}} = \overline{q}'_i \Big(\overline{h}_i - \overline{p}_i \Big) \overline{q}''_i.$$ It follows that that
$$\frac{\m C \widetilde{Q}}{\big(h_i - p_i, I_m \big)} = \frac{\m C \widetilde{Q}}{\big(\partial W/\partial e_i, I_m \big)}$$
and hence as ideals in $\m C \widetilde{Q}$
$$\big(h_i - p_i, I_m \big) = \left(\frac{\partial W}{\partial e_i}, I_m \right)$$
as required for i).

This implies the following isomorphism of algebras
$$\r{Jac}(\widetilde{Q},W,E) = \frac{\m C \widetilde{Q}}{\big(\partial W/\partial e : e \in E \big)} \xrightarrow[\sim]{\quad \widetilde{\tau} \quad} \frac{\m C \widetilde{Q}'}{\big(\tau(\partial W/\partial e_0) \big)}$$
where $\tau: \m C \widetilde{Q} \longrightarrow \m C \widetilde{Q}'$ is the surjective homomorphism 
$$a \in Q_1 \mapsto
\begin{cases}
\, a \,\, & \r{if} \,\, a \in Q'_1,\\
\, p_i \,\, & \r{if} \,\, a = h_i \in H
\end{cases}$$
and recall $e_0 \in E$ is the remaining element of the cut not paired with any $h \in H$. Let 
$$\tau\left(\frac{\partial W}{\partial e_0} \right) = p_0 - q_0$$
then write
\begin{align}
    \frac{\m C \widetilde{Q}'}{\big(\tau(\partial W/\partial e_0) \big)} = \m C[G'] \label{eq:5.2}
\end{align}
where $G'$ is the group with presentation $\langle a \in Q'_1 \,\, |\,\, p_0 q_0^{-1} \rangle$. We get the following diagram of algebras in which the left-hand square commutes
\begin{equation}\label{fig10.45}
\begin{tikzcd}[column sep = 6em, row sep = 5em]
\m C \widetilde{Q} \arrow[r, two heads, "\tau"] \arrow[d, two heads, "p"] & \m C \widetilde{Q}' \arrow[r, dashed, "\tau''", "\sim"'] \arrow[d, two heads, "q'"] & \m C\langle x_1^{\pm 1}, y_1^{\pm 1}, \ldots, x_g^{\pm 1},y_g^{\pm 1} \rangle \arrow[d, two heads, "q"]\\
\r{Jac}(\widetilde{Q}, W, E) \arrow[r, "\widetilde{\tau}", "\sim"'] & \m C[G'] \arrow[r,"\widetilde{\tau}''", "\sim"'] & \m C[\pi_1(\Sigma_g)]
\end{tikzcd}
\end{equation}
where $\widetilde{\tau}''$ is chosen such that $\widetilde{\tau}'' \circ \widetilde{\tau} =  \widetilde{\tau}'$ is the isomorphism given in \Cref{prop1.02}, and the vertical maps are the quotients. We aim to find an isomorphism $\tau''$ which makes the right-hand square commute.

We first note that the description of the isomorphism $\widetilde{\tau}': \r{Jac}(\widetilde{Q},W,E) \xrightarrow{\sim} \m C[\pi_1(\Sigma_g)]$ from [\cite{dav1} Sections 4 and 5] tells us that $\widetilde{\tau}'$ is induced from a homomorphism of groups
$$G_{Q,W,E} \longrightarrow \pi_1(\Sigma_g)$$
where $G_{Q,W,E}$ is the group with presentation
$$\langle a \in Q_1 \,\,|\,\, p_e q_e^{-1} \,:\, e \in E \rangle$$
for $\partial W/\partial e = p_e - q_e$. Indeed the quiver $Q$ embeds into the Riemann surface $\Sigma_g$ as explained in \cite{dav1} and the multiplication of paths in the quiver $Q$ (which are in fact loops as $Q$ only has one vertex) corresponds to concatenation of loops in the surface. The relations from the cut and potential then correspond to the relation in $\pi_1(\Sigma_g)$. This implies that the isomorphism $\widetilde{\tau}'': \m C[G'] \xrightarrow{\sim} \m C[\pi_1(\Sigma_g)]$ is induced from a homomorphism of groups 
$$G' \xrightarrow{\,\, \xi \,\,} \pi_1(\Sigma_g)$$
since $\widetilde{\tau}$ is induced from the isomorphism of groups
$$G_{Q,W,E} \xrightarrow{\,\, \sim \,\,} G'$$
that sends $a \in Q_1 \setminus H \mapsto a$ and $h \in H \mapsto p_h$.

We claim that the group homomorphism $\xi$ is an isomorphism. Suppose $\xi$ is not surjective, then there exists some $y \in \pi_1(\Sigma_g)$ not in the image of $\xi$. An arbitrary element of $\m C[G']$ is of the form $x=\sum_{i=1}^m c_i x_i$ and therefore
$$\widetilde{\tau}''(x) = \sum_{i=1}^m c_i\, \xi(x_i).$$
As $\widetilde{\tau''}$ is surjective suppose that $\widetilde{\tau}''(x) = y$. Hence for at least one $i$ we must have that $\xi(x_i)=y$ giving a contradiction to the assumption that $y$ does not lie in the image of $\xi$. Now suppose $\xi$ is not injective. Then there exists an $x \in G'$ such that $\xi(x)$ is null-homotopic. But then $\widetilde{\tau}''(x) = \xi(x)$ is null-homotopic and hence equal to $1$ in the fundamental group algebra $\m C[\pi_1(\Sigma_g)]$ which gives a contradiction to the injectivity of $\widetilde{\tau}''$. 

So we have an isomorphism of one-relator groups $G' \xrightarrow[\sim]{\,\, \xi \,\,} \pi_1(\Sigma_g)$ and hence [\cite{ls} Proposition 5.11] tells us that the number of generators of $G'$ must be equal to $2g$, i.e. $|Q'_1| = 2g$ as required by ii).

Finally we show the existence of the isomorphism $\tau''$. Let $F(z_1, \ldots, z_m)$ be the free group on the generators $z_i$. Define a \emph{marking} of $\pi_1(\Sigma_g)$ of size $m$ to be a surjective group homomorphism $\pi: F(z_1, \ldots, z_m) \rightarrow \pi_1(\Sigma_g)$, hence a marking of size $m$ is equivalent to a set $\{\alpha_1, \ldots, \alpha_m\} \subset \pi_1(\Sigma_g)$ of generators for $\pi_1(\Sigma_g)$. Then [\cite{ll} Theorem 1.1] says that any two markings $\pi$ and $\pi'$ of $\pi_1(\Sigma_g)$ of the same size are Nielsen equivalent, i.e. that there exists an automorphism $\epsilon$ of the free group $F(z_1, \ldots, z_m)$ such that the following diagram commutes
\[\begin{tikzcd}[column sep = 6em, row sep = 6em]
F(z_1, \ldots, z_m) \arrow[rd, "\pi"] \arrow[r, "\epsilon", "\sim"'] & F(z_1, \ldots, z_m) \arrow[d, "\pi'"]\\
& \pi_1(\Sigma_g)
\end{tikzcd}\]
In our case we take the generators $\{\xi(a) : a \in Q'_1 \}$ to define the marking $\pi$ of size $2g$ and the standard generators $\{x_1, y_1, \ldots, x_g, y_g\}$ to define $\pi'$, giving us the automorphism $\epsilon$. Upgrading $\epsilon$ into an isomorphism of group algebras provides us with $\tau''$. 
\end{proof}

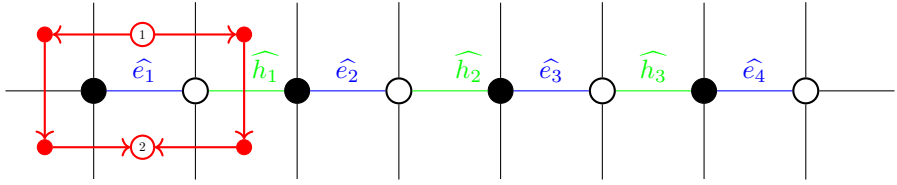
\begin{figure}
    \centering
\begin{tikzpicture}
[black/.style={circle, draw=black!120, fill=black!120, thin, minimum size=3mm},
white/.style={circle, draw=black!120, thick, minimum size=3mm},
empty/.style={circle, draw=black!0, thin, minimum size=0.1mm},
red/.style={circle, draw=red!120, thick, scale=0.5},
redsmall/.style={circle, draw=red!120, fill=red!120, thin, scale=0.6},]

\node[black] (1) {};
\node[white] (2) [right=of 1] {};
\node[black] (3) [right=of 2] {};
\node[white] (4) [right=of 3] {};
\node[black] (5) [right=of 4] {};
\node[white] (6) [right=of 5] {};
\node[black] (7) [right=of 6] {};
\node[white] (8) [right=of 7] {};

\node[empty] (01) [above=of 1] {};
\node[empty] (02) [above=of 2] {};
\node[empty] (03) [above=of 3] {};
\node[empty] (04) [above=of 4] {};
\node[empty] (05) [above=of 5] {};
\node[empty] (06) [above=of 6] {};
\node[empty] (07) [above=of 7] {};
\node[empty] (08) [above=of 8] {};

\node[empty] (11) [below=of 1] {};
\node[empty] (12) [below=of 2] {};
\node[empty] (13) [below=of 3] {};
\node[empty] (14) [below=of 4] {};
\node[empty] (15) [below=of 5] {};
\node[empty] (16) [below=of 6] {};
\node[empty] (17) [below=of 7] {};
\node[empty] (18) [below=of 8] {};

\node[empty] (0) [left=of 1] {};
\node[empty] (9) [right=of 8] {};

\node[red] (051) at (0.65, 0.75) {1};
\node[red] (151) at (0.65, -0.75) {2};
\node[redsmall] (050) at (-0.65, 0.75) {};
\node[redsmall] (150) at (-0.65, -0.75) {};
\node[redsmall] (052) at (2.0, 0.75) {};
\node[redsmall] (152) at (2.0, -0.75) {};

\draw[blue, -] (1.east) -- (2.west) node [above, midway] {$\widehat{e_1}$};
\draw[green, -] (2.east) -- (3.west) node [above, near end] {$\widehat{h_1}$};
\draw[blue, -] (3.east) -- (4.west) node [above, midway] {$\widehat{e_2}$};
\draw[green, -] (4.east) -- (5.west) node [above, near end] {$\widehat{h_2}$};
\draw[blue, -] (5.east) -- (6.west) node [above, midway] {$\widehat{e_3}$};
\draw[green, -] (6.east) -- (7.west) node [above, midway] {$\widehat{h_3}$};
\draw[blue, -] (7.east) -- (8.west) node [above, midway] {$\widehat{e_4}$};

\draw[-] (1.north) -- (01.south);
\draw[-] (2.north) -- (02.south);
\draw[-] (3.north) -- (03.south);
\draw[-] (4.north) -- (04.south);
\draw[-] (5.north) -- (05.south);
\draw[-] (6.north) -- (06.south);
\draw[-] (7.north) -- (07.south);
\draw[-] (8.north) -- (08.south);
\draw[-] (1.south) -- (11.north);
\draw[-] (2.south) -- (12.north);
\draw[-] (3.south) -- (13.north);
\draw[-] (4.south) -- (14.north);
\draw[-] (5.south) -- (15.north);
\draw[-] (6.south) -- (16.north);
\draw[-] (7.south) -- (17.north);
\draw[-] (8.south) -- (18.north);
\draw[-] (1.west) -- (0.east);
\draw[-] (8.east) -- (9.west);

\draw[red, ->] (051.west) -- (050.east);
\draw[red, ->] (051.east) -- (052.west);
\draw[red, ->] (050.south) -- (150.north);
\draw[red, ->] (150.east) -- (151.west);
\draw[red, ->] (052.south) -- (152.north);
\draw[red, ->] (152.west) -- (151.east);
\end{tikzpicture}
    \caption{An example of a brane tiling with black and white vertices, with the edges in the dimer $\widehat{E}$ in blue and the edges in $\widehat{H}$ in green. The tree $\widehat{F}$ then consists of all the blue and green edges and so its ends are $\widehat{e}_1$ and $\widehat{e}_4$. For illustrative purposes we do not contract the tree $T$ in $Q_\Delta$ and so our quivers will have more than one vertex. We can see that the relation $\partial W/\partial e_1$ identifies the two paths in the quiver (in red) that go from vertex $1$ to vertex $2$ in $Q_\Delta$. Since $\widehat{e_1}$ is at an end of the tree one path does not contain any arrows which are dual to edges in $\widehat{H}$.}
    \label{pic:1.1}
\end{figure}

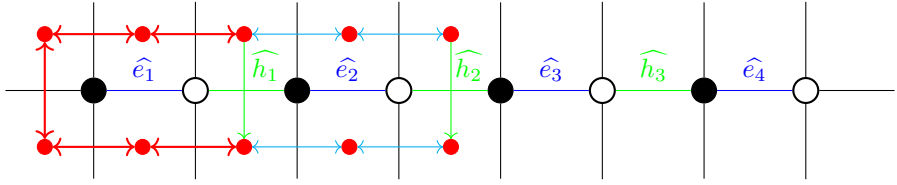
\begin{figure}
    \centering
\begin{tikzpicture}
[black/.style={circle, draw=black!120, fill=black!120, thin, minimum size=3mm},
white/.style={circle, draw=black!120, thick, minimum size=3mm},
empty/.style={circle, draw=black!0, thin, minimum size=0.1mm},
red/.style={circle, draw=red!120, thick, scale=0.5},
redsmall/.style={circle, draw=red!120, fill=red!120, thin, scale=0.6},]

\node[black] (1) {};
\node[white] (2) [right=of 1] {};
\node[black] (3) [right=of 2] {};
\node[white] (4) [right=of 3] {};
\node[black] (5) [right=of 4] {};
\node[white] (6) [right=of 5] {};
\node[black] (7) [right=of 6] {};
\node[white] (8) [right=of 7] {};

\node[empty] (01) [above=of 1] {};
\node[empty] (02) [above=of 2] {};
\node[empty] (03) [above=of 3] {};
\node[empty] (04) [above=of 4] {};
\node[empty] (05) [above=of 5] {};
\node[empty] (06) [above=of 6] {};
\node[empty] (07) [above=of 7] {};
\node[empty] (08) [above=of 8] {};

\node[empty] (11) [below=of 1] {};
\node[empty] (12) [below=of 2] {};
\node[empty] (13) [below=of 3] {};
\node[empty] (14) [below=of 4] {};
\node[empty] (15) [below=of 5] {};
\node[empty] (16) [below=of 6] {};
\node[empty] (17) [below=of 7] {};
\node[empty] (18) [below=of 8] {};

\node[empty] (0) [left=of 1] {};
\node[empty] (9) [right=of 8] {};

\node[redsmall] (051) at (0.65, 0.75) {};
\node[redsmall] (151) at (0.65, -0.75) {};
\node[redsmall] (050) at (-0.65, 0.75) {};
\node[redsmall] (150) at (-0.65, -0.75) {};
\node[redsmall] (052) at (2.0, 0.75) {};
\node[redsmall] (152) at (2.0, -0.75) {};
\node[redsmall] (053) at (3.4, 0.75) {};
\node[redsmall] (153) at (3.4, -0.75) {};
\node[redsmall] (054) at (4.75, 0.75) {};
\node[redsmall] (154) at (4.75, -0.75) {};

\draw[blue, -] (1.east) -- (2.west) node [above, midway] {$\widehat{e_1}$};
\draw[green, -] (2.east) -- (3.west) node [above, near end] {$\widehat{h_1}$};
\draw[blue, -] (3.east) -- (4.west) node [above, midway] {$\widehat{e_2}$};
\draw[green, -] (4.east) -- (5.west) node [above, near end] {$\widehat{h_2}$};
\draw[blue, -] (5.east) -- (6.west) node [above, midway] {$\widehat{e_3}$};
\draw[green, -] (6.east) -- (7.west) node [above, midway] {$\widehat{h_3}$};
\draw[blue, -] (7.east) -- (8.west) node [above, midway] {$\widehat{e_4}$};

\draw[-] (1.north) -- (01.south);
\draw[-] (2.north) -- (02.south);
\draw[-] (3.north) -- (03.south);
\draw[-] (4.north) -- (04.south);
\draw[-] (5.north) -- (05.south);
\draw[-] (6.north) -- (06.south);
\draw[-] (7.north) -- (07.south);
\draw[-] (8.north) -- (08.south);
\draw[-] (1.south) -- (11.north);
\draw[-] (2.south) -- (12.north);
\draw[-] (3.south) -- (13.north);
\draw[-] (4.south) -- (14.north);
\draw[-] (5.south) -- (15.north);
\draw[-] (6.south) -- (16.north);
\draw[-] (7.south) -- (17.north);
\draw[-] (8.south) -- (18.north);
\draw[-] (1.west) -- (0.east);
\draw[-] (8.east) -- (9.west);

\draw[red, <->] (051.west) -- (050.east);
\draw[red, <->] (051.east) -- (052.west);
\draw[red, <->] (050.south) -- (150.north);
\draw[red, <->] (150.east) -- (151.west);
\draw[green, ->] (052.south) -- (152.north);
\draw[red, <->] (152.west) -- (151.east);

\draw[cyan, <->] (053.west) -- (052.east);
\draw[cyan, <->] (053.east) -- (054.west);
\draw[cyan, <->] (152.east) -- (153.west);
\draw[green, ->] (054.south) -- (154.north);
\draw[cyan, <->] (154.west) -- (153.east);
\end{tikzpicture}
    \caption{By adding inverses to the arrows in $Q_\Delta$ we can use the relation $\partial W/ \partial e_1$ to write the arrow in the quiver dual to $\widehat{h_1}$ (in green) in terms of the red path and so in particular as a path in $\m C \widetilde{Q}'_\Delta = \m C(\widetilde{Q_{\Delta} \setminus H})$. Similarly for the arrow in the quiver dual to $\widehat{h_2}$ (also in green), by following the edge $\widehat{h_2}$ to an end of the tree in the brane tiling (here we go to the left and end up at $\widehat{e_1}$) we can also write it as a path in $\m C \widetilde{Q}'_{\Delta}$; first by using $\partial W/ \partial e_2$ we can write $h_2$ in terms of the cyan arrows and $h_1$, then using $\partial W/ \partial e_1$ we can write it as the concatenation of the cyan path along the top, then the red path, then the cyan path along the bottom.}
    \label{pic:1.2}
\end{figure}

\begin{figure}
    \centering
\begin{tikzpicture}
[black/.style={circle, draw=black!120, fill=black!120, thin, minimum size=3mm},
white/.style={circle, draw=black!120, thick, minimum size=3mm},
empty/.style={circle, draw=black!0, thin, minimum size=0.1mm},
red/.style={circle, draw=red!120, thick, scale=0.5},
redsmall/.style={circle, draw=red!120, fill=red!120, thin, scale=0.6},]

\node[black] (1) {};
\node[white] (2) [right=of 1] {};
\node[black] (3) [right=of 2] {};
\node[white] (4) [right=of 3] {};
\node[black] (5) [right=of 4] {};
\node[white] (6) [right=of 5] {};
\node[black] (7) [right=of 6] {};
\node[white] (8) [right=of 7] {};

\node[empty] (01) [above=of 1] {};
\node[black] (02) [above=of 2] {};
\node[white] (03) [above=of 3] {};
\node[empty] (04) [above=of 4] {};
\node[empty] (05) [above=of 5] {};
\node[empty] (06) [above=of 6] {};
\node[empty] (07) [above=of 7] {};
\node[empty] (08) [above=of 8] {};

\node[empty] (11) [below=of 1] {};
\node[empty] (12) [below=of 2] {};
\node[empty] (13) [below=of 3] {};
\node[empty] (14) [below=of 4] {};
\node[empty] (15) [below=of 5] {};
\node[empty] (16) [below=of 6] {};
\node[empty] (17) [below=of 7] {};
\node[empty] (18) [below=of 8] {};
\node[empty] (002) [above=of 02] {};
\node[empty] (003) [above=of 03] {};

\node[empty] (0) [left=of 1] {};
\node[empty] (9) [right=of 8] {};

\node[redsmall] (051) at (0.65, 0.75) {};
\node[redsmall] (151) at (0.65, -0.75) {};
\node[redsmall] (050) at (-0.65, 0.75) {};
\node[redsmall] (150) at (-0.65, -0.75) {};
\node[redsmall] (052) at (2.0, 0.75) {};
\node[redsmall] (152) at (2.0, -0.75) {};
\node[redsmall] (053) at (3.4, 0.75) {};
\node[redsmall] (153) at (3.4, -0.75) {};
\node[redsmall] (054) at (4.75, 0.75) {};
\node[redsmall] (154) at (4.75, -0.75) {};
\node[redsmall] (0051) at (0.65, 2.05) {};
\node[redsmall] (0052) at (2.0, 2.05) {};
\node[redsmall] (0053) at (3.4, 2.05) {};

\draw[blue, -] (1.east) -- (2.west) node [above, midway] {$\widehat{e_1}$};
\draw[green, -] (2.east) -- (3.west) node [above, midway] {$\widehat{h_1}$};
\draw[blue, -] (3.east) -- (4.west) node [above, midway] {$\widehat{e_2}$};
\draw[green, -] (4.east) -- (5.west) node [above, near end] {$\widehat{h_2}$};
\draw[blue, -] (5.east) -- (6.west) node [above, midway] {$\widehat{e_3}$};
\draw[green, -] (6.east) -- (7.west) node [above, midway] {$\widehat{h_3}$};
\draw[blue, -] (7.east) -- (8.west) node [above, midway] {$\widehat{e_5}$};
\draw[blue, -] (02.east) -- (03.west) node [above, midway] {$\widehat{e_4}$};
\draw[green, -] (3.north) -- (03.south) node [right, midway] {$\widehat{h_4}$};

\draw[-] (1.north) -- (01.south);
\draw[-] (2.north) -- (02.south);
\draw[-] (4.north) -- (04.south);
\draw[-] (5.north) -- (05.south);
\draw[-] (6.north) -- (06.south);
\draw[-] (7.north) -- (07.south);
\draw[-] (8.north) -- (08.south);
\draw[-] (1.south) -- (11.north);
\draw[-] (2.south) -- (12.north);
\draw[-] (3.south) -- (13.north);
\draw[-] (4.south) -- (14.north);
\draw[-] (5.south) -- (15.north);
\draw[-] (6.south) -- (16.north);
\draw[-] (7.south) -- (17.north);
\draw[-] (8.south) -- (18.north);
\draw[-] (1.west) -- (0.east);
\draw[-] (8.east) -- (9.west);
\draw[-] (01.east) -- (02.west);
\draw[-] (03.east) -- (04.west);
\draw[-] (02.north) -- (002.south);
\draw[-] (03.north) -- (003.south);

\draw[red, ->] (051.west) -- (050.east);
\draw[red, ->] (050.south) -- (150.north);
\draw[red, ->] (150.east) -- (151.west);
\draw[red, <-] (152.west) -- (151.east);
\draw[red, <-] (053.east) -- (054.west);
\draw[red, ->] (152.east) -- (153.west);
\draw[red, ->] (153.east) -- (154.west);
\draw[green, ->] (054.south) -- (154.north);
\draw[red, ->] (053.north) -- (0053.south);
\draw[red, ->] (0053.west) -- (0052.east);
\draw[red, ->] (0052.west) -- (0051.east);
\draw[red, ->] (0051.south) -- (051.north);
\end{tikzpicture}
    \caption{Fixing the pairs $(e_1,h_1),\, (e_2,h_2),\, (e_3,h_3),\, (e_4,h_4)$ we get that $\r{route length}(h_1) = 1$, $\r{route length}(h_2) = 3$, $\r{route length}(h_3) = 4$, $\r{route length}(h_4) = 1$. Route length 1 arrows in $H$ exist as those paired to arrows $e \in E$ with the property that $\widehat{e}$ only connects to a single $\widehat{h} \in \widehat{H}$. We can use the relations $\partial W/\partial e_j$, where $\widehat{e}_j$ is on the route from $\widehat{h}_i$ to an end of the tree in the brane tiling, to write $h_i$ as a path in $\m C \widetilde{Q}'_\Delta$ e.g. we see that the arrow $h_2$ in green is equal to the red path using $\partial W/\partial e_2$ then $\partial W/\partial e_1$ and $\partial W/\partial e_4$.}
    \label{pic:1.3}
\end{figure}
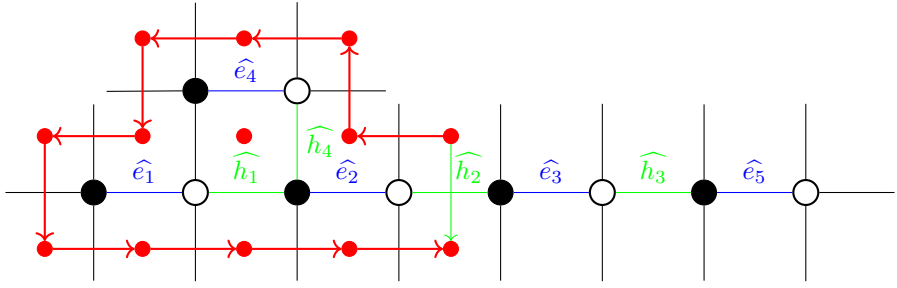

\newpage
We are now finally in a position to begin applying the results from Section 4 to the character variety. We fix the brane tiling $\Delta$ on our Riemann surface $\Sigma_g$, as well as the cut $E$, maximal tree $T$, and subquiver $Q' \subset Q$ as in \Cref{lem3.412}. Recall that $|E| = r+1$, $|Q'_1| = 2g$ and $|Q_1| = 2g+r$. Let $e_i \in E$ for $i=0, \ldots, r$. Also note that
\begin{align*}
    M_n &= M_n(\m C \widetilde{Q}) = \r{GL}_n^{2g+r}\\
    Y'_{m,n} &\cong M_m \times M_n \times \r{Mat}_{m \times n}^{|E|}\\
    &= \r{GL}_m^{2g+r} \times \r{GL}_n^{2g+r} \times \r{Mat}_{m \times n}^{r+1}\\
    &= \big(\r{GL}_m^{2g} \times \r{GL}_n^{2g} \times \r{Mat}_{m \times n} \big) \times \big(\r{GL}_m^r \times \r{GL}_n^r \times \r{Mat}_{m \times n}^r \big)\\
    &\cong Y_{m,n} \times \r{GL}_{m,n}^r\\
    M_{m,n} &= M_{m,n}(\m C \widetilde{Q}) = \r{GL}_{m,n}^{2g+r}\\
    &= \r{GL}_{m,n}^{2g} \times \r{GL}_{m,n}^{r}\\
    &= M_{m,n}(\m C \widetilde{Q}') \times \r{GL}_{m,n}^{r}.
\end{align*}
It will be much more convenient to view $Y_{m,n}$ as $\r{GL}_m^{2g} \times \r{GL}_n^{2g} \times \r{Mat}_{m \times n}$ and $Y'_{m,n}$ as $M_m \times M_n \times \r{Mat}_{m \times n}^{r+1}$. We shall index matrices in the $M_{m,n}(\widetilde{Q}') = \r{GL}_{m,n}^{2g}$ part using $a \in Q'_1$ and matrices in the $\r{GL}_{m,n}^r$ part using $h \,\, \r{or} \,\, h_i \in Q_1 \setminus Q'_1$ for $i=1, \ldots, r$. Then recalling our notation for $m,n$-block matrices
$$R =
\begin{pmatrix}
R^{(1)} & R^{(3)}\\
0 & R^{(2)}
\end{pmatrix}$$
we have that in diagrams \eqref{fig:2.1} and \eqref{fig:1} for $(A_i,B_i) \in \r{GL}_{m,n}^{2g} = M_{m,n}(\m C \langle x_1^{\pm 1}, y_1^{\pm 1}, \ldots, x_g^{\pm 1}, y_g^{\pm 1} \rangle)$ and for $\rho \in M_{m,n}$
\begin{align*}
    f(A_i, B_i) &= \Big((A_i^{(1)}, B_i^{(1)}),\, (A_i^{(2)}, B_i^{(2)}),\,\, \lambda_{m+n}(A_i, B_i)^{(3)} \Big)\\
    f''(\rho) &= \bigg(\rho^{(1)},\, \rho^{(2)},\,\, \left(\frac{\partial W}{\partial e_i}(\rho)^{(3)} \right)_{i=1, \ldots, r} \bigg).
\end{align*}
So consider the following diagram whose front face is \eqref{fig:2} and whose back face is \eqref{fig:1}:
\begin{equation}\label{fig:10.5}
\begin{tikzcd}
& M_{m+n} \arrow[hookleftarrow]{dd}[near end]{j'_{m+n}} && M_{m,n} \arrow[hookleftarrow]{dd}[near end]{j'_{m,n}} \arrow[ll, hook', "h'"'] \arrow[rr, "f''"] && Y'_{m,n} \arrow[hookleftarrow]{dd}{i'}\\
\r{GL}_{m+n}^{2g} \arrow[ru, hook, "\varphi'_{m+n}"] && \r{GL}_{m,n}^{2g} \arrow[ru, hook, "\varphi'_{m,n}"] \arrow[ll, hook', "h"' near start, crossing over] \arrow[rr, "f" near end, crossing over] && Y_{m,n} \arrow[ru, hook, "\psi'_{m,n}"]\\
& Z_{m+n} && Z_{m,n} \arrow[ll, hook', "\widetilde{h'}"' near end] \arrow[rr, "\widetilde{f''}" near start] && Z_m \times Z_n\\
V_{m+n} \arrow[uu, hook', "j_{m+n}"] \arrow[ru, "\widetilde{\varphi}'_{m+n}" near end, "\sim"' sloped] && V_{m,n} \arrow[uu, hook', crossing over, "j_{m,n}" near start] \arrow[ru, "\widetilde{\varphi}'_{m,n}"' near end, "\sim" sloped] \arrow[ll, hook', "\widetilde{h}"'] \arrow[rr, "\widetilde{f}" near end] && V_m \times V_n\arrow [uu, hook', crossing over, "i" near start] \arrow[ru, "\widetilde{\psi}'_{m,n} = \widetilde{\varphi}'_m \times \widetilde{\varphi}'_n"' near end, "\sim" sloped]
\end{tikzcd}
\end{equation}
where the inclusion $\varphi'_n: \r{GL}_n^{2g} \rightarrow M_n$ is induced from the algebra homomorphism $\tau'$ given in \Cref{lem3.412}, the inclusion $\psi'_{m,n}: Y_{m,n} \rightarrow Y'_{m,n}$ is given by
$$\big(\rho',\, \rho'',\, R \big) \longmapsto \big(\varphi'_m(\rho'),\, \varphi'_n(\rho''),\, R,\, 0, \ldots, 0 \big)$$
and the isomorphism $\widetilde{\varphi}'_n : V_n \xrightarrow{\sim} Z_n$ is induced from the algebra isomorphism $\widetilde{\tau}'$ from \Cref{prop1.02}.

All the squares that go into the page are Cartesian due to the commutativity of diagram \eqref{fig10.45}. The rest of the squares, except for the top-right square, are all easily checked to be Cartesian as well. The top-right square may not even be commutative in fact, since we have that
\begin{align*}
    f'' \circ \varphi'_{m,n}(A_i, B_i)  &= \left(\varphi'_{m,n}(A_i, B_i)^{(1)},\, \varphi'_{m,n}(A_i, B_i)^{(2)},\, \left(\frac{\partial W}{\partial e_j}(\varphi'_{m,n}(A_i, B_i))^{(3)} \right)\right)\\
    &= \left(\varphi'_m(A_i^{(1)},B_i^{(1)}),\, \varphi'_n(A_i^{(2)}, B_i^{(2)}),\, \left(\frac{\partial W}{\partial e_j}(\varphi'_{m,n}(A_i, B_i))^{(3)} \right)\right)
\end{align*}
whilst
$$\psi_{m,n} \circ f(A_i, B_i) = \Big(\varphi'_m(A_i^{(1)},B_i^{(1)}),\, \varphi'_n(A_i^{(2)}, B_i^{(2)}),\,\, \lambda_{m+n}(A_i,B_i)^{(3)},\, 0, \ldots, 0 \Big)$$
and therefore this square is commutative if and only if (up to a re-ordering of the arrows in $E$) we have $\frac{\partial W}{\partial e_0}(\varphi'_{m,n}(A_i, B_i))^{(3)} = \lambda_{m+n}(A_i,B_i)^{(3)}$ and $\frac{\partial W}{\partial e_j}(\varphi'_{m,n}(A_i, B_i))^{(3)} = 0$ for $j = 1, \ldots, r$. This means that we cannot apply \Cref{cor1.06} to the right-hand cube in \eqref{fig:10.5} because there most likely will not exist an isomorphism $\delta: Y_{m,n}' \xrightarrow{\sim} Y_{m,n}'$ that will make this square Cartesian after replacing $f''$ with $f'= \delta^{-1} \circ f''$.

\begin{eg}\label{eg5.3}
Continuing the example of the brane tiling for the genus 2 surface from \Cref{eg:1.01} and \Cref{eg:1.02}, we took the cut as $E = \{a,b,c\}$ and the maximal tree as $T=\{e,h,k\}$. To extend the dimer $\widehat{E}$ to a maximal tree in the tiling we can take $\widehat{H}=\{\widehat{f}, \widehat{j}\}$ and so $H=\{f,j\}$. Pairing $f$ with $c$ and $j$ with $a$ we can use $\partial W/\partial c$ and $\partial W/\partial a$ to write
\begin{align*}
    f &= i^{-1}l^{-1}id\\
    j &= gdg^{-1}
\end{align*}
as we saw before. Then $e_0 = b$ and $\partial W/\partial b$ becomes
$$li^{-1}l^{-1}idgd^{-1}g^{-1} - 1$$
and so defining $\tau''$ by sending $l \mapsto x_1$, $i^{-1} \mapsto y_1$, $d \mapsto x_2$, $g \mapsto y_2$ gives us the isomorphism and surjection of algebras from \Cref{lem3.412}. It follows that $\varphi'_n$ sends
$$\big(A_1, B_1, A_2, B_2 \big) \longmapsto \big(D= A_2,\,\, F= B_1A_1^{-1}B_1^{-1}A_2,\,\, G=B_2,\,\, I=B_1^{-1},\,\, J= B_2A_2B_2^{-1},\,\, L= A_1 \big)$$
and we see that
\begin{align*}
    f'' \circ \varphi'_{m,n}(A_1, B_1, A_2, B_2) &= \Big(B_2A_2B_2^{-1}B_2-B_2A_2, \quad A_1B_1A_1^{-1}B_1^{-1}A_2-B_2A_2B_2^{-1},\\
    &\qquad B_1^{-1}A_2-A_1B_1^{-1}B_1A_1^{-1}B_1^{-1}A_2 \Big)\\
    &= \Big(0,\,\, A_1B_1A_1^{-1}B_1^{-1}A_2-B_2A_2B_2^{-1},\,\, 0 \Big)
\end{align*}
whilst
\begin{align*}
    \psi_{m,n} \circ f(A_1, B_1, A_2, B_2) &= \Big(0,\,\, A_1B_1A_1^{-1}B_1^{-1}A_2B_2A_2B_2^{-1}- \r{Id}_{m+n},\,\, 0 \Big).
\end{align*}
Hence in this case the top-right square in \eqref{fig:10.5} does not commute and there does not exist an isomorphism $\delta$ that can rectify this.
\end{eg}

To deal with the non-commutativity of the top-right square in \eqref{fig:10.5} we extend the right-hand cube into:
\begin{equation}\label{fig:10.6}
\begin{tikzcd}[column sep=1.5em]
& M_{m,n} \arrow[rrr, "f_0''", crossing over] &&& X'_{m,n} \arrow[rr, "\pi_1'"] && Y'_{m,n}\\
\r{GL}_{m,n}^{2g} \arrow[ru, hook, "\varphi'_{m,n}"] &&& X_{m,n} \arrow[ru, hook, "\psi'_{m,n} \times \r{id}"] && Y_{m,n} \arrow[ru, hook, "\psi'_{m,n}"]\\
& Z_{m,n} \arrow[uu, hook', "j'_{m,n}" near start] \arrow[rrr, "\widetilde{f_0''}" near start] &&& Z \arrow[uu, hook', crossing over, "j'" near start] \arrow[rr, "\widetilde{\pi}_1'" near start] && Z_m \times Z_n \arrow [uu, hook', crossing over, "i'"']\\
V_{m,n} \arrow[uu, hook', "j_{m,n}"] \arrow[ru, "\widetilde{\varphi}'_{m,n}"' near end, "\sim" sloped] \arrow[rrr, "\widetilde{f_0}"'] &&& V \arrow[uu, hook', crossing over, "j" near start] \arrow[ru, "\widetilde{\psi}'_{m,n} \times \r{id}"' near end, "\sim" sloped] \arrow[rr, "\widetilde{\pi}_1"'] && V_m \times V_n \arrow [uu, hook', crossing over, "i" near start] \arrow[ru, "\widetilde{\psi}'_{m,n}"' near end, "\sim" sloped]
\arrow[from=2-4,to=2-6, "\pi_1" near end, crossing over]
\arrow[from=2-1,to=2-4, "f_0" near end, crossing over] 
\end{tikzcd}
\end{equation}
where
\begin{align*}
    X_{m,n} &= Y_{m,n} \times \r{Mat}_{m \times n}^{2g+r}\\
    X'_{m,n} &= Y'_{m,n} \times \r{Mat}_{m \times n}^{2g+r}\\
    V &= V_m \times V_n \times \r{Mat}_{m \times n}^{2g+r}\\
    Z &= Z_m \times Z_n \times \r{Mat}_{m \times n}^{2g+r}
\end{align*}
and
\begin{align*}
    f_0(A_i, B_i) &= \Big(f(A_i, B_i),\,\, \varphi'_{m,n}(A_i, B_i)^{(3)} \Big)\\
    f_0''(\rho) &= \Big(f''(\rho),\,\, \rho^{(3)} \Big)
\end{align*}
the maps $\pi_1,\, \pi_1',\, \widetilde{\pi}_1,\, \widetilde{\pi}'_1$ are the respective projections, and $j,\, j'$ are the obvious inclusions.

Hence \eqref{fig:10.6} is indeed just the right-hand cube of \eqref{fig:10.5} since $\pi_1 \circ f_0 = f$, $\pi_1' \circ f_0'' = f''$ and $\widetilde{\pi}_1 \circ \widetilde{f}_0 = \widetilde{f}$, $\widetilde{\pi}_1' \circ \widetilde{f}_0'' = \widetilde{f}''$. It is also clear that all the squares in diagram \eqref{fig:10.6} except the top-left square are Cartesian. The top-left square of \eqref{fig:10.6} will fail to be commutative due to the top-right square of \eqref{fig:10.5} not being commutative. 

\begin{prop}\label{prop1.08}
The right-hand cube in diagram \eqref{fig:10.6} satisfies all the conditions needed to apply \Cref{cor1.06}.
\end{prop}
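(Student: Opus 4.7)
The plan is to exhibit a product decomposition $Y'_{m,n} \cong Y_{m,n} \times S$ under which the top face of the right-hand cube of \eqref{fig:10.6} becomes an instance of diagram \eqref{fig:1.001}, verify the remaining Cartesian conditions of \eqref{fig:1.004}, and then apply \Cref{cor1.06} with $\alpha, \beta, \delta$ all taken to be identities.

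First I would set $S = \r{GL}_m^r \times \r{GL}_n^r \times \r{Mat}_{m \times n}^r$. The splitting $Q_1 = Q'_1 \sqcup H$ gives $M_m = \r{GL}_m^{2g} \times \r{GL}_m^r$ and similarly for $M_n$, so re-ordering factors produces natural isomorphisms $Y'_{m,n} \cong Y_{m,n} \times S$ and $X'_{m,n} \cong X_{m,n} \times S$. Under these identifications, $\pi_1'$ corresponds to $\pi_1 \times \r{id}_S$, so in the notation of \eqref{fig:1.001} I take $X = X_{m,n}$, $Y = Y_{m,n}$, $f = \pi_1$, $f' = \pi_1'$, with $\pi_X, \pi_Y$ the natural product projections off $S$. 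The crucial observation, using \Cref{lem3.412}, is that $\varphi'_m(\rho') = (\rho', p_{h_1}(\rho'), \ldots, p_{h_r}(\rho')) \in \r{GL}_m^{2g} \times \r{GL}_m^r$; hence $\psi'_{m,n}$ is a (graph-type) section of $\pi_Y$ and $\psi'_{m,n} \times \r{id}$ is a section of $\pi_X$.

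With this setup in place, both squares in \eqref{fig:1.001} are Cartesian by direct fibre-product computation (the upper one since $f' = f \times \r{id}_S$, the lower one since $X'_{m,n} \times_{Y'_{m,n}} Y_{m,n}$ recovers $Y_{m,n} \times \r{Mat}_{m \times n}^{2g+r}$ along the graph of $\psi'_{m,n}$). The four side faces of the cube in \eqref{fig:10.6} are likewise Cartesian: using the splittings $V = V_m \times V_n \times \r{Mat}_{m \times n}^{2g+r}$ and $Z = Z_m \times Z_n \times \r{Mat}_{m \times n}^{2g+r}$, each fibre product reduces to matching the $\r{GL}$-components in $V_m \times V_n$ and forcing the remaining $\r{Mat}$-components to vanish. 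The bottom face is Cartesian because $\widetilde{\psi}'_{m,n}$ is an isomorphism. Smoothness of the top face is immediate from its description as products of $\r{GL}$'s and $\r{Mat}$'s, the maps $j, j', i, i'$ are closed immersions by construction, and $\widetilde{\psi}'_{m,n} = \widetilde{\varphi}'_m \times \widetilde{\varphi}'_n$ together with $\widetilde{\psi}'_{m,n} \times \r{id}$ are isomorphisms by \Cref{prop1.02} and \Cref{lem3.412}.

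The main conceptual hurdle, and the reason the analogous top-right square of \eqref{fig:10.5} fails even to commute (cf.\ \Cref{eg5.3}), is that $\psi'_{m,n}$ is not a constant-basepoint section but a graph section whose $S$-coordinate is determined by the paths $p_{h_i}$. Once the product $Y'_{m,n} \cong Y_{m,n} \times S$ is arranged so that $\pi_Y$ forgets precisely those factors onto which $\psi'_{m,n}$ encodes the $p_{h_i}$ (and zeros in the remaining $\r{Mat}$-coordinates), every required Cartesian property becomes routine and \Cref{cor1.06} applies with trivial $\alpha, \beta, \delta$.
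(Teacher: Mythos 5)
Your overall strategy (decompose $Y'_{m,n}\cong Y_{m,n}\times S$ with $S=\r{GL}_m^r\times\r{GL}_n^r\times\r{Mat}_{m\times n}^r$, view $\psi'_{m,n}$ as a graph-type section, check the two squares of \eqref{fig:1.001} and the side faces) is the same as the paper's, but your ``crucial observation'' is false, and with it the claim that \Cref{cor1.06} applies with $\alpha,\beta$ trivial. The inclusion $\varphi'_m$ is induced from the surjection $\tau'=\tau''\circ\tau$ of \Cref{lem3.412}, not from $\tau$ alone: its components indexed by $a\in Q'_1$ are the matrices $\tau''(a)(\rho')$, i.e.\ the words produced by the Nielsen-equivalence isomorphism $\tau''$ evaluated at $\rho'$, and only the $H$-indexed components are of the form $\tau''(p_h)(\rho')$. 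So $\varphi'_m=\varphi_m\circ\alpha_m$, where $\varphi_m(\rho')=(\rho',p_i(\rho'))$ is the map you wrote down and $\alpha_m$ is the (generally nontrivial) automorphism of $\r{GL}_m^{2g}$ induced by $\tau''$. Already in \Cref{eg5.3} one has $I=B_1^{-1}$, $D=A_2$, etc., so with your arrow-indexed ``natural'' projection $\pi_Y$ the composite $\pi_Y\circ\psi'_{m,n}$ equals $\alpha_m\times\alpha_n\times\r{id}$, not the identity; in general $\tau''$ sends generators to arbitrary words, so $\psi'_{m,n}$ is not a section of that projection at all. Note also that if trivial $\alpha,\beta,\delta$ sufficed you could invoke \Cref{thm1.04} directly; the twists in \Cref{cor1.06} exist precisely to handle this discrepancy.

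The paper's proof repairs exactly this point: it sets $\psi=\psi_{m,n}$ (built from $\varphi_m,\varphi_n$, i.e.\ from $\tau$ only), $\pi_Y(\rho',\rho'',R_0,\ldots,R_r)=(\pi_m(\rho'),\pi_n(\rho''),R_0)$, and then takes $\alpha=\alpha_m\times\alpha_n\times\r{id}_{\r{Mat}_{m\times n}}$, $\varphi=\psi_{m,n}\times\r{id}$, $\beta=\alpha\times\r{id}$, $\delta=\r{id}_{Y'_{m,n}}$, $f'=f''=\pi_1'$, after which $\pi_Y\circ\psi=\r{id}$ and both squares of \eqref{fig:1.001} are visibly Cartesian. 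You could in principle absorb $\alpha$ into a twisted identification $Y'_{m,n}\cong Y_{m,n}\times S$, but that is just a repackaging of the same $\alpha,\beta$ bookkeeping, and it is not what your argument does, since you explicitly fix the splitting coming from $Q_1=Q'_1\sqcup H$. As written, the step ``hence $\psi'_{m,n}$ is a section of $\pi_Y$'' fails, so the proposal has a genuine gap at the central point of the proposition.
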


\begin{proof}
Looking at the conditions from \eqref{fig:1.004} needed to apply \Cref{cor1.06}, all are easily verified apart from the condition that the top face of this cube can be fitted into a diagram of the form \eqref{fig:1.001} using maps $\psi' = \psi \circ \alpha$, $\varphi' = \varphi \circ \beta$ and $f'' = \delta \circ f'$ such that $\pi_Y \circ \psi = \r{id}_{Y_{m,n}}$ and $(\pi_Y \times \r{id}_{\r{Mat}_{m \times n}^{2g+r}}) \circ \varphi = \r{id}_{X_{m,n}}$, where $\psi,\, \alpha, \varphi,\, \beta, f',\, \delta$ and $\pi_Y$ need to be determined.

For this cube, we have $\varphi' = \psi'_{m,n} \times \r{id}_{\r{Mat}_{m \times n}^{2g+r}}$ and $\psi' = \psi'_{m,n}$. The inclusion $\psi'_{m,n}$ is defined using $\varphi'_m$ and $\varphi'_n$ which are in turn determined by the algebra surjection $\tau'$ given by \Cref{lem3.412}. $\tau'$ was constructed such that $\tau' = \tau'' \circ \tau$ which gives us the factorisation of $\varphi'$ and of $\psi'$ we are looking for. In particular, we write $\varphi'_n = \varphi_n \circ \alpha_n$ where $\varphi_n: \r{GL}_n^{2g} \hookrightarrow \r{GL}_n^{2g+r}$ is the inclusion
$$\big(A_a \big)_{a \in Q'_1} \longmapsto \big(A_a, p_i(A_a) \big)_{a \in Q'_1,\,\, i = 1, \ldots, r}$$
which is induced from the surjection $\tau$, and $\alpha_n: \r{GL}_n^{2g} \xrightarrow{\sim} \r{GL}_n^{2g}$ is the isomorphism induced from the isomorphism $\tau''$. Also consider the projection $\pi_n: \r{GL}_n^{2g+r} \longrightarrow \r{GL}_n^{2g}$
$$\big(A_a, B_{h_i} \big)_{a \in Q'_1,\, i=1, \ldots r} \longmapsto \big(A_a \big)_{a \in Q'_1}.$$
Then we take $\psi = \psi_{m,n}: Y_{m,n} \longrightarrow Y'_{m,n}$ to be
$$\big(\rho',\, \rho'',\, R \big) \longmapsto \big(\varphi_m(\rho'),\, \varphi_n(\rho''),\, R,\,\, 0, \ldots, 0 \big)$$
and $\pi_Y: Y'_{m,n} \longrightarrow Y_{m,n}$ to be
$$\big(\rho',\, \rho'',\, R_0, \ldots, R_r \big) \longmapsto \big(\pi_m(\rho'),\, \pi_n(\rho''),\, R_0 \big)$$
and
\begin{align*}
    \alpha &= \alpha_m \times \alpha_n \times \r{id}_{\r{Mat}_{m \times n}}\\
    \varphi &= \psi_{m,n} \times \r{id}_{\r{Mat}_{m \times n}^{2g+r}}\\
    \beta &= \alpha \times \r{id}_{\r{Mat}_{m \times n}^{2g+r}}\\
    \delta &= \r{id}_{Y'_{m,n}}\\
    f' &= f'' = \pi'_1.
\end{align*}
The squares from diagram \eqref{fig:1.001} then become
\[\begin{tikzcd}[column sep = 8em, row sep = 6em]
Y'_{m,n} \times \r{Mat}_{m \times n}^{2g+r} \arrow[r, "\pi_1'"] & Y'_{m,n}\\
Y_{m,n} \times \r{Mat}_{m \times n}^{2g+r} \arrow[u, hook, "\psi_{m,n} \times \r{id}"] \arrow[r, "\pi_1"] & Y_{m,n} \arrow[u, hook, "\psi_{m,n}"]
\end{tikzcd}\]
and
\[\begin{tikzcd}[column sep = 8em, row sep = 6em]
Y_{m,n} \times  \r{Mat}_{m \times n}^{2g+r} \arrow[r, "\pi_1"] & Y_{m,n}\\
Y'_{m,n} \times  \r{Mat}_{m \times n}^{2g+r} \arrow[u, "\pi_Y \times \r{id}"] \arrow[r, "\pi_1'"] & Y'_{m,n} \arrow[u, "\pi_Y"]
\end{tikzcd}\]
both of which are clearly Cartesian, and we have that $\pi_Y \circ \psi = \r{id}_{Y_{m,n}}$ and $(\pi_Y \times \r{id}_{\r{Mat}_{m \times n}^{2g+r}}) \circ \varphi = \r{id}_{X_{m,n}}$ as required.
\end{proof}

The reason we have constructed diagram \eqref{fig:10.6} is that, whilst for \eqref{fig:10.5} there most likely will not exist an isomorphism $\delta_Y: Y'_{m,n} \xrightarrow{\sim} Y'_{m,n}$ that will make the top-right square Cartesian when replacing $f''$ with $f' =  \delta_Y^{-1} \circ f''$, there will exist an isomorphism $\delta_X: X'_{m,n} \xrightarrow{\sim} X'_{m,n}$ such that the top-left square will become Cartesian after replacing $f''_0$ with $f'_0 = \delta^{-1}_X \circ f''_0$. This is because due to the definitions of $X'_{m,n}$ and $f''_0$ we retain all the information of the representations in $M_{m,n}$. Informally we can see this happening in \Cref{eg5.3}. It is not too hard to check that we can fully recover the data of $(A_1, B_1, A_2, B_2) \in \r{GL}_{m,n}^4$ in the image of $f''_0 \circ \varphi'_{m,n}$. So if $(\rho', \rho'', R_1, R_2, R_3, (D,F,G,I,J,L))$ is in the image of $f''_0 \circ \varphi'_{m,n}$ our isomorphism $\delta_X$ will then send this to
$$\big(\rho',\, \rho'',\, R_1,\, R_2 B_2 A_2^{-1} B_2^{-1},\, R_3,\, (D,\,F,\,G,\,I,\,J,\,L) \big)$$
and we can see that from the calculation in \Cref{eg5.3} this will make the square commutative (and in fact it will be Cartesian).

\begin{prop}\label{prop1.09}
The left-hand cube in diagram \eqref{fig:10.6} satisfies all the conditions needed to apply \Cref{cor1.06}.
\end{prop}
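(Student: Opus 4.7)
The plan is to mimic the strategy of \Cref{prop1.08} while accounting for the fact that the inclusion $\varphi'_{m,n}$ genuinely mixes the $Q'_1$- and $H$-coordinates via the paths $p_h$, so neither $f'$ nor $\delta$ can be chosen as trivially as in the right-hand cube. I first set $S = \r{GL}_{m,n}^r$ and identify $M_{m,n} \cong \r{GL}_{m,n}^{2g} \times S$ via the splitting $Q_1 = Q'_1 \sqcup H$, and $X'_{m,n} \cong X_{m,n} \times S$ by regrouping the $(Q'_1)$- and $H$-parts of $M_m$, $M_n$ and $\r{Mat}_{m \times n}^{|E|}$. Using $\tau' = \tau'' \circ \tau$ from \Cref{lem3.412}, I factor $\varphi'_{m,n} = \varphi_{m,n} \circ \alpha_{m,n}$, where $\alpha_{m,n}$ is the isomorphism induced by $\tau''$ and $\varphi_{m,n}(A) = (A_a, p_h(A))_{a \in Q'_1, h \in H}$ is the closed immersion induced by $\tau$. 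Setting $\beta = \alpha_{m,n}$, $\varphi = \varphi_{m,n}$, and defining $\alpha$ and $\psi$ analogously (as in \Cref{prop1.08}, replacing the primed versions by the unprimed), together with the natural projections $\pi_X, \pi_Y$ onto the $Q'_1$-coordinates, gives the factorisations $\varphi' = \varphi \circ \beta$, $\psi' = \psi \circ \alpha$ and the identities $\pi_X \circ \varphi = \r{id}_X$, $\pi_Y \circ \psi = \r{id}_Y$.

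Next I would construct $f'_0 \colon M_{m,n} \to X'_{m,n}$ so that both squares of the primed version of \eqref{fig:1.001} become Cartesian. A suitable choice is
\[
f'_0(\rho) = \Big(\rho^{(1)},\; \rho^{(2)},\; \lambda(\rho|_{Q'_1})^{(3)},\; \bigl(\rho^{(3)}_{h_i} - p_{h_i}(\rho|_{Q'_1})^{(3)}\bigr)_{i=1}^{r},\; \bigl(\rho^{(3)}_a\bigr)_{a \in Q'_1},\; \bigl(p_h(\rho|_{Q'_1})^{(3)}\bigr)_{h \in H}\Big).
\]
Commutativity of the top square with $\pi'_X, \pi'_Y$ follows from the identities $\lambda(\alpha_{m,n}(A)) = \lambda(A)$ and $p_h(\alpha_{m,n}(A)) = \tau''(p_h)(A)$, which are consequences of $\tau''$ being an algebra isomorphism, and Cartesian-ness of both squares is a direct unwinding of the parameterisation of the fibre product: the vanishing of the $e_i$-coordinate of $f'_0(\rho)$ forces $\rho^{(3)}_{h_i} = p_{h_i}(\rho|_{Q'_1})^{(3)}$, which together with $\rho^{(1)}, \rho^{(2)}$ lying in the images of $\varphi'_m, \varphi'_n$ (i.e.\ $\rho^{(\ell)}_{h_i} = p_{h_i}(\rho^{(\ell)}|_{Q'_1})$ for $\ell = 1,2$) forces $\rho_{h_i} = p_{h_i}(\rho|_{Q'_1})$ for all $i$, so that $\rho \in \varphi'_{m,n}(X)$.

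The hard part will then be constructing the isomorphism $\delta \colon X'_{m,n} \to X'_{m,n}$ realising $f''_0 = \delta \circ f'_0$ while fixing $i'(Z)$ pointwise. I would take $\delta$ to be a triangular shearing in the coordinates $(R_{e_0}, R_{e_i}, T_h)$ leaving $(\rho^{(1)}, \rho^{(2)}, \rho^{(3)}_a)$ unchanged: the shift $T_h \mapsto T_h + R_{e_i}$ (for the paired index $h = h_i$) transports $p_h(\rho|_{Q'_1})^{(3)}$ to $\rho^{(3)}_h$ on the image of $f'_0$ and acts as the identity on $i'(Z)$ since $R_{e_i} = 0$ there; the shifts in the $R_{e_i}$- and $R_{e_0}$-slots are obtained from the decomposition $\partial W/\partial e_i = q'_i h_i q''_i - q'_i p_{h_i} q''_i$ coming from the inductive proof of \Cref{lem3.412} together with block-matrix expansion of the $(3)$-block of such path products. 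The main obstacle is arranging these last two shifts to simultaneously transform $\lambda(\rho|_{Q'_1})^{(3)}$ and $\rho^{(3)}_{h_i} - p_{h_i}(\rho|_{Q'_1})^{(3)}$ into $\partial W/\partial e_0(\rho)^{(3)}$ and $\partial W/\partial e_i(\rho)^{(3)}$ on the image of $f'_0$ while vanishing identically on $i'(Z)$; for the latter, the point is that on $i'(Z)$ the $(1)$- and $(2)$-block differences $\rho(h_i)^{(\ell)} - p_{h_i}(\rho|_{Q'_1})^{(\ell)}$ ($\ell = 1,2$) automatically vanish because $\rho_1 \in Z_m$ and $\rho_2 \in Z_n$ satisfy the Jacobi relations, so every correction term in the block expansion reduces to a multiple of $R_{e_i}$ or of a residual Jacobi obstruction, both of which are zero on $i'(Z)$. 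The remaining conditions of \Cref{cor1.06}---Cartesian-ness of the other squares in \eqref{fig:1.004}, closed-immersion properties of $j, j', i, i'$, and the isomorphism properties of the tilde-maps---are then routine.
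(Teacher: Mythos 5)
Your setup is sound and in fact reproduces the paper's strategy: you factor $\varphi'$ and $\psi'$ through $\tau = \tau''\circ$-free part, take $\pi_X,\pi_Y$ to be the projections onto the $Q'_1$-coordinates, and your proposed $f'_0$ agrees (up to the bookkeeping that the third slot should be $\tau^{\prime\prime -1}(\lambda)(\rho|_{Q'_1})-\r{Id}_{m+n}$ evaluated in the re-coordinatised domain, not ``$\lambda(\rho|_{Q'_1})$'', and your claimed identity $\lambda(\alpha_{m,n}(A))=\lambda(A)$ is not correct as stated) with the map the paper exhibits as $(f_0\times\r{id})\circ\gamma$ for the shear $\gamma$. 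The genuine gap is that the construction of $\delta$ --- which is the actual content of this proposition --- is left as an acknowledged ``main obstacle'' with only a heuristic sketch. Two specific ingredients are missing. First, for the $e_0$-slot you never invoke \Cref{lem3.411}: the only relation between $\tau(\partial W/\partial e_0)=p_0-q_0$ and $\tau^{\prime\prime -1}(\lambda)-1$ is $p_0q_0^{-1}=u'\,\tau^{\prime\prime -1}(\lambda^{c})\,u^{\prime -1}$ for a unit $u'$ and a sign $c\in\{1,-1\}$, obtained from the normal-closure argument in free groups. Consequently the required transformation of the $R_{e_0}$-coordinate is not an additive ``triangular shearing'' by quantities vanishing on $i'(Z)$; it is a multiplicative twist (right multiplication by $q_0^{-1}(X_a)$ and conjugation by $u'(X_a)$, with an extra factor of $\tau^{\prime\prime -1}(\lambda)(X_a)$ and a sign when $c=-1$), and the case $c=-1$ needs a separately modified $\delta$ --- none of which appears in your sketch. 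Second, your shifts in the $R_{e_i}$-slots rely on the route-length-one decomposition $\partial W/\partial e_i = q'_ih_iq''_i - q'_ip_{h_i}q''_i$; for arrows of higher route length $\partial W/\partial e_i$ contains other arrows of $H$, and one must instead use the full relations $h_i-p_i=\sum_j l_{i,j}u_{i,j}(\partial W/\partial e_j)v_{i,j}$ of \eqref{eq:5.02}, which is why the paper's $\delta^{-1}$ involves the expressions $F_{h_j}$, $G_{h_i}$ and the telescoping sums $S,T$ built from the factorisation \eqref{eq:5.01} of $\partial W/\partial e_0$.

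In short: identifying that every correction ``reduces to a multiple of $R_{e_i}$ or a residual Jacobi obstruction vanishing on $i'(Z)$'' is exactly the statement that must be proved, and proving it requires writing down $\delta$ globally on $X'_{m,n}$ (not just on the image of $f'_0$), verifying $f''_0=\delta\circ f'_0$ by the block computation with the telescoping sums, and checking $\delta|_{Z}=\r{id}_Z$ using that the cut relations vanish on $Z_m\times Z_n$. Since your argument neither constructs $\delta$ nor brings in the conjugation/inversion datum from \Cref{lem3.411} that makes the $e_0$-slot work, it does not yet establish the proposition.
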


\begin{proof}
As in \Cref{prop1.08} all the conditions are obvious bar the fact that the top face of the cube can be fitted into a diagram of the form \eqref{fig:1.001}. For this cube we have $\varphi' = \varphi'_{m,n}$ and $\psi'= \psi'_{m,n} \times \r{id}_{\r{Mat}_{m \times n}^{2g+r}}$. We utilise the same factorisation $\tau' = \tau'' \circ \tau$ to factorise $\psi'$ and $\varphi'$. Therefore $\varphi = \varphi_{m,n}$ and $\psi = \psi_{m,n} \times \r{id}_{M_{m,n}}$ with $\varphi_n$ and $\psi_{m,n}$ described in the proof of \Cref{prop1.08}. Then $f_0$ becomes the map that sends $(A_a)_{a \in Q'_1} \in \r{GL}_{m,n}^{2g}$ to
\begin{align}
    \Big((A_a^{(1)})_{a \in Q'_1}\,,\,\,\, (A_a^{(2)})_{a \in Q'_1}\,,\,\, \big(\tau^{\prime \prime -1}(\lambda)(A_a) - \r{Id}_{m+n} \big)^{(3)}\,,\,\, (A_a^{(3)}, p_i(A_a)^{(3)})_{a \in Q'_1,\, i=1, \ldots, r} \Big) \label{eq:5.00}
\end{align}
for $\lambda = \prod_{i=1}^g x_iy_ix_i^{-1}y_i^{-1} \in \m C \langle x_1^{\pm 1}, y_1^{\pm 1}, \ldots, x_g^{\pm 1}, y_g^{\pm 1} \rangle$.

Let $a \in Q'_1$ and $h \in Q_1 \setminus Q'_1$ index the matrices in a representation of $Q$, then $X'_{m,n} \cong X_{m,n} \times \r{GL}_{m,n}^r$ with the isomorphism given by
$$\Big((A'_a, B'_h),\, (A''_a, B''_h),\, R_0, R_1, \ldots, R_r,\, (A'''_a, B'''_h) \Big) \longmapsto \Big((A'_a),\, (A''_a),\, R_0,\, (A'''_a, B'''_h), (C_{h_i}) \Big)$$
where
$$C_{h_i} =
\begin{pmatrix}
B'_{h_i} & R_i\\
0 & B''_{h_i}
\end{pmatrix}$$
for $i=1, \ldots r$.

So as per \Cref{rmk3.1} it suffices to show that there exists some isomorphism $\gamma: M_{m,n} \xrightarrow{\sim} M_{m,n}$ such that
$$f'_0 = \Big(f_0 \times \r{id}_{\r{Mat}_{m \times n}^{2g+r}} \Big) \circ \gamma$$
where $f''_0 = \delta  \circ f'_0$ for some isomorphism $\delta: X_{m,n} \times \r{GL}_{m,n}^r \xrightarrow{\sim} X_{m,n} \times \r{GL}_{m,n}^r$.

Write
\begin{align}
    \frac{\partial W}{\partial e_0} = \prod_{i=r}^1 s_i h_i^{c_i} s_0 - \prod_{i=r}^1 t_i h_i^{d_i} t_0 \label{eq:5.01}
\end{align}
where $h_i \in Q_1 \setminus Q'_1$, $s_i,\, t_i$ are paths in $\m CQ'$ for $i=0, \ldots, r$, and $c_i,\, d_i \in \{0,1\}$ such that for each $i$ $c_i$ and $d_i$ cannot both be 1 (any $h \in Q_1 \setminus Q'_1$ can appear at most once in $\partial W/\partial e_0$, since for a brane tiling each arrow from the dual quiver must appear exactly twice in the potential and we already know that $h$ appears at least once in the relations $\partial W/\partial e_i$ for $i=1, \ldots r$). Using \Cref{lem3.412}, for each $i = 1, \ldots, r$ we can write
\begin{align}
    h_i - p_i = \sum_j l_{i,j}\, u_{i,j} \left(\frac{\partial W}{\partial e_j} \right) v_{i,j} \label{eq:5.02}
\end{align}
where $l_{i,j} \in \m C$, $p_i$ is the path in $\m C \widetilde{Q}'$ related to $h_i$, and $u_{i,j}$ and $v_{i,j}$ are paths in $\m C \widetilde{Q}$.

Write
$$\tau\left(\frac{\partial W}{\partial e_0} \right) =  \prod_{i=r}^1 s_i p_i^{c_i} s_0 - \prod_{i=r}^1 t_i p_i^{d_i} t_0 = p_0 - q_0 $$
for $p_0$ and $q_0$ paths in $\m C \widetilde{Q}'$. Then we can apply \Cref{lem3.411} to $F(Q')$ and $F(x_1,y_1, \ldots, x_g,y_g)$, and $\lambda = \prod_{i=1}^g x_iy_ix_i^{-1}y_i^{-1}$ and $\lambda' = p_0 q_0^{-1}$, giving us the unit $u \in \m C \langle x_1^{\pm 1}, y_1^{\pm 1}, \ldots, x_g^{\pm 1}, y_g^{\pm 1} \rangle$ such that
$$\tau''(\lambda) = u \, \lambda^c \, u^{-1}$$
for $c \in \{1, -1\}$. Define $u' \in \m C \widetilde{Q}'$ to be
$$u' = \tau^{\prime \prime -1}(u)$$
hence
\begin{align}
    p_0 q_0^{-1} = u'\, \tau^{\prime \prime -1}(\lambda^c)\, u^{\prime -1}. \label{eq:5.03}
\end{align}
For the moment assume that $c=1$. Then for $(A_a)_{a \in Q'_1} \in \r{GL}_{m,n}^{2g}$ we have
\begin{align}
    p_0(A_a) - q_0(A_a) &= \tau \left(\frac{\partial W}{\partial e_0} \right)(A_a) \label{eq:5.04}\\
    &= \frac{\partial W}{\partial e_0}(\varphi_{m,n}(A_a))\nonumber\\
    &= \prod_{i=r}^1 s_i(A_a)\, p_i(A_a)^{c_i}\, s_0(A_a) - \prod_{i=r}^1 t_i(A_a)\, p_i(A_a)^{d_i}\, t_0(A_a). \nonumber
\end{align}
Then define $\delta^{-1}: X_{m,n} \times \r{GL}_{m,n}^r \xrightarrow{\sim} X_{m,n} \times \r{GL}_{m,n}^r$ by sending $\Big((A'_a),\, (A''_a),\, R_0,\, (A'''_a, B'''_h), (C_{h}) \Big)_{a \in Q'_1,\, h \in Q_1 \setminus Q'_1}$ to
$$\bigg((A'_a),\, (A''_a),\, \Big(u^{\prime -1}(X_a)\, \big(R - S - T \big)\, q_0^{-1}(X_a)\, u'(X_a) \Big)^{(3)},\, (A'''_a, L'''_h),\, (D_h) \bigg)_{a \in Q'_1,\, h \in Q_1 \setminus Q'_1}$$
where for $a \in Q'_1$ and $h \in Q_1 \setminus Q'_1$
\begin{align*}
    X_a &= \begin{pmatrix}
    A'_a & A'''_a\\
    0 & A''_a
    \end{pmatrix}\\
    Y_h &= \begin{pmatrix}
     C_{h}^{(1)} & B'''_h\\
    0 &  C_{h}^{(2)}
    \end{pmatrix}\\
    F_{h_i} &= C_{h_i} - \begin{pmatrix}
    C_{h_i}^{(1)} - \partial W/\partial e_i(A'_a, C_h^{(1)}) & 0\\
    0 & C_{h_i}^{(2)} - \partial W/\partial e_i(A''_a, C_h^{(2)})
    \end{pmatrix}\\
    G_{h_i} &= \sum_j l_{i,j}\, u_{i,j}(X_a,Y_h)\, F_{h_j}\, v_{i,j}(X_a,Y_h)\\
    L'''_{h_i} &= B'''_{h_i} - G_{h_i}^{(3)}\\
    D_{h_i} &= \begin{pmatrix}
    B'_{h_i} & G_{h_i}^{(3)}\\
    0 & B''_{h_i}
    \end{pmatrix}
\end{align*}
and
\begin{align*}
    R &= R \big((A'_a), (A''_a), (C_h), R_0 \big)\\
    &= \begin{pmatrix}
    \partial W/\partial e_0(A'_a, C_h^{(1)}) & R_0\\
    0 & \partial W/\partial e_0(A''_a, C_h^{(2)})
    \end{pmatrix}\\
    S &= S \big((X_a), (Y_h), (G_{h_i}) \big)\\
    &= \sum_{i : c_i = 1} \Bigg(\bigg(\prod_{j=r}^{i+1} s_j(X_a) Y_{h_j}^{c_j} \bigg) s_i(X_a)\, G_{h_i} \bigg(\prod_{j=i-1}^1 s_j(X_a)\, p_j(X_a)^{c_j} \bigg) s_0(X_a) \Bigg)\\
    T &= T \big((X_a), (Y_h), (G_{h_i}) \big)\\
    &= \sum_{i : d_i = 1} \Bigg(\bigg(\prod_{j=r}^{i+1} t_j(X_a) Y_{h_j}^{d_j} \bigg) t_i(X_a)\, G_{h_i} \bigg(\prod_{j=i-1}^1 t_j(X_a)\, p_j(X_a)^{d_j} \bigg) t_0(X_a) \Bigg).
\end{align*}
We can therefore calculate where $f'_0 = \delta^{-1} \circ f''_0$ sends $\big((A_a, B_h)_{a \in Q'_1,\, h \in Q_1 \setminus Q'_1} \big)$:

Using the above notation and \eqref{eq:5.02} we get
\begin{align*}
    C_{h_i} &= \begin{pmatrix}
    B_{h_i}^{(1)} & (\partial W/\partial e_i(A_a,B_h))^{(3)}\\
    0 & B_{h_i}^{(2)}
    \end{pmatrix}\\
    X_a &= A_a\\
    Y_h &= B_h\\
    F_{h_i} &= C_{h_i} -
    \begin{pmatrix}
    B_{h_i}^{(1)} - \partial W/\partial e_i(A_a^{(1)},B_h^{(1)})) & 0\\
    0 & B_{h_i}^{(2)} - \partial W/\partial e_i(A_a^{(2)},B_h^{(2)}))
    \end{pmatrix}\\
    &= \frac{\partial W}{\partial e_i}(A_a, B_h)\\
    G_{h_i} &= \sum_j l_{i,j}\, u_{i,j}(A_a,B_h)\, \frac{\partial W}{\partial e_j}(A_a, B_h)\, v_{i,j}(A_a,B_h)\\
    &= B_{h_i} - p_i(A_a)\\
    L'''_{h_i} &= B_{h_i}^{(3)} - \big(B_{h_i} - p_i(A_a) \big)^{(3)}\\
    &= p_i(A_a)^{(3)}\\
    D_{h_i} &= \begin{pmatrix}
    B_{h_i}^{(1)} & \big(B_{h_i} - p_i(A_a) \big)^{(3)}\\
    0 & B_{h_i}^{(2)}
    \end{pmatrix}\\
    &= B_{h_i} - \begin{pmatrix}
    0 & p_i(A_a)^{(3)}\\
    0 & 0
    \end{pmatrix}\\
\end{align*}
and
\begin{align*}
    R &= R \big((A_a^{(1)}), (A_a^{(2)}), (C_h), (\partial W/\partial e_0(A_a,B_h))^{(3)} \big)\\
    &= \begin{pmatrix}
    \partial W/\partial e_0(A_a^{(1)}, C_h^{(1)}) & (\partial W/\partial e_0(A_a,B_h))^{(3)}\\
    0 & \partial W/\partial e_0(A_a^{(2)}, C_h^{(2)})
    \end{pmatrix}\\
    &= \begin{pmatrix}
    \partial W/\partial e_0(A_a^{(1)}, B_h^{(1)}) & (\partial W/\partial e_0(A_a,B_h))^{(3)}\\
    0 & \partial W/\partial e_0(A_a^{(2)}, B_h^{(2)})
    \end{pmatrix}\\
    &= \frac{\partial W}{\partial e_0}(A_a, B_h)\\
    S &= S \big((X_a), (Y_h), (G_h) \big) = S \big((A_a), (B_h), (B_{h_i} - p_i(A_a)) \big)\\
    &= \sum_{i : c_i = 1} \Bigg(\bigg(\prod_{j=r}^{i+1} s_j(A_a) B_{h_j}^{c_j} \bigg) s_i(A_a) \big(B_{h_i} - p_i(A_a) \big) \bigg(\prod_{j=i-1}^1 s_j(A_a)\, p_j(A_a)^{c_j} \bigg) s_0(A_a) \Bigg)\\
    &= \prod_{j=r}^1 s_j(A_a) B_{h_j}^{c_j} s_0(A_a) - \prod_{j=r}^1 s_j(A_a) p_j(A_a)^{c_j} s_0(A_a)\\
    T &= T \big((X_a), (Y_h), (G_h) \big) = T \big((A_a), (B_h), (B_{h_i} - p_i(A_a)) \big)\\
    &= \sum_{i : d_i = 1} \Bigg(\bigg(\prod_{j=r}^{i+1} t_j(A_a) B_{h_j}^{d_j} \bigg) t_i(A_a) \big(B_{h_i} - p_i(A_a) \big) \bigg(\prod_{j=i-1}^1 t_j(A_a)\, p_j(A_a)^{d_j} \bigg) t_0(A_a) \Bigg)\\
    &= \prod_{j=r}^1 t_j(A_a) B_{h_j}^{d_j} t_0(A_a) - \prod_{j=r}^1 t_j(A_a) p_j(A_a)^{d_j} t_0(A_a)
\end{align*}
since the sums in $S$ and $T$ are telescopic. Hence from \eqref{eq:5.01} and \eqref{eq:5.04} we see that
\begin{align*}
    R - S - T &= \frac{\partial W}{\partial e_0}(A_a, B_h) - \bigg[\prod_{j=r}^1 s_j(A_a) B_{h_j}^{c_j} s_0(A_a) - \prod_{j=r}^1 s_j(A_a) p_j(A_a)^{c_j} s_0(A_a) +\\
    & \quad\,\,\, \prod_{j=r}^1 t_j(A_a) B_{h_j}^{d_j} t_0(A_a) - \prod_{j=r}^1 t_j(A_a) p_j(A_a)^{d_j} t_0(A_a) \bigg]\\
    &= \prod_{i=r}^1 s_i(A_a)\, p_i(A_a)^{c_i}\, s_0(A_a) - \prod_{i=r}^1 t_i(A_a)\, p_i(A_a)^{d_i}\, t_0(A_a)\\
    &= \tau \left(\frac{\partial W}{\partial e_0} \right)(A_a)\\
    &= p_0(A_a) - q_0(A_a)
\end{align*}
and so using \eqref{eq:5.03} we get
\begin{align*}
    \Big(u^{\prime -1}(X_a)\, \big(R - S - T \big)\, q_0^{-1}(X_a)\, u'(X_a) \Big)^{(3)} &= \Big(u^{\prime -1}(A_a)\, \big(p_0(A_a) - q_0(A_a) \big)\, q_0^{-1}(A_a)\, u'(A_a) \Big)^{(3)}\\
    &= \Big(u^{\prime -1}(A_a)\, p_0(A_a)\, q_0^{-1}(A_a)\, u'(A_a) - \r{Id}_{m+n} \Big)^{(3)}\\
    &= \Big(\tau^{\prime \prime -1}(\lambda)(A_a) - \r{Id}_{m+n} \Big)^{(3)}.
\end{align*}
Therefore we have that
$f'_0\big((A_a, B_h)_{a \in Q'_1,\, h \in Q_1 \setminus Q'_1} \big)$ equals
$$\left((A_a^{(1)}),\, (A_a^{(2)}),\, \Big(\tau^{\prime \prime -1}(\lambda)(A_a) - \r{Id}_{m+n} \Big)^{(3)},\, (A_a^{(3)}, p_i(A_a)^{(3)}),\, \left(B_{h_i} - \begin{pmatrix}
0 & p_i(A_a)^{(3)}\\
0 & 0
\end{pmatrix} \right) \right)_{a \in Q'_1,\, h \in Q_1 \setminus Q'_1}$$
and so if we take $\gamma: M_{m,n} \xrightarrow{\sim} M_{m,n}$ to be the isomorphism
$$(A_a, B_h)_{a \in Q'_1,\, h \in Q_1 \setminus Q'_1} \longmapsto \left(A_a, B_{h_i} - \begin{pmatrix}
0 & p_i(A_a)^{(3)}\\
0 & 0
\end{pmatrix} \right)_{a \in Q'_1,\, h \in Q_1 \setminus Q'_1}$$
comparing with \eqref{eq:5.00} we see that
$$f'_0 = \Big(f_0 \times \r{id}_{\r{GL}_{m,n}^r} \Big) \circ \gamma$$
as required.

Lastly we must check that $\delta|_Z = \r{id}_Z$. So $\big((A'_a, B'_h),\, (A''_a, B''_h),\, (A'''_a, B'''_h) \big) \in Z$ is sent by $j'$ to $\big((A'_a, B'_h),\, (A''_a, B''_h),\, 0, \ldots, 0,\, (A'''_a, B'''_h) \big) \in X'_{m,n}$ which corresponds to $\big((A'_a),\, (A''_a),\, 0,\, (A'''_a, B'''_h),\, (C_h) \big) \in X_{m,n} \times \r{GL}_{m,n}^r$ where
$$
C_h = \begin{pmatrix}
B'_h & 0\\
0 & B''_h
\end{pmatrix}.
$$
Then, using the fact that since $(A'_a, B'_h) \in Z_m$ and $(A''_a, B''_h) \in Z_m$ the evaluation of $\partial W/\partial e_i$ on these matrices is 0 for $i=0, \ldots r$, we have
\begin{align*}
    X_a &= \begin{pmatrix}
    A'_a & A'''_a\\
    0 & A''_a
    \end{pmatrix}\\
    Y_h &= \begin{pmatrix}
     B'_h & B'''_h\\
    0 & B''_h
    \end{pmatrix}\\
    F_{h_i} &= \begin{pmatrix}
    B'_{h_i} & 0\\
    0 & B''_{h_i}
    \end{pmatrix}
    -
    \begin{pmatrix}
    B'_{h_i} - \partial W/\partial e_i(A'_a, B'_{h}) & 0\\
    0 & B''_{h_i} - \partial W/\partial e_i(A''_a, B''_{h})
    \end{pmatrix}\\
    &= \begin{pmatrix}
    B'_{h_i} & 0\\
    0 & B''_{h_i}
    \end{pmatrix}
    -
    \begin{pmatrix}
    B'_{h_i} & 0\\
    0 & B''_{h_i}
    \end{pmatrix}\\
    &=0\\
    G_{h_i} &= \sum_j l_{i,j}\, u_{i,j}(X_a,Y_h)\, F_{h_j}\, v_{i,j}(X_a,Y_h)\\
    &= 0\\
    L'''_{h_i} &= B'''_{h_i}\\
    D_{h_i} &= \begin{pmatrix}
    B'_{h_i} & 0\\
    0 & B''_{h_i}
    \end{pmatrix}\\
    &= C_{h_i}
\end{align*}
and
\begin{align*}
    R &= R \big((A'_a), (A''_a), (C_h), 0 \big)\\
    &= \begin{pmatrix}
    \partial W/\partial e_0(A'_a, C_h^{(1)}) & 0\\
    0 & \partial W/\partial e_0(A''_a, C_h^{(2)})
    \end{pmatrix}\\
    &= \begin{pmatrix}
    \partial W/\partial e_0(A'_a, B'_h) & 0\\
    0 & \partial W/\partial e_0(A''_a, B''_h)
    \end{pmatrix}\\
    &= 0\\
    S &= S \big((X_a), (Y_h), (0) \big)\\
    &= \sum_{i : c_i = 1} \Bigg(\bigg(\prod_{j=r}^{i+1} s_j(X_a) Y_{h_j}^{c_j} \bigg) s_i(X_a) \cdot 0 \cdot \bigg(\prod_{j=i-1}^1 s_j(X_a)\, p_j(X_a)^{c_j} \bigg) s_0(X_a) \Bigg)\\
    &= 0\\
    T &= T \big((X_a), (Y_h), (0) \big)\\
    &= \sum_{i : d_i = 1} \Bigg(\bigg(\prod_{j=r}^{i+1} t_j(X_a) Y_{h_j}^{d_j} \bigg) t_i(X_a) \cdot 0 \cdot \bigg(\prod_{j=i-1}^1 t_j(X_a)\, p_j(X_a)^{d_j} \bigg) t_0(X_a) \Bigg)\\
    &= 0.
\end{align*}
Hence $\big((A'_a),\, (A''_a),\, 0,\, (A'''_a, B'''_h),\, (C_h) \big)$ is sent by $\delta^{-1}$ to
$$\Big( (A'_a),\, (A''_a),\, 0,\, (A'''_a, B'''_h),\, (C_h) \Big)$$
and so we do indeed get that
$$\delta|_Z = \r{id}_Z.$$
Finally if $c=-1$ i.e. if 
$$\lambda' = u' \tau^{\prime \prime -1}(\lambda^{-1})\, u^{\prime -1}$$
then we modify $\delta$ to send $\Big((A'_a),\, (A''_a),\, R_0,\, (A'''_a, B'''_h), (C_{h}) \Big)_{a \in Q'_1,\, h \in Q_1 \setminus Q'_1}$ to
$$\bigg((A'_a),\, (A''_a),\, -\Big(\tau^{\prime \prime -1}(\lambda)(X_a)\, u^{\prime -1}(X_a)\, \big(R - S - T \big)\, q_0^{-1}(X_a)\, u'(X_a) \Big)^{(3)},\, (A'''_a, L'''_h),\, (D_h) \bigg)_{a \in Q'_1,\, h \in Q_1 \setminus Q'_1}.$$
Following the above calculations we get that for $f'_0 =  \delta^{-1} \circ f''_0$
\begin{align*}
    -\Big(\tau^{\prime \prime -1}(\lambda)(X_a)\, u^{\prime -1}&(X_a)\, \big(R - S - T \big)\, q_0^{-1}(X_a)\, u'(X_a) \Big)^{(3)} =\\
    &= -\Big(\tau^{\prime \prime -1}(\lambda)(A_a)\, u^{\prime -1}(A_a)\, \big(p_0(A_a) - q_0(A_a) \big)\, q_0^{-1}(A_a)\, u'(A_a) \Big)^{(3)}\\
    &= -\Big(\tau^{\prime \prime -1}(\lambda)(A_a)\, u^{\prime -1}(A_a)\, p_0(A_a)\, q_0^{-1}(A_a)\, u'(A_a) - \tau^{\prime \prime -1}(\lambda)(A_a) \Big)^{(3)}\\
    &= -\Big(\tau^{\prime \prime -1}(\lambda)(A_a)\,\tau^{\prime \prime -1}(\lambda^{-1})(A_a) - \tau^{\prime \prime -1}(\lambda)(A_a) \Big)^{(3)}\\
    &= \Big(\tau^{\prime \prime -1}(\lambda)(A_a) - \r{Id}_{m+n} \Big)^{(3)}
\end{align*}
as required.
\end{proof}

We can now put all the pieces together allowing us to prove \Cref{thm1.1}.

\begin{thm}\label{thm3.6}
Let $\Sigma_g$ be a Riemann surface of genus $g$ with brane tiling $\Delta$ giving dual quiver $Q_\Delta$ and potential $W_\Delta$. Fix a cut $E$ for $W_\Delta$ and a maximal tree $T$ in $Q_\Delta \setminus E$, and let $Q$ be the quiver obtained by contracting $T$ in $Q_\Delta \setminus E$ with corresponding potential $W$. Then the 2D CoHA 
$$\bigoplus_{n \in \m N} \r{H}_c(\r{Rep}_n(\pi_1(\Sigma_g)), \m Q)^\vee$$
with multiplication induced by the diagram \eqref{fig:2} is isomorphic as an algebra to the 2D CoHA
$$\bigoplus_{n \in \m N} \r{H}_c(\r{Rep}_n(\r{Jac}(\widetilde{Q},W,E)), \m Q)^\vee$$
with multiplication induced by the diagram \eqref{fig:1}.
\end{thm}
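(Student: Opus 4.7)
The plan is to verify that the two multiplications agree under the vector-space isomorphism induced by the algebra isomorphism $\widetilde{\tau}': \r{Jac}(\widetilde{Q},W,E) \xrightarrow{\sim} \m C[\pi_1(\Sigma_g)]$ of \Cref{lem3.412} (descended from \Cref{prop1.02}). Since each multiplication is the dual of a composition $\overline{f}_\star \circ \overline{\widetilde{h}}^\star \circ q^\star$ followed by the K\"unneth and affine fibration isomorphisms, I would argue morphism-by-morphism that the isomorphisms $\widetilde{\varphi}'_n : V_n \xrightarrow{\sim} Z_n$ from diagram \eqref{fig:10.5} intertwine the two sides.

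For the pullbacks $q^\star$, $\widetilde{h}^\star$, and $\widetilde{h'}^\star$, compatibility is a routine base-change check: the relevant squares in \eqref{fig:10.5} (those going into the page) are Cartesian, so the units $\eta^{\widetilde{h}}$ and $\eta^q$ of the adjunctions commute with the pullback along $\widetilde{\varphi}'_{m+n}$, in the spirit of \Cref{lem1.0021}. The K\"unneth and affine fibration isomorphisms are purely target-side on $V_m \times V_n \cong Z_m \times Z_n$ and are therefore automatically compatible.

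The substantive content is matching the pushforwards $f_\star$ and $f''_\star$. Here the plan is to factor $f = \pi_1 \circ f_0$ and $f'' = \pi'_1 \circ f''_0$ as in diagram \eqref{fig:10.6}, apply \Cref{lem3.11} to write $f_\star = (\pi_1)_\star \circ \widetilde{\pi}_{1!}((f_0)_\star)$ and $f''_\star = (\pi'_1)_\star \circ \widetilde{\pi}'_{1!}((f''_0)_\star)$, and then invoke \Cref{cor1.06} on the two cubes of \eqref{fig:10.6}. \Cref{prop1.08} verifies the hypotheses of \Cref{cor1.06} for the right-hand cube, yielding a commutative square identifying $(\pi_1)_\star$ with $(\pi'_1)_\star$ under $\widetilde{\psi}'_{m,n}$; \Cref{prop1.09} does the same for the left-hand cube, identifying $(f_0)_\star$ with $(f''_0)_\star$ under $\widetilde{\varphi}'_{m,n}$ and $\widetilde{\psi}'_{m,n} \times \r{id}$. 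Pasting these two squares along $\widetilde{\psi}'_{m,n} \times \r{id}$ produces the desired equality of pushforwards.

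The hard part has already been handled in the proofs of \Cref{prop1.08} and especially \Cref{prop1.09}, where one must construct the isomorphism $\delta$ that forces the otherwise non-commutative top-left face of \eqref{fig:10.6} into a Cartesian square; this construction relies essentially on \Cref{lem3.411} to recognise $\tau''(\lambda)$ as a conjugate of the cut relation $p_0 q_0^{-1}$, which is precisely what allows one to rewrite the character-variety defining equation in terms of the Jacobi-algebra data. With these pieces assembled the equality of the two multiplications follows formally, and the fact that the resulting algebra isomorphism is independent of the choices of cut $E$ and maximal tree $T$ is automatic because the standard-presentation side of the comparison makes no reference to either.
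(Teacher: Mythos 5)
Your proposal is correct and follows essentially the same route as the paper: it reduces the comparison to the commutativity of the bridge diagram built from \eqref{fig:10.5}, treats the $q^\star$ and $\widetilde{h}^\star$ pullbacks and the K\"unneth/affine-fibration step as routine naturality checks (as the paper does), and handles the pushforwards exactly as in the paper by factoring $f=\pi_1\circ f_0$, $f''=\pi'_1\circ f''_0$, invoking \Cref{lem3.11}, and applying \Cref{cor1.06} to the two cubes of \eqref{fig:10.6} via \Cref{prop1.08} and \Cref{prop1.09}. The only cosmetic difference is that you phrase the pullback compatibility in terms of base change and Cartesian squares, whereas the paper only needs commutativity of those squares together with the composition law for adjunction units.
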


\begin{proof}
We use diagram \eqref{fig:10.5} to construct a bridge between the two multiplications. This gives us the following diagram on cohomology; where $p_m: Z_m/\r{GL}_m \rightarrow \r{pt}$, $p_n: Z_n/\r{GL}_n \rightarrow \r{pt}$, $p_{m+n}: Z_{m+n}/\r{GL}_{m+n} \rightarrow \r{pt}$ and $p: (Z_m \times Z_n)/(\r{GL}_m \times \r{GL}_n) \rightarrow \r{pt}$ are the respective structure morphisms, and we use an overline on a map to denote the respective induced map on the quotient stack.
\begin{equation}\label{fig:10.555}
\begin{tikzcd}[column sep = 10em, row sep = 7em]
\r{H}_{c, \r{GL}_{m+n}}\big(Z_{m+n}, \m Q\big) \arrow[r, "p_{m+n !} \eta^{\overline{\widetilde{\varphi}}'_{m+n}}(\m Q)", "\sim"'] \arrow[d, "q_Z^\star"] & \r{H}_{c, \r{GL}_{m+n}}\big(V_{m+n}, \m Q\big) \arrow[d, "q_V^\star"']\\
\r{H}_{c, \r{GL}_{m,n}}\big(Z_{m+n}, \m Q\big) \arrow[r, "p_{m+n !} \eta^{\overline{\widetilde{\varphi}}'_{m+n}}(\m Q)", "\sim"'] \arrow[d, "\overline{\widetilde{h}}^{\prime \star}"] & \r{H}_{c, \r{GL}_{m,n}}\big(V_{m+n}, \m Q\big) \arrow[d, "\overline{\widetilde{h}}^\star"']\\
\r{H}_{c, \r{GL}_{m,n}}\big(Z_{m,n}, \m Q\big) \arrow[r, "p_! \overline{\widetilde{f}}''_! \eta^{\overline{\widetilde{\varphi}}'_{m,n}}(\m Q)", "\sim"'] \arrow[d, "\overline{f}''_{\star}"] & \r{H}_{c, \r{GL}_{m,n}}\big(V_{m,n}, \m Q\big) \arrow[d, "\overline{f}_\star"']\\
\r{H}_{c, \r{GL}_{m,n}}\big(Z_m \times Z_n, \m Q\big) \arrow[r, "p_! \eta^{\overline{\widetilde{\varphi}}'_m \times \overline{\widetilde{\varphi}}'_n}(\m Q)", "\sim"'] \arrow[d, "\sim" sloped] & \r{H}_{c, \r{GL}_{m,n}}\big(V_m \times V_n, \m Q\big) \arrow[d, "\sim" sloped]\\
\r{H}_{c, \r{GL}_m}\big(Z_m, \m Q\big) \otimes \r{H}_{c, \r{GL}_n}\big(Z_n, \m Q\big) \arrow[r, "p_{m !} \eta^{\overline{\widetilde{\varphi}}'_m}(\m Q)\, \otimes\, p_{n !} \eta^{\overline{\widetilde{\varphi}}'_n}(\m Q)", "\sim"'] & \r{H}_{c, \r{GL}_m}\big(V_m, \m Q\big) \otimes \r{H}_{c, \r{GL}_n}\big(V_n, \m Q\big)
\end{tikzcd}
\end{equation}
Recalling that $\r{Rep}_n(\r{Jac}(\widetilde{Q},W,E)) \cong Z_n/\r{GL}_n$ and $\r{Rep}_n(\m C[\pi_1(\Sigma_g)]) \cong V_n/\r{GL}_n$, the dual of the left-hand side of \eqref{fig:10.555} exactly gives the multiplication on $\r{H}_c(\r{Rep}_n(\r{Jac}(\widetilde{Q},W,E)), \m Q)$ and the dual of the right-hand side gives the multiplication on $\r{H}_c(\r{Rep}_n(\pi_1(\Sigma_g)), \m Q)$, as described in Section 3.1. As $\overline{\widetilde{\varphi}'_n}$ is an isomorphism each of the horizontal arrows in \eqref{fig:10.555} are isomorphisms, hence to prove the two 2D CoHAs are isomorphic as algebras it suffices to show that \eqref{fig:10.555} commutes.

Commutativity of the bottom square follows from the naturality of isomorphisms involved. For the top square note we have the following commutative diagram of maps
\begin{equation}\label{fig:10.556}
\begin{tikzcd}[column sep = 7em, row sep = 7em]
V_{m+n}/\r{GL}_{m,n} \arrow[d, "q_V"] \arrow[r, "\overline{\widetilde{\varphi}}'_{m+n}", "\sim"'] & Z_{m+n}/\r{GL}_{m,n} \arrow[d, "q_Z"]\\
V_{m+n}/\r{GL}_{m+n} \arrow[r, "\overline{\widetilde{\varphi}}'_{m+n}", "\sim"'] & Z_{m+n}/\r{GL}_{m+n}
\end{tikzcd}
\end{equation}
Now for a general commutative square 
\[
\begin{tikzcd}[column sep = 6em, row sep = 5em]
X \arrow[d, "f'"] \arrow[r, "f"] & Y \arrow[d, "g"]\\
W \arrow[r, "g'"] & Z
\end{tikzcd}
\]
we have that the composition
$$\m Q_Z \xrightarrow{\eta^g(\m Q)} g_* g^* \m Q_Z \xrightarrow{g_*(\eta^f(g^* \m Q))} g_* f_* f^* g^* \m Q_Z$$
equals
$$\m Q_Z \xrightarrow{\eta^{g \circ f}(\m Q)} (g \circ f)_* (g \circ f)^* \m Q_Z = g_* f_* f^* g^* \m Q_Z$$
which equals
$$\m Q_Z \xrightarrow{\eta^{g' \circ f'}(\m Q)} (g' \circ f')_* (g' \circ f')^* \m Q_Z = g'_* f'_* f^{\prime *} g^{\prime *} \m Q_Z$$
which equals
$$\m Q_Z \xrightarrow{\eta^{g'}(\m Q)} g'_* g^{\prime *} \m Q_Z \xrightarrow{g'_*(\eta^{f'}(g^{\prime *} \m Q))} g'_* f'_* f^{\prime *} g^{\prime *} \m Q_Z$$
i.e.
\begin{align}
    g_*(\eta^f(g^* \m Q_Z)) \circ \eta^g(\m Q_Z) = g'_*(\eta^{f'}(g^{\prime *} \m Q_Z)) \circ \eta^{g'}(\m Q_Z). \label{eq:5.110}
\end{align}
Applying this to \eqref{fig:10.556} and recalling the definitions of the morphisms from Section 3.1
\begin{align*}
    q_V^\star &:= p_{m+n !} \overline{\widetilde{\varphi}}'_{m+n !}(\eta^{q_V}(\m Q_{V_{m+n}/\r{GL}_{m+n}}))\\
    q_Z^\star &:= p_{m+n !}(\eta^{q_Z}(\m Q_{Z_{m+n}/\r{GL}_{m+n}}))
\end{align*}
it follows that
\begin{align*}
    q_V^\star \circ p_{m+n !}(\eta^{\overline{\widetilde{\varphi}}'_{m+n}}(\m Q)) &= p_{m+n !} \overline{\widetilde{\varphi}}'_{m+n !}(\eta^{q_V}(\m Q) \circ p_{m+n !}(\eta^{\overline{\widetilde{\varphi}}'_{m+n}}(\m Q))\\
    &= p_{m+n !} \overline{\widetilde{\varphi}}'_{m+n !}(\eta^{q_V}(\overline{\widetilde{\varphi}}_{m+n}^{\prime *}\m Q) \circ p_{m+n !}(\eta^{\overline{\widetilde{\varphi}}'_{m+n}}(\m Q))\\
    &= p_{m+n !}(\eta^{\overline{\widetilde{\varphi}}'_{m+n} \circ q_V}(\m Q))\\
    &= p_{m+n !}(\eta^{q_Z \circ \overline{\widetilde{\varphi}}'_{m+n}}(\m Q))\\
    &= p_{m+n !} q_{Z !}(\eta^{\overline{\widetilde{\varphi}}'_{m+n}}(q_Z^* \m Q)) \circ p_{m+n !}(\eta^{q_Z}(\m Q))\\
    &=  p_{m+n !} q_{Z !}(\eta^{\overline{\widetilde{\varphi}}'_{m+n}}(\m Q)) \circ q_Z^\star.
\end{align*}
The middle two squares in diagram \eqref{fig:10.555} are induced by the respective morphisms of sheaves in $\r{D}(Z_{m+n})$, namely
\begin{equation}\label{fig:10.557}
\begin{tikzcd}[column sep = 6.5em, row sep = 8em]
\m Q_{Z_{m+n}} \arrow[r, "\eta^{\widetilde{\varphi}'_{m+n}}(\m Q)", "\sim"'] \arrow[d, "\widetilde{h}^{\prime \star}"] & \widetilde{\varphi}'_{m+n !} \m Q_{V_{m+n}} \arrow[d, "\widetilde{\varphi}'_{m+n !}(\widetilde{h}^\star)"]\\
\widetilde{h}'_! \m Q_{Z_{m,n}} \arrow[r, "\widetilde{h}'_! \eta^{\widetilde{\varphi}'_{m,n}}(\m Q)", "\sim"'] & \widetilde{h}'_! \widetilde{\varphi}'_{m,n !} \m Q_{V_{m,n}} = \widetilde{\varphi}'_{m+n !}  \widetilde{h}_!  \m Q_{V_{m,n}}
\end{tikzcd}
\end{equation}
and
\begin{equation}\label{fig:10.558}
\begin{tikzcd}[column sep = 7em, row sep = 9em]
\widetilde{f}''_! \m Q_{Z_{m,n}}[2d] \arrow[r, "\widetilde{f}''_! \eta^{\widetilde{\varphi}'_{m,n}}(\m Q{[2d]})", "\sim"'] \arrow[d, "f''_{\star}"] & \widetilde{f}''_! \widetilde{\varphi}'_{m,n !} \m Q_{V_{m,n}} = (\widetilde{\varphi}'_m \times \widetilde{\varphi}'_n)_! \widetilde{f}_!  \m Q_{V_{m,n}}[2d] \arrow[d, "(\widetilde{\varphi}'_m \times \widetilde{\varphi}'_n)_!(f_\star)"]\\
\m Q_{Z_m \times Z_n} \arrow[r, "\eta^{\widetilde{\varphi}'_m \times \widetilde{\varphi}'_n}(\m Q)", "\sim"'] & (\widetilde{\varphi}'_m \times \widetilde{\varphi}'_n)_! \m Q_{V_m \times V_n}
\end{tikzcd}
\end{equation}
hence showing \eqref{fig:10.557} and \eqref{fig:10.558} commute is enough. Commutativity of \eqref{fig:10.557} follows from the fact that the bottom-left square in diagram \eqref{fig:10.5} is commutative. Indeed recall the definition of the morphisms $\widetilde{h}^\star$ and $\widetilde{h}^{\prime \star}$ from \eqref{eq:1.0000045}
\begin{align*}
    \widetilde{h}^\star &= \eta^{\widetilde{h}}(\m Q_{V_{m+n}})\\
    \widetilde{h}^{\prime \star} &= \eta^{\widetilde{h}'}(\m Q_{Z_{m+n}})
\end{align*}
then applying \eqref{eq:5.110} we get that
\begin{align*}
    \widetilde{\varphi}'_{m+n !}(\widetilde{h}^\star) \circ \eta^{\widetilde{\varphi}'_{m+n}}(\m Q) &= \widetilde{\varphi}'_{m+n !}(\eta^{\widetilde{h}}(\m Q)) \circ \eta^{\widetilde{\varphi}'_{m+n}}(\m Q)\\
    &= \widetilde{\varphi}'_{m+n !}(\eta^{\widetilde{h}}(\widetilde{\varphi}_{m+n}^{\prime *}\m Q)) \circ \eta^{\widetilde{\varphi}'_{m+n}}(\m Q)\\
    &= \eta^{\widetilde{\varphi}'_{m+n} \circ \widetilde{h}}(\m Q)\\
    &= \eta^{\widetilde{h}' \circ \widetilde{\varphi}'_{m,n}}(\m Q)\\
    &= \widetilde{h}'_!(\eta^{\widetilde{\varphi}'_{m,n}}(\widetilde{h}'^* \m Q)) \circ \eta^{\widetilde{h}'}(\m Q)\\
    &= \widetilde{h}'_!(\eta^{\widetilde{\varphi}'_{m,n}}(\m Q)) \circ \widetilde{h}^{\prime \star}.
\end{align*}
It remains to show the commutativity of \eqref{fig:10.558}. Again recall the definition of the morphisms $f_\star$ and $f''_\star$ from \eqref{eq:1.000005}
\begin{align*}
    f_\star &= i^*(\nu^{f}(\m Q_{Y_{m,n}})) \circ \epsilon^{i,f}(f^!\m Q_{Y_{m,n}})^{-1}\\
    f''_\star &= i^{\prime *}(\nu^{f''}(\m Q_{Y'_{m,n}})) \circ \epsilon^{i',f''}(f^{\prime \prime !}\m Q_{Y'_{m,n}})^{-1}.
\end{align*}
Since $f= \pi_1 \circ f_0$ and $f''= \pi'_1 \circ f''_0$ we have that $f_\star = \pi_{1 \star} \circ \widetilde{\pi}_{1 !}(f_{0 \star})$ and $f''_\star= \pi'_{1 \star} \circ \widetilde{\pi}'_{1 !}(f''_{0 \star})$ from \Cref{lem3.11}. Hence we can use diagram \eqref{fig:10.6} to split \eqref{fig:10.558} into the following diagram
\begin{equation}\label{fig:1.02}
\begin{tikzcd}[column sep = 9em, row sep = 11em]
\widetilde{\pi}'_{1 !} \widetilde{f}''_{0 !} \m Q_{Z_{m,n}}[2d] \arrow[r, "\widetilde{\pi}'_{1 !} \widetilde{f}''_{0 !} \eta^{\widetilde{\varphi}'_{m,n}}(\m Q{[2d]})", "\sim"'] \arrow[d, "\widetilde{\pi}'_{1 !}(f''_{0 \star}{[2e]})"] & \widetilde{\pi}'_{1 !} \widetilde{f}''_{0 !} \widetilde{\varphi}'_{m,n !} \m Q_{V_{m,n}} = \widetilde{\psi}'_{m,n !} \widetilde{\pi}_{1 !} \widetilde{f}_{0 !}  \m Q_{V_{m,n}}[2d] \arrow[d, "\widetilde{\psi}'_{m,n !} \widetilde{\pi}_{1 !}(f_{0 \star}{[2e]})"]\\
\widetilde{\pi}'_{1 !} \m Q_{Z}[2e] \arrow[r, "\widetilde{\pi}'_{1 !} \eta^{\widetilde{\psi}'_{m,n} \times \r{id}}(\m Q{[2e]})", "\sim"'] \arrow[d, "\pi'_{1 \star}"] & \widetilde{\pi}'_{1 !} (\widetilde{\psi}'_{m,n} \times \r{id})_! \m Q_{V}[2e] = \widetilde{\psi}'_{m,n !} \widetilde{\pi}_{1 !} \m Q_{V}[2e] \arrow[d, "\widetilde{\psi}'_{m,n !}(\pi_{1 \star})"]\\
\m Q_{Z_m \times Z_n} \arrow[r, "\eta^{\widetilde{\psi}'_{m,n}}(\m Q)", "\sim"'] & \widetilde{\psi}'_{m,n !} \m Q_{V_m \times V_n}
\end{tikzcd}
\end{equation}
where $\r{dim}(\pi_1) = \r{dim}(\pi'_1) = e$ and $\r{dim}(f_0) = \r{dim}(f''_0) = d-e$, and recall $\widetilde{\psi}'_{m,n} =  \widetilde{\varphi}'_m \times \widetilde{\varphi}'_n$. Then \Cref{prop1.08} and \Cref{prop1.09} tell us that we can apply \Cref{cor1.06} to both cubes in diagram \eqref{fig:10.6}, which in turn tells us that the both squares in \eqref{fig:1.02} commute and hence \eqref{fig:10.558} commutes and therefore so does \eqref{fig:10.555}.
\end{proof}

\newpage
\bibliographystyle{plain}
\bibliography{references}

\begin{thebibliography}{10}

\bibitem{dav4}
B.~{Davison}.
\newblock Superpotential algebras and manifolds.
\newblock \textit{Advances in Mathematics}, 2012.
\newblock Vol. 231, Issue 2, 879-912.

\bibitem{dav1}
B.~{Davison}.
\newblock Cohomological {H}all algebras and character varieties.
\newblock \textit{International Journal of Mathematics}, 2016.
\newblock Vol. 27, No. 07, 1640003.

\bibitem{dav3}
B.~{Davison}.
\newblock The {C}ritical {C}o{H}{A} of a {Q}uiver {W}ith {P}otential.
\newblock \textit{The Quarterly Journal of Mathematics}, 2017.
\newblock Vol. 68, Issue 2, 635–703.

\bibitem{dm}
B.~{Davison} and S.~{Meinhardt}.
\newblock Cohomological {D}onaldson–{T}homas theory of a quiver with
  potential and quantum enveloping algebras.
\newblock \textit{Inventiones mathematicae}, 2020.
\newblock 221, pages 777–871.

\bibitem{ks}
M.~{Kashiwara} and P.~{Schapira}.
\newblock {\em Sheaves on {M}anifolds}.
\newblock Grundlehren der mathematischen Wissenschaften. \textit{Springer
  Berlin, Heidelberg}, 1st edition, 1990.

\bibitem{ll}
L.~{Louder}.
\newblock Nielsen equivalence in closed surface groups.
\newblock \url{https://arxiv.org/pdf/1009.0454.pdfs}, 2015.

\bibitem{ls}
R.~{Lyndon} and P.~{Schupp}.
\newblock {\em Combinatorial {G}roup {T}heory}.
\newblock Classics in Mathematics. \textit{Springer Berlin, Heidelberg}, 1st
  edition, 2001.

\bibitem{ms}
C.~P. {Milies} and S.~K. {Sehgal}.
\newblock {\em An {I}ntroduction to {G}roup {R}ings}.
\newblock Algebra and Applications. \textit{Springer Dordrecht}, 1st edition,
  2002.

\bibitem{rs}
J.~{Ren} and Y.~{Soibelman}.
\newblock Cohomological {H}all algebras, semicanonical bases and
  {D}onaldson-{T}homas invariants for 2-dimensional {C}alabi-{Y}au categories
  (with an appendix by {B}en {D}avison).
\newblock \textit{Algebra, Geometry, and Physics in the 21st Century}, 2017.
\newblock pages 216–293.

\bibitem{sv}
O.~{Schiffman} and E.~{Vasserot}.
\newblock Cherednik algebras, {W}-algebras and the equivariant cohomology of
  the moduli space of instantons on {$\m A^2$}.
\newblock \textit{Publications mathématiques de l'IHÉS}, 2013.
\newblock 118, pages 213–342.

\end{thebibliography}

\end{document}